\def\IR{{\mathbb R}}
\def\^#1{\if#1i{\accent"5E\i}\else{\accent"5E #1}\fi}
\def\"#1{\if#1i{\accent"7F\i}\else{\accent"7F #1}\fi}
\def\paragraph#1{{\mathbf #1\ }}
 \def\RR{\mbox{\rm I~\hspace{-1.15ex}R} }
 \def\CC{\rm \hbox{C\kern-.56em\raise.4ex
          \hbox{$\scriptscriptstyle |$}\kern+0.5 em }}
\def\CC{\mathrm \hbox{C\hspace{-1.em}\raisebox{.47ex}{
         \textrm{\mbox{$\scriptscriptstyle |$}}}\hspace{+0.8 em} }}
\def\CC{\hskip.2em\raisebox{.22em}{\makebox[0pt][c]{$\scriptscriptstyle
|$}} \hskip-0.25em\mathrm{C}}
\def\OO{\rm \hbox{O\kern-.34em\raise.47ex
         \hbox{$\scriptscriptstyle |$}\kern-.46em\raise.47ex
         \hbox{$\scriptscriptstyle |$}\kern+0.5 em }}
\def\RR{\mbox{\mathrm I\hspace{-0.50ex}R} }
\def\magnification{\afterassignment\m@g\count@}
\def\m@g{\mag\count@
  \hsize6.5truein\vsize8.9truein\dimen\footins8truein}
\def\hcboxcm#1#2{\hbox to #1{\hfill #2 \hfill}}
\def\null{\hbox{}}
\def\tn1{\overline n_1}
\def\tn2{\overline n_2}
\def\tn{\overline n }
\let\ds\displaystyle
\def\ba{\begin{array}}
\def\ea{\end{array}}
\def\be{\begin{equation}}
\def\ee{\end{equation}}
\def\bea{\begin{eqnarray}}
\def\eea{\end{eqnarray}}
\def\bean{\begin{eqnarray*}}
\def\eean{\end{eqnarray*}}
\def\RR{{\mathrm{ I~\hspace{-1.15ex}R}}}
\def\qquad{{\quad\quad}}
\def\={\, = \, }
 \def\today{\number\day\space \ifcase \month\or janvier \or f\'evrier
 \or mars \or avril \or mai \or juin \or juillet \or ao\^ut \or
 septembre \or octobre \or novembre \or d\'ecembre\fi
 \number\year}
\def\Box{\leavevmode\vbox{\hrule
     \hbox{\vrule\kern4pt\vbox{\kern4pt}%
           \vrule}\hrule}}
\def\blackbox{\leavevmode\vrule height 5pt width 4pt depth 0pt\relax}
\newcounter{appendix}
\newcounter{sectionz}
\def\appendix{\advance\c@appendix by 1
\def\thesectionz {\Alph{appendix}}
\def\thesection{Appendix \Alph{appendix}:}
\ifnum\c@appendix=1 \setcounter{section}{-1} \fi
\@startsection {section}{1}{\z@}{-3.5ex plus -1ex minus
  -.2ex}{2.3ex plus .2ex}{\large\mathbf}}
\newtheorem{lemme}{Lemma}[section]
\newtheorem{lemma}[lemme]{Lemma}
\newtheorem{theorem}[lemme]{Theorem}
\newtheorem{corollary}[lemme]{Corollary}
\newtheorem{definition}[lemme]{Definition}
\newtheorem{proposition}[lemme]{Proposition}
\newtheorem{remark}[lemme]{Remark}
\def\deblem{\begin{lemme}\it }
\def\finlem{\end{lemme}}
\def\debthm{\begin{theorem}\it }
\def\finthm{\end{theorem}}
\def\debprop{\begin{proposition} \it}
\def\finprop{\end{proposition}}
\def\debcor{\begin{corollary}\it }
\def\fincor{\end{corollary}}
\def\debdef{\begin{definition}\it}
\def\findef{\end{definition}}
\def\debrem{\begin{remark}\em}
\def\finrem{\null\hfill\blackbox\end{remark}}
\def\phi{\phi^\varepsilon}
\def\rho{\varrho}
\def\ph-Poisson systemialpha{\phi^{\alpha}}
\def\baromega{\overline{\omega}}
\begin{document}

\title{ \sc Influence of the competition in the spatial dynamics of a population of \textit{Aedes} mosquitoes }

\author{ {\small{\sc 
 SAMIA BEN ALI \thanks{Department of Mathematics, Faculty of Sciences of Tunis, University of Tunis El-Manar,  2092 El-Manar, Tunisia (samia.benali@fst.utm.tn)}
 \& MOHAMED LAZHAR TAYEB \thanks{Department of Mathematics, Faculty of Sciences of Tunis, University of Tunis El-Manar,  2092 El-Manar, Tunisia (mohamedlazhar.tayeb@fst.utm.tn)}
 \& NICOLAS VAUCHELET\thanks{Universit\'e Sorbonne Paris Nord, Laboratoire Analyse, G\'eom\'etrie et Applications, LAGA, CNRS UMR 7539,
F-93430, Villetaneuse, France (vauchelet@math.univ-paris13.fr) }}}}
\date{}
\maketitle 

\begin{abstract}
In this article, we investigate a competitive reaction-diffusion system modelling the interaction between several species of mosquitoes.
In particular, it has been observed that in tropical regions,
\textit{Aedes} aegypti mosquitoes are well established in urban area whereas \textit{Aedes} albopictus mosquitoes spread widely in forest region. 
The aim of this paper is to propose a simple mathematical system modeling this segregation phenomenon. 
More precisely, after modeling the dynamics by a competitive reaction-diffusion system with spatial heterogeneity, we prove that when there is a strong competition between these two species of mosquitoes, solutions to this system converge in long time to segregated stationary solutions.
Then we study the influence of this strong competition on the success of a population replacement strategy using \textit{Wolbachia} bacteria.
Our theoretical results are also illustrated by some numerical simulations.
\end{abstract}
\vskip1cm

\noindent
{\sc\small Key words:} population dynamics; reaction–diffusion system; stationary solutions.
\thispagestyle{empty}

\noindent{\sc\small AMS Subject Classification:}  35B25, 35B27, 35B45, 35B51, 54C70, 82B21.


\section{Introduction}

Aedes mosquitoes are one of the most invasive species of mosquitoes \cite{Benedict}, they are also responsible of the transmission of several diseases, among them dengue, yellow fever, chikungunya, zika.
Originally from tropical regions in Asia, they are now well-established in many countries, including temperated regions in Europa \cite{ECDC}, America \cite{USA,SA}, Africa \cite{Africa}, which raises fears of an outbreak in cases of the above-mentioned diseases.
Among \textit{Aedes} mosquitoes, the most common are \textit{Aedes albopictus} and \textit{Aedes aegypti}. 
It is well established that inter-specific competition exists between \textit{Aedes aegypti} and \textit{Aedes albopictus} \cite{Competition1}.
Moreover it has been observed that the geographical distribution of these two species of mosquitoes may be quite different~: \textit{Aedes aegypti} are mainly present in urban area such as city centre whereas \textit{Aedes albopictus} are most widespread in suburb area or in forested area, see e.g. \cite{GeoDistrib, Congo}.


The aim of this work is to propose and study a mathematical model of the competitive interaction between these two species that highlights their different habitats. 
A mathematical model of the competition between \textit{Aedes aegypti} and \textit{Aedes albopictus} has been recently proposed in \cite{Competition} to study the invasion of these species. To do so the authors introduce and study an advection-reaction-diffusion system with free boundaries.
In our paper, our objective is different, we want to provide a mathematical model for which we observe a segregation between the two species with respect to their habitats as it has been observed in the field, as mentioned above. 
From a mathematical point of view, segregation has been proved in reaction-diffusion models with strong competition, we refer in particular to the pioneer works \cite{Dancer1,Dancer2}.
The strong competition asymptotic allows to reduce the dimensionality of the system and to obtain explicit relation to guarantee invasion, see e.g. \cite{Girardin,GirardinReview}.
We will follow this approach and consider a reaction-diffusion system with strong competition in one space dimension.
More precisely, let us denote $n_1(t,x)$ and $n_2(t,x)$, with $t>0$ and $x\in \IR$, the densities of two species in competition. We denote by $b_1$, resp. $b_2$, the intrinsic birth rate of species $1$, resp. species $2$, and $d_1$, resp. $d_2$, the death rate of species $1$, resp. species $2$. The carrying capacities are denoted respectively by $K_1$ and $K_2$ and are assumed to depend on the spatial variable $x$ to emphasize the different habitats. The competition parameter is denoted $c$. Thus, the model reads, on $(0,+\infty)\times \RR$,
\begin{subequations}\label{syst12}
\begin{equation}\label{syst12:eq1}
\partial_t n_1 - D\partial_{xx} n_1 = b_1 n_1\left(1-\dfrac{n_1}{K_1(x)}\right) - c n_1 n_2 - d_1 n_1,
\end{equation}
\begin{equation}\label{syst12:eq2}
\partial_t n_2 - D\partial_{xx} n_2 = b_2 n_2\left(1-\dfrac{n_2}{K_2(x)}\right) - c n_1 n_2 - d_2 n_2.
\end{equation}
\end{subequations}
Here $D$ stands for the diffusion coefficient assumed to be constant.
This system is complemented with some initial data $n_1(t=0)=n_1^0$, $n_2(t=0,x)=n_2^0$.
Up to a scaling in space, we may assume that the diffusion coefficient is $D=1$. Then in the rest of the paper, we will always consider that $D=1$.

Many mathematical works have been devoted to the study of propagation phenomena in reaction-diffusion equations with space-dependent coefficients in reaction term, in particular with periodic environment, see e.g. \cite{SKT, BHN, Nadin} and references therein, and for more general non-homogeneous environment see \cite{BerestyckiNadin}.
In this paper, we do not consider a periodic environment but we consider the case of two separated regions, one modelling an urban area and another one modelling a wild domain. 
We first show that assuming that the competition is strong ($c\gg 1$), system \eqref{syst12} may be reduce to a simple scalar equation (see Theorem \ref{6.1} below). Then, we analyze the resulting equation and obtain a careful description of the long time behavior of its solutions. In particular, we get an explicit condition on the coefficients for which the two species are installed in two distinct geographical regions (see Theorem \ref{BB} below).
Therefore this model allows to recover the different spatial distribution of the two species as observed in the field.

Then, a second aim of this work is to study the influence of this competition in the replacement strategy using the endosymbiotic bacterium \textit{Wolbachia}.
Indeed it has been observed that, when carrying this bacterium, \textit{Aedes} mosquitoes are not able to transmit viruses like dengue, zika, chikungunya. Moreover, this bacterium is naturally transmitted from mother to off-springs and is characterized by a so-called cytoplasmic incompatibility, i.e. The mating between a male carrying \textit{Wolbachia} and a wild female will not produce viable eggs \cite{Curtis}. 
Then, massive releases of \textit{Wolbachia}-infected mosquitoes in the field are considered as a possible method to replace wild mosquitoes and prevent dengue epidemics \cite{MR5139942} (see also \cite{WMP}).

From a mathematical point of view, spatial propagation of this replacement technique has been studied using some reaction-diffusion systems, see e.g. \cite{BartonTurelli, Fenton, ChanKim, Hughes, Strugarek, Zubelli}. Inspired by the model in \cite{Strugarek}, we consider the following system modelling the competition of a species represented by its density $n_1$ with another species which is divided into two densities denoted $n_2$ and $n_3$, $n_2$ representing the wild mosquitoes whereas $n_3$ being the density of mosquitoes carrying \textit{Wolbachia}.  
The model reads, for $t>0$, and $x\in\IR$~:
\begin{align}\label{B-samia}
 \left\lbrace
\begin{array}{lcl}
\displaystyle \partial_t n_1 - \partial_{xx} n_1 = b_{1} n_{1}\left(1- \frac{n_{1}}{K_{1}(x)}\right) - c n_{1} (n_{2}+n_{3}) -d_{1} n_{1}, \\[4mm]
\displaystyle \partial_t n_2 - \partial_{xx} n_2 = b_{2} n_{2} \frac{n_{2}}{n_{2}+ n_{3}} \left(1-\frac{n_{2}+ n_{3} }{K_2(x)}\right)- c n_{1} n_{2}-d_{2} n_{2}, \\[4mm]
\displaystyle \partial_t n_3 - \partial_{xx} n_3 = b_{3} n_{3} \left(1-\frac{n_{2}+ n_{3} }{K_2(x)}\right)- c n_{3} n_{1}-d_{3} n_{3}, \\[4mm]
n_1(0,x) = n_1^0(x), \quad n_2(0,x) = n_2^0(x), \quad n_3(0,x) = n_3^0(x).
 \end{array}
\right. 
\end{align}
As for the above system, the parameters $\left(b_{i}, d_{i}\right),$ for $i \in\{1,2,3\}$ represent, respectively, the intrinsic birthrates and mortality, and $K_{i}$ for $i \in\{1,2\}$ denotes the environmental carrying capacity.
The factor $\frac{n_2}{n_2+n_3}$ represents the probability to mate with a partner not carrying \textit{Wolbachia}, indeed due to the cytoplasmic incompatibility only mating between uninfected mosquitoes give birth to uninfected mosquitoes.
Finally, $c$ is a competition parameter.
Then we propose a formal analysis of this system to study the success of the replacement strategy in case of a strong competition between several species of mosquitoes.

The outline of this paper is the following. 
In the next section, we first consider briefly the homogeneous environment and study the equilibria of system \eqref{syst12} and their stability. Section \ref{sect3} is devoted to the analysis of system \ref{syst12} in the heterogeneous environment, more precisely we assume that the carrying capacities $K_1$ and $K_2$ take two different values to model the two different region $\{x<0\}$ and $\{x>0\}$. We first reduce the system in the strong competition setting. Then we study the large time asymptotics of the solutions of the reduced system and provide some explicit conditions on the coefficients to guarantee a stable spatial segregation (see Theorem \ref{BB}) or to prevent one speices from invading another (see Theorem \ref{TH2}). These theoretical results are illustrated by numerical simulations which are presented in section \ref{sec:num}.
Section \ref{sec:Wol} is devoted to the investigation of system \ref{B-samia} where a species carrying Wolbachia is introduced. We first provide an analysis in the homogeneous environment of the equilibria and their stability. Then, we carry out a formal analysis of this system in the heterogeneous environment in the strong competition setting. Finally, this formal analysis is illustrated by some numerical simulations in section \ref{sec:Wolnum}.

\section{Homogeneous environment~: Equilibria and stability}

In this Section, we begin by considering the homogeneous case, then the carrying capacities $K_1$ and $K_2$ are constants and we assume also that the initial data are non-negative and do not depend on the space variable $x$. Then, we may assume that $n_1$ and $n_2$ does not depend on the space variable also and consider the simplified version of system \eqref{syst12} 
\begin{equation}\label{EDO12}
    \left\{
    \begin{array}{l}
        \displaystyle n_1' = b_1 n_1 \left(1-\frac{n_1}{K_1}\right) - c n_1 n_2 - d_1 n_1,  \\
        \displaystyle n_2' = b_2 n_2 \left(1-\frac{n_2}{K_2}\right) - c n_1 n_2 - d_2 n_2.
    \end{array}
    \right.    
\end{equation}
This system is complemented by non-negative initial data. 
By a direct application of the Cauchy-Lipschitz theorem, there exists a unique global solution of the Cauchy problem related to system \eqref{EDO12}. Moreover this solution is non-negative.
Notice that if $n_2=0$, then the species 1 is viable if $b_1>d_1$. Therefore, we will always assume
\begin{equation}
    \label{hyp:bd}
    b_1>d_1, \qquad b_2>d_2.
\end{equation}
The Jacobian associated to the right-hand side of the system \eqref{EDO12} reads
$$
\begin{aligned}
&\operatorname{Jac}(n_1,n_2)=\left(\begin{array}{cc}
b_1 \left(1-\frac{2 n_1}{K_1}\right)- c n_2-d_1  & -c n_1 \\ \\
- c n_2 & b_2 \left(1-\frac{2 n_2}{K_2}\right)- c n_1-d_2 
\end{array}\right)
\\
\end{aligned}
$$
It is readily seen that the extradiagonal terms are non positive. By Kamke-Muller conditions (see $\cite{MR6139941}$ ), this implies that the system is monotone with respect to the cone $\mathbb{R}_{+} \times \mathbb{R}_{-}$; in other words, it is competitive.

Then, the following Lemma provides the equilibria and their stability~:
\begin{lemma} \label{11}
Let us assume that \eqref{hyp:bd} holds.
Then system \eqref{EDO12} admits the three following non-negative steady states:
$$ 
(0,0),\ (n_{1}^{*},0), \ (0, n_{2}^{*}) ,\quad with \quad  n_{i}^{*} = K_i  \left(1 - \frac{d_i}{b_i} \right), i \in \lbrace 1,2 \rbrace . $$ 
Moreover, if 
\begin{align}\label{1022}
 c > \max\left\{\frac{b_1}{K_1} \left( \frac{b_2-d_2}{b_1-d_1}\right), \frac{b_2}{K_2} \left( \frac{b_1-d_1}{b_2-d_2}\right) \right\},
\end{align}
then there is another distinct non-negative steady state $(\overline{n_1}, \overline{n_2})$ where
\begin{align}\label{nbar}
\overline{n_1} =  \frac{c(b_2-d_2)- \frac{b_2}{K_2}(b_1-d_1)}{c^2 - \frac{b_1 b_2}{K_1 K_2}}, \qquad 
\overline{n_2} =  \frac{c(b_1-d_1)- \frac{b_1}{K_1}(b_2-d_2)}{c^2 - \frac{b_1 b_2}{K_1 K_2}}.
\end{align}
In this case, we have $(0, 0)$ and $(\overline{n_1}, \overline{n_2})$ are (locally linearly) unstable, whereas $(n_{1}^{*},0)$ and $(0, n_{2}^{*})$ are locally asymptotically stable.
\end{lemma}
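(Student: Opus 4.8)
The plan is a textbook existence-plus-linearization argument, carried out in three stages.

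\textbf{Equilibria.} First I would set the right-hand sides of \eqref{EDO12} to zero and factor: $n_1'=0$ forces either $n_1=0$ or $\frac{b_1}{K_1}n_1+cn_2=b_1-d_1$, and $n_2'=0$ forces either $n_2=0$ or $cn_1+\frac{b_2}{K_2}n_2=b_2-d_2$. The four resulting cases give $(0,0)$; $(n_1^*,0)$ with $n_1^*=K_1(1-d_1/b_1)\ge 0$ by \eqref{hyp:bd}; $(0,n_2^*)$ symmetrically; and the coexistence state obtained by solving the linear system
\[
\frac{b_1}{K_1}\,n_1+c\,n_2=b_1-d_1,\qquad c\,n_1+\frac{b_2}{K_2}\,n_2=b_2-d_2,
\]
whose determinant is $\frac{b_1b_2}{K_1K_2}-c^2$; Cramer's rule then yields exactly the expressions \eqref{nbar}. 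This case analysis is exhaustive, so these are all the non-negative steady states.

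\textbf{Positivity under \eqref{1022}.} The key bookkeeping observation is that multiplying the two lower bounds in \eqref{1022} gives $c^2>\frac{b_1b_2}{K_1K_2}$, so the common denominator $c^2-\frac{b_1b_2}{K_1K_2}$ of $(\overline{n_1},\overline{n_2})$ is positive; moreover the numerator of $\overline{n_1}$ is positive exactly because $c>\frac{b_2}{K_2}\frac{b_1-d_1}{b_2-d_2}$ and that of $\overline{n_2}$ because $c>\frac{b_1}{K_1}\frac{b_2-d_2}{b_1-d_1}$. Hence both coordinates of $(\overline{n_1},\overline{n_2})$ are strictly positive, so this state is admissible and distinct from the three boundary equilibria.

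\textbf{Stability.} I would evaluate the Jacobian displayed in the excerpt at each equilibrium. At $(0,0)$ it is $\mathrm{diag}(b_1-d_1,b_2-d_2)$, with two positive eigenvalues by \eqref{hyp:bd}: unstable. At $(n_1^*,0)$, using the identity $b_1(1-n_1^*/K_1)-d_1=0$ to reduce the $(1,1)$ entry to $-b_1n_1^*/K_1$, the matrix is upper triangular with eigenvalues $-b_1n_1^*/K_1<0$ and $b_2-d_2-cn_1^*=(b_2-d_2)-\frac{cK_1}{b_1}(b_1-d_1)$, the latter being negative precisely under the first bound in \eqref{1022}; hence $(n_1^*,0)$ is locally asymptotically stable, and $(0,n_2^*)$ likewise by symmetry. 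Finally, using both equilibrium relations at $(\overline{n_1},\overline{n_2})$, the diagonal entries reduce to $-b_1\overline{n_1}/K_1$ and $-b_2\overline{n_2}/K_2$, so the Jacobian is
\[
\begin{pmatrix} -\dfrac{b_1\overline{n_1}}{K_1} & -c\,\overline{n_1}\\[2mm] -c\,\overline{n_2} & -\dfrac{b_2\overline{n_2}}{K_2}\end{pmatrix},
\]
with determinant $\overline{n_1}\,\overline{n_2}\bigl(\frac{b_1b_2}{K_1K_2}-c^2\bigr)<0$ by the second stage; two real eigenvalues of opposite sign means a saddle, hence unstable.

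There is no genuine obstacle here; the only points demanding care are the bookkeeping one noted above — that the product of the two thresholds in \eqref{1022} equals $\frac{b_1b_2}{K_1K_2}$, which simultaneously secures positivity of $(\overline{n_1},\overline{n_2})$ and fixes the sign of the coexistence Jacobian's determinant — and the systematic use of the equilibrium identities $b_i(1-\overline{n_i}/K_i)-\dots=0$ to simplify the diagonal Jacobian entries before reading off the eigenvalues.
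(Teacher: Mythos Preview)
Your argument is correct and follows the same linearization strategy as the paper. The one noteworthy difference is at the coexistence equilibrium: the paper substitutes the explicit formulas \eqref{nbar} into the Jacobian entries and grinds out $a_{11}$, $a_{22}$ and the determinant over several lines, whereas you use the equilibrium relations $b_i(1-\overline{n_i}/K_i)-c\overline{n_j}-d_i=0$ to reduce the diagonal entries to $-b_i\overline{n_i}/K_i$ first, so the determinant factors immediately as $\overline{n_1}\,\overline{n_2}\bigl(\tfrac{b_1b_2}{K_1K_2}-c^2\bigr)$. This is cleaner and also makes transparent why the same inequality $c^2>\tfrac{b_1b_2}{K_1K_2}$ that ensures positivity of $(\overline{n_1},\overline{n_2})$ simultaneously forces the saddle structure; the paper arrives at the same factored determinant but only after the long substitution.
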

\begin{proof}
Equilibria may be computed directly by solving the following system
$$
\left\{
   \begin{array}{l}
       \displaystyle 0 = b_1 \overline{n_1} \left(1-\frac{\overline{n_1}}{K_1}\right) - c \overline{n_1}\, \overline{n_2} - d_1 \overline{n_1},  \\
       \displaystyle 0 = b_2 \overline{n_2} \left(1-\frac{\overline{n_2}}{K_2}\right) - c \overline{n_1}\, \overline{n_2} - d_2 \overline{n_2}.
   \end{array}
\right.
$$
Under the condition \eqref{1022}, we find the four steady states solutions mentionned in the statement of the Lemma.
Then, computing the Jacobian at the equilibria we get
$$
\begin{aligned}
&\operatorname{Jac}\left(0, 0\right)=\left(\begin{array}{cc}
b_1-d_1 & 0 \\ \\
0 &  b_2-d_2
\end{array}\right).
\end{aligned}
$$
Clearly, when $b_1>d_1$ and $b_2>d_2$, the extinction equilibrium is unstable.
We have also
$$
\begin{aligned}
&\operatorname{Jac}\left(n_1^*, 0\right)=\left(\begin{array}{cc}
-\left(b_1-d_1\right) & -c n_{1}^{*}\\ \\
0 & b_2 - d_2 - c K_1 \left(1 - \frac{d_1}{b_1}\right)
\end{array}\right)
\end{aligned}$$
and
$$
\begin{aligned}
&\operatorname{Jac}\left(0, n_2^*\right)=\left(\begin{array}{cc}
b_1 - d_1 - c K_2 \left(1 - \frac{d_2}{b_2}\right) & 0 \\ \\
- c n_{2}^{*} & -\left(b_2-d_2\right)
\end{array}\right).
\end{aligned}
$$
The linear stability of $\left(n_1^*, 0\right)$ and $\left(0, n_2^*\right)$ follows from assumption \eqref{1022}.
Finally, we compute
$$
\begin{aligned}
&\operatorname{Jac}(\overline{n_1}, \overline{n_2})
=\left(\begin{array}{cc}
b_1 \left(1-\frac{2 \overline{n_1}}{K_1}\right)- c \overline{n_2}-d_1  & -c \overline{n_1} \\ \\
- c \overline{n_2} & b_2 \left(1-\frac{2 \overline{n_2}}{K_2}\right)- c \overline{n_1}-d_2 
\end{array}\right)
=\left(\begin{array}{cc}
a_{11}  & a_{12}  \\ \\
a_{21}  & a_{22}  
\end{array}\right)
\\
\end{aligned}
$$
with $\overline{n_1}$ and $\overline{n_2}$ defined in \eqref{nbar}. 
Then, we can deduce that 
$$
\frac{2 b_1}{K_1} \overline{n_1} + c \overline{n_2} =\ds{\frac{1}{\Delta} \lbrace 2 c \frac{b_1}{K_1} (b_2 - d_2) - 2 \frac{b_1 b_2}{K_1 K_2}(b_1 - d_1) + c^2 (b_1 - d_1) - c \frac{b_1}{K_1}(b_2 - d_2) \rbrace}, 
$$
where $\Delta = c^2 - \frac{b_1 b_2}{K_1 K_2}$,
and we calculate $a_{11}$, we get
\begin{align*}
a_{11} = &  \frac{1}{\Delta}\bigg(\Delta (b_1 - d_1) - 2 c \frac{b_1}{K_1} (b_2 - d_2) + 2 \frac{b_1 b_2}{K_1 K_2}(b_1 - d_1)-  c^2 (b_1 - d_1) +  c \frac{b_1}{K_1}(b_2 - d_2)\bigg) \nonumber\\
= & \frac{1}{\Delta} \bigg( c^2 (b_1 - d_1) - \frac{b_1 b_2}{K_1 K_2}(b_1 - d_1) - 2 c \frac{b_1}{K_1} (b_2 - d_2) - c^2 (b_1 - d_1) + c \frac{b_1}{K_1} (b_2 - d_2)\bigg) \nonumber \\
= &  \frac{1}{\Delta} \frac{b_1}{K_1} \bigg( \frac{ b_2}{ K_2} (b_1 - d_1) - c (b_2 - d_2) \bigg).
\end{align*}
Similarly, we obtained
\begin{align*}
a_{22} = \frac{1}{\Delta} \frac{b_2}{K_2}\bigg( \frac{b_1}{ K_1} (b_2 - d_2) - c  (b_1 - d_1) \bigg).
\end{align*}
Then,
\begin{align*}
\operatorname{det}\left(\operatorname{Jac}(\overline{n_1}, \overline{n_2})\right) = &   \frac{b_1 b_2}{K_1 K_2} \frac{1}{\Delta^2}  \bigg( \frac{b_2}{K_2} (b_1 - d_1) - c (b_2 - d_2)\bigg)  \bigg( \frac{b_1}{K_1} (b_2 - d_2) - c (b_1 - d_1)\bigg) \nonumber \\
  - & \frac{c^2}{\Delta^2} \bigg(c (b_2 - d_2) - \frac{b_2}{K_2} (b_1 - d_1) \bigg)  \bigg(c (b_1 - d_1) - \frac{b_1}{K_1} (b_2 - d_2) \bigg)\nonumber \\
 = & \frac{1}{\Delta^2} \bigg( c (b_2 - d_2) - \frac{b_2}{K_2} (b_1 - d_1) \bigg)  \bigg( c (b_1 - d_1) - \frac{b_1}{K_1} (b_2 - d_2) \bigg) 
  \bigg(  \frac{b_1 b_2}{K_1 K_2} - c^2 \bigg). 
\end{align*}
 Then  according to the condition \eqref{1022}, which clearly implies $c^2 > \frac{b_1 b_2}{K_1 K_2}$, we have $\operatorname{det}\left(\operatorname{Jac}(\overline{n_1}, \overline{n_2})\right) < 0$. It implies the unstability of $(\overline{n_1}, \overline{n_2})$.
\end{proof}

\section{ Analysis in a heterogeneous spatial environment }\label{sect3}

In this section, we consider that there are heterogeneities in the environment; therefore the carrying capacity depends on the space variable. Moreover, to be able to have a precise description of the dynamics, we first reduce the system by assuming that the competition is strong.

\subsection{Reduction in a strong competition setting}\label{sec:reduction}

Let us assume that the carrying capacities verify, for $i=1,2$,
\begin{equation}
    \label{hyp:K1K2}
     K_i \in L^\infty(\RR), \qquad \frac{1}{K_i} \in L^\infty(\RR) \cap BV(\RR).
\end{equation}
 In order to simplify the considered system, we place ourselves in a strong competition setting.
  More precisely, we consider that the competition parameter $c$ is large, we fix it to be $c=\frac{1}{\varepsilon}$ with a parameter $0<\varepsilon\ll 1$ and we investigate the asymptotic $\varepsilon\to 0$. 
 The system reads for $t>0$, and $x \in \mathbb{R}$, 
\begin{align}\label{Da-ssamia}
 \left\lbrace 
\begin{array}{lcl}
\partial_t n_{1,\varepsilon} - \partial_{xx} n_{1,\varepsilon} = b_1 n_{1,\varepsilon} \left(1-\dfrac{n_{1,\varepsilon}}{K_1(x)}\right)- \dfrac{1}{\varepsilon} n_{1,\varepsilon} n_{2,\varepsilon} - d_1 n_{1,\varepsilon}, \\
\\
\partial_t n_{2,\varepsilon} - \partial_{xx} n_{2,\varepsilon} = b_2 n_{2,\varepsilon} \left(1-\dfrac{n_{2,\varepsilon}}{K_2(x)}\right)- \dfrac{1}{\varepsilon} n_{1,\varepsilon} n_{2,\varepsilon} - d_2 n_{2,\varepsilon}.
 \end{array}
\right. 
\end{align}
This system is complemented by initial data
$$
n_{1,\varepsilon}(t=0,x)=n_{1,\varepsilon}^0(x),\quad
n_{2,\varepsilon}(t=0,x) = n^0_{2,\varepsilon}(x).
$$
We assume that these initial data are nonnegative, continuous on $\RR$ and uniformly bounded in $W^{1,1}(\mathbb{R})\cap L^\infty(\mathbb{R})$~:
\begin{align} \label{54}
\left\|n_{1,\varepsilon}^0\right\|_{L^1\left(\mathbb{R}\right)}+\left\|n_{1,\varepsilon}^0\right\|_{L^{\infty}\left(\mathbb{R}\right)}+\left\|n_{2,\varepsilon}^0\right\|_{L^1 \left(\mathbb{R}\right)}+\left\|n_{2,\varepsilon}^0\right\|_{L^{\infty}\left(\mathbb{R}\right)} \leq C^0,
\end{align}
\begin{align} \label{53}
\left\|\partial_x n_{1,\varepsilon}^0\right\|_{L^1\left(\mathbb{R} \right)}+\left\|\partial_x n_{2,\varepsilon}^0\right\|_{L^1\left(\mathbb{R} \right)} \leq C^1. 
\end{align}
From these assumptions, we may extract a subsequence still denoted by ($n_{1,\varepsilon}^0, n_{2,\varepsilon}^0)$ of initial data that convergences strongly in $L^1(\mathbb{R}) \times L^1(\mathbb{R})$. We assume moreover that the limiting initial data are segregated, i.e.
\begin{equation}\label{assumpini}
n_{1,\varepsilon}^0 \rightarrow n_1^0, \quad n_{2,\varepsilon}^0 \rightarrow n_2^0 \quad \text {  in } L^1\left(\mathbb{R}\right), \text{ and }
n_1^0 n_2^0 = 0 .
\end{equation}
Following the ideas in \cite{Dancer1,Girardin,Perthame}, we obtain the following result~:
\begin{theorem}\label{6.1}
Let us assume that \eqref{hyp:K1K2} holds. Let us consider system \eqref{Da-ssamia} with assumptions \eqref{54}-\eqref{53} on the initial data. Then, as $\varepsilon\to 0$, we have
$$
n_{1,\varepsilon} \rightarrow n_1, \quad n_{2,\varepsilon} \rightarrow n_2 \quad \text { strongly in } L_{\mathrm{loc}}^1\left(\mathbb{R}^{+}, L^1(\mathbb{R})\right),
$$
where $n_1, n_2 \in L^{\infty}\left(\mathbb{R}^{+} ; L^1 \cap L^{\infty}\left(\mathbb{R} \right)\right)$,  $n_1 n_2(t, x)=0$  a.e. and $\omega =n_1-n_2$ solves 
\begin{equation}\label{eq:w}
\partial_t \omega - \partial_{xx} \omega = b_1 \omega_+ \left(1-\dfrac{\omega_+}{K_1(x)}\right)-d_1 \omega_+ - b_2 \omega_-\left(1-\dfrac{\omega_-}{K_2(x)}\right) + d_2 \omega_-,
\end{equation}
where we use the standard notations for the positive and negative parts, i.e. $u_+ = \max\{0,u\}$, and $u_- = \max\{0,-u\}$.
This equation is complemented with initial data $\omega^0(x)= n_{1}^0(x) -  n_{2}^0(x).$
\end{theorem}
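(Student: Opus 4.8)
The plan is to adapt the classical strong-competition limit of \cite{Dancer1} (as used in \cite{Girardin,Perthame}): the stiff term $\frac{1}{\varepsilon}n_{1,\varepsilon}n_{2,\varepsilon}$ forces, in the limit, the segregation $n_1 n_2=0$; it cancels in the equation for the difference $w_\varepsilon:=n_{1,\varepsilon}-n_{2,\varepsilon}$, so that $w_\varepsilon$ stays compact, and the individual limits are then recovered as $n_1=w_+$, $n_2=w_-$. Throughout we use that, for fixed $\varepsilon$, system \eqref{Da-ssamia} has a unique nonnegative solution.

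\emph{A priori estimates.} Since $-\frac{1}{\varepsilon}n_{1,\varepsilon}n_{2,\varepsilon}\le 0$, each nonnegative $n_{i,\varepsilon}$ is a subsolution of the logistic equation $\partial_t u-\partial_{xx}u=b_i u(1-u/K_i)-d_i u$; comparison with the constant supersolution $\max(\|n_{i,\varepsilon}^0\|_{L^\infty},\|K_i\|_{L^\infty})$ yields an $L^\infty$ bound uniform in $\varepsilon$ and $t$. Integrating \eqref{Da-ssamia} in $x$, using $1/K_i\in L^\infty$, a Gronwall argument controls $\|n_{i,\varepsilon}(t)\|_{L^1(\IR)}$ on any $[0,T]$, and a further integration in time, isolating the stiff term, gives $\frac{1}{\varepsilon}\int_0^T\!\!\int_\IR n_{1,\varepsilon}n_{2,\varepsilon}\,dx\,dt\le C(T)$, hence $n_{1,\varepsilon}n_{2,\varepsilon}\to 0$ in $L^1((0,T)\times\IR)$. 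Next, differentiating the two equations in $x$ and testing the equations for $\partial_x n_{1,\varepsilon}$ and $\partial_x n_{2,\varepsilon}$ against the corresponding sign functions, one checks that the stiff contributions to $\frac{d}{dt}\bigl(\|\partial_x n_{1,\varepsilon}\|_{L^1}+\|\partial_x n_{2,\varepsilon}\|_{L^1}\bigr)$ cancel, the remaining source terms being controlled by the $L^\infty$ bound and by $1/K_i\in L^\infty\cap BV$ (the contribution $n_{i,\varepsilon}^2\,\partial_x(1/K_i)$ being a bounded measure); with \eqref{53} this gives a uniform bound for $n_{i,\varepsilon}$ in $L^\infty(0,T;BV(\IR))$. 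Finally, comparing $n_{i,\varepsilon}$ with $e^{(b_i-d_i)t}$ times the heat evolution of $n_{i,\varepsilon}^0$ and using the tightness of the $L^1$-convergent family $\{n_{i,\varepsilon}^0\}$, one obtains tightness of $\{n_{i,\varepsilon}(t):\varepsilon>0,\ t\le T\}$ in $L^1(\IR)$.

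\emph{Passage to the limit.} The $BV$-in-$x$ bound together with the time regularity provided by the equations (an Aubin--Lions type compactness argument) gives, along a subsequence, $n_{1,\varepsilon}\to n_1$ and $n_{2,\varepsilon}\to n_2$ strongly in $L^1_{\mathrm{loc}}((0,\infty)\times\IR)$, upgraded to $L^1_{\mathrm{loc}}(\IR^+;L^1(\IR))$ by the tightness above; passing to the limit in $n_{1,\varepsilon}n_{2,\varepsilon}\to 0$ yields $n_1 n_2=0$ a.e. As $n_1,n_2\ge 0$ and $n_1 n_2=0$, one has $n_1=\omega_+$ and $n_2=\omega_-$ with $\omega:=n_1-n_2=\lim(n_{1,\varepsilon}-n_{2,\varepsilon})$. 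Subtracting the two equations of \eqref{Da-ssamia}, the stiff terms disappear:
$$ \partial_t w_\varepsilon-\partial_{xx}w_\varepsilon = b_1 n_{1,\varepsilon}\Bigl(1-\tfrac{n_{1,\varepsilon}}{K_1}\Bigr)-d_1 n_{1,\varepsilon}
-b_2 n_{2,\varepsilon}\Bigl(1-\tfrac{n_{2,\varepsilon}}{K_2}\Bigr)+d_2 n_{2,\varepsilon}; $$
by the strong $L^1_{\mathrm{loc}}$ convergence, the uniform $L^\infty$ bound and $1/K_i\in L^\infty$, the right-hand side converges in $L^1_{\mathrm{loc}}$ to the right-hand side of \eqref{eq:w} (with $n_{1,\varepsilon}\to\omega_+$, $n_{2,\varepsilon}\to\omega_-$), and passing to the limit in the distributional formulation gives \eqref{eq:w}. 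The initial condition follows from $n_{i,\varepsilon}^0\to n_i^0$ in $L^1$, and $n_1,n_2\in L^\infty(\IR^+;L^1\cap L^\infty(\IR))$ by weak-$\ast$ lower semicontinuity of the bounds already obtained.

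\emph{Main obstacle.} The crux is to obtain \emph{strong} compactness of the individual densities (weak-$\ast$ compactness being immediate from the $L^\infty$ bound), which is exactly what the two cancellations of the $\frac{1}{\varepsilon}$-terms --- in the $BV$ estimate and in the difference equation --- are meant to deliver; the secondary technical points are the sign computation making the $BV$-level cancellation rigorous and the uniform-in-space (tightness) control needed to reach convergence in $L^1(\IR)$ rather than only $L^1_{\mathrm{loc}}(\IR)$, which is precisely where hypotheses \eqref{53} and $1/K_i\in BV$ enter.
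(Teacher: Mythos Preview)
Your proof is correct and follows essentially the same route as the paper: uniform $L^\infty$, $L^1$ and $BV_x$ estimates (with the same sign-cancellation for the stiff term in the $BV$ bound), Aubin--Lions compactness plus a tightness argument, and passage to the limit in the difference equation. Your tightness step (comparison with $e^{(b_i-d_i)t}$ times the heat semigroup acting on the tight family of initial data) differs in presentation from the paper's direct cutoff computation, but both achieve the same control of the mass at infinity.

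One point you should complete: you only obtain convergence \emph{along a subsequence}, whereas the theorem asserts convergence of the whole family as $\varepsilon\to 0$. The paper closes this by observing that the limiting equation \eqref{eq:w} has a unique (weak) solution for given $\omega^0$, since the reaction term is globally Lipschitz in $\omega$ (the maps $\omega\mapsto\omega_\pm$ being $1$-Lipschitz); hence every subsequential limit coincides and the full family converges. Add this uniqueness remark to finish.
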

\begin{proof}
We split the proof into several steps.

\textbf{ Step 1: Uniform a priori estimates  } \\
First, it is classical to show that the solutions to system \eqref{Da-ssamia} are nonnegative, when initial data are nonnegative. Moreover, by parabolic regularity, we have $n_{1,\varepsilon}$ and $n_{2,\varepsilon}$ belongs to $L^\infty(\IR^+,C^1(\IR))$. Then, applying the maximum principle, we obtain the $L^\infty$-bounds (uniform with respect to $\varepsilon$)
\begin{align}
0 \leq n_{1,\varepsilon}(t, x) \leq \max\left\{\left\|n_{1,\varepsilon}^0\right\|_{L^{\infty}\left(\mathbb{R}\right)}, \left\|K_1\right\|_{L^{\infty}\left(\mathbb{R}\right)}\right\}, \quad 0 \leq n_{2,\varepsilon}(t, x) \leq \max\left\{\left\|n_{2,\varepsilon}^0\right\|_{L^{\infty}\left(\mathbb{R}\right)}, \left\|K_2\right\|_{L^{\infty}\left(\mathbb{R}\right)}\right\}.
\end{align}
Integrating the system \eqref{Da-ssamia} we get the $L^1$ estimate
\begin{align} \label{59}
 \left\{\begin{array}{l}
\ds\int_{\mathbb{R}} n_{1,\varepsilon}(t, x) d x+ \int_0^t \int_{\mathbb{R}} \frac{n_{1,\varepsilon}(s, x) n_{2,\varepsilon}(s, x)}{\varepsilon} \,dx ds  \\[3mm]
\qquad +  \ds \int_0^t \int_{\mathbb{R}} \big( \frac{n_{1,\varepsilon}(s, x)^2 }{K_1(x)} + (d_1-b_1)n_{1,\varepsilon} \big)\,dxds 
\leq  \ds\int_{\mathbb{R}} n_{1,\varepsilon}^0(x) d x, \\[4mm] 
\ds\int_{\mathbb{R}} n_{2,\varepsilon}(t, x) d x+\int_0^t \int_{\mathbb{R}} \frac{n_{1,\varepsilon}(s, x) n_{2,\varepsilon}(s, x)}{\varepsilon} dx ds \\[3mm]
\qquad + \ds \int_0^t \int_{\mathbb{R}} \big( \frac{n_{2,\varepsilon}(s, x)^2 }{K_2(x)} + (d_2-b_2)n_{2,\varepsilon} \big) \,dxds 
 \leq \int_{\mathbb{R}} n_{2,\varepsilon}^0(x) \,dx .
\end{array}\right.
\end{align}
Next, we now prove the  estimates in $x$. We work on the equations of \eqref{Da-ssamia} and
differentiate them with respect to $x$. We multiply by the sign function and we sum and integrate to obtain
\begin{align*}
& \frac{d}{d t} \int_{\mathbb{R}}\left[\left|\frac{\partial}{\partial x} n_{1,\varepsilon}\right|+\left|\frac{\partial}{\partial x} n_{2,\varepsilon}\right|\right] \,dx   \\
 \quad & \leq  - \frac{1}{\varepsilon}\int_{\mathbb{R}}\left[\frac{\partial}{\partial x} n_{1,\varepsilon} n_{2,\varepsilon}+ n_{1,\varepsilon} \frac{\partial}{\partial x} n_{2,\varepsilon}\right]\left[\operatorname{sgn}\left(\frac{\partial}{\partial x} n_{1,\varepsilon}\right)+\operatorname{sgn}\left(\frac{\partial}{\partial x} n_{2,\varepsilon}\right)\right]\,dx \\
&\quad +  C\int_{\mathbb{R}} \left( \left|\frac{\partial}{\partial x} n_{1,\varepsilon}\right| +   \left|\frac{\partial}{\partial x} n_{2,\varepsilon}\right| +  \left|\frac{\partial}{\partial x} \frac{n_{1,\varepsilon}(s, x)^2 }{K_1(x)} \right | + \left|\frac{\partial}{\partial x} \frac{n_{2,\varepsilon}(s, x)^2 }{K_2(x)} \right | \right).
\end{align*}
Notice that due to the regularity of $n_{1,\varepsilon}$ and $n_{2,\varepsilon}$, all terms in the above computations are well-defined. Then, from standard computations, we have
\begin{align*}
& \left[\frac{\partial}{\partial x} n_{1,\varepsilon} n_{2,\varepsilon}+ n_{1,\varepsilon} \frac{\partial}{\partial x} n_{2,\varepsilon}\right]\left[\operatorname{sgn}\left(\frac{\partial}{\partial x} n_{1,\varepsilon}\right)+\operatorname{sgn}\left(\frac{\partial}{\partial x} n_{2,\varepsilon}\right)\right]  \\
& =  \left(\left|\frac{\partial}{\partial x} n_{1,\varepsilon}\right| + \frac{\partial}{\partial x} n_{1,\varepsilon} \operatorname{sgn}\left(\frac{\partial}{\partial x} n_{2,\varepsilon}\right)\right) n_{2,\varepsilon}
+\left(\left|\frac{\partial}{\partial x} n_{2,\varepsilon}\right|  
 +\frac{\partial}{\partial x} n_{2,\varepsilon} \operatorname{sgn}\left(\frac{\partial}{\partial x} n_{1,\varepsilon}\right)\right) n_{1,\varepsilon} \\
& \geq  0.
\end{align*}
Using assumption \eqref{hyp:K1K2}, the regularity, and the $L^1\cap L^\infty$ uniform estimate on $n_{1,\varepsilon}$ and $n_{2,\varepsilon}$, we deduce the following estimate
$$
\frac{d}{d t} \int_{\mathbb{R}}\left[\left|\frac{\partial}{\partial x} n_{1,\varepsilon}\right|+\left|\frac{\partial}{\partial x} n_{2,\varepsilon}\right|\right] \,dx \leq C_1 \int_{\mathbb{R}}\left[\left|\frac{\partial}{\partial x} n_{1,\varepsilon}\right|+\left|\frac{\partial}{\partial x} n_{2,\varepsilon}\right|\right]\,dx + C_2. 
$$
We conclude by a Gronwall argument.

\textbf{ Step 2: Compactness   } \\
 It remains to show the strong
convergence of $n_{1,\varepsilon}$ and $n_{2,\varepsilon}$. This follows from the a priori estimates, which imply
local compactness, and the time compactness may be obtained from the Lions-Aubin lemma \cite{Simon}. 
More precisely, rewriting the system \eqref{Da-ssamia} as
\begin{align}\label{729}
\frac{\partial}{\partial t} n_{1,\varepsilon}=\partial_{xx}n_{1,\varepsilon}+ f^1_{\varepsilon}(t, x), \quad \frac{\partial}{\partial t} n_{2,\varepsilon}=\partial_{xx}n_{2,\varepsilon} + f^2_{\varepsilon}(t, x).
\end{align}
From above estimates, the quantities $f^1_{\varepsilon}(t,x)$, $f^2_{\varepsilon}(t,x)$ are uniformly bounded in $L_{t, x}^1$.
Moreover, we obtaine the uniform bound
$$
\left\|\partial_x n_{1,\varepsilon}(t)\right\|_{L^1\left(\mathbb{R}\right)}+\left\|\partial_x  n_{2,\varepsilon}(t)\right\|_{L^1\left(\mathbb{R}\right)} \leq C.
$$
We deduce that $\partial_t n_{1,\varepsilon}$ and $\partial_t n_{2,\varepsilon}$ are uniformly bounded in $L^1_{loc}((\mathbb{R}^+), W^{-1,1}(\mathbb{R}))$.
Finally, according to Lions-Aubin lemma \cite{Simon}, we get that the sequences
$(n_{1,\varepsilon})_\varepsilon$ and $(n_{2,\varepsilon})_\varepsilon$ are relatively compact in $L^1_{\text{loc}} \left(\mathbb{R}^{+} ; L^1_{\text{loc}} \left(\mathbb{R}\right)\right).$ 

To pass from local convergence in space to global convergence, we need to prove that the mass in the tail $|x|>R$ is uniformly small with respect to $\varepsilon$ when $R$ is large. Let us introduce the cutoff function $\xi\in C^\infty(\IR)$ such that $\xi(x) = 0$ for $|x|<1$ and $\xi(x)=1$ for $|x|\geq 2$ and $0\leq \xi\leq 1$. Then we deduce from \eqref{Da-ssamia}
\begin{align*}
\frac{d}{dt} \int_{\IR} n_{1,\varepsilon}(t,x)\,\xi\left(\frac{x}{R}\right)\,dx 
& \leq \int_\IR \partial_{xx}n_{1,\varepsilon}(t,x) \, \xi\left(\frac{x}{R}\right) + b_1 \int_\IR n_{1,\varepsilon}(t,x) \, \xi\left(\frac{x}{R}\right)\,dx  \\
& \leq \frac{1}{R^2} \int_\IR n_{1,\varepsilon}(t,x) \, \xi''\left(\frac{x}{R}\right)\,dx 
    + b_1 \int_{\IR} n_{1,\varepsilon}(t,x) \, \xi\left(\frac{x}{R}\right)\,dx.
\end{align*}
Then, after an integration in time, for any $t\in[0,T]$, $T>0$, we have
\begin{align*}
\int_{\IR} n_{1,\varepsilon}(t,x)\,\xi\left(\frac{x}{R}\right)\,dx 
& \leq e^{b_1 t} \left(\int_{\IR} n_{1,\varepsilon}^0(x) \, \xi\left(\frac{x}{R}\right)\,dx + \frac{CT}{R^2}\right) \\
& \leq e^{b_1 t} \left( \|n_{1,\varepsilon}^0- n_1^0\|_{L^1(\IR)} + \int_\IR n_{1}^0(x)\, \xi\left(\frac{x}{R}\right)\,dx + \frac{CT}{R^2} \right).
\end{align*}
The latter term  beening uniformly small for $\varepsilon$ small enough and $R$ large enough, it implies the control on the tail. 
We proceed in the same way for $(n_{2,\varepsilon})_\varepsilon$.

\textbf{ Step 3: Passing to the limit.} \\
From the compactness result, we may extract subsequences, still denoted $n_{1,\varepsilon}$ and $n_{2,\varepsilon}$, which converge strongly in $L^1_{\text{loc}} \left(\mathbb{R}^{+} ; L^1 \left(\mathbb{R}\right)\right)$ and almost everywhere to limits denoted by 
$$n_1, n_2 \in L^1\cap L^{\infty}\left(\mathbb{R}^{+} \times \mathbb{R}\right).$$
Passing to the limit into \eqref{59}, we deduce that 
$$
\int_0^t \int_{\IR} n_1(s,x) n_2(s,x) \,dxds = 0.
$$
It implies the segregation property $n_1 n_2 = 0$ a.e. and for $t=0$ thanks to assumption \eqref{assumpini}.
Next, we define
$$
\omega_{\varepsilon}=n_{1,\varepsilon}-n_{2,\varepsilon}
\underset{\varepsilon \rightarrow 0}{\longrightarrow} \omega=n_1-n_2, \quad and \quad \omega^0=n_1^0-n_2^0. 
$$
From the segregation property, we deduce that
$$
\omega_+  = n_1, \qquad \text{ and } \quad \omega_- = n_2.
$$
Then, subtracting the two equations in \eqref{Da-ssamia} and multiplying by a test function $\varphi(t,x)$, we obtain after an integration
\begin{align*}
  & -\int_0^T \int_{\mathbb{R}} \omega^{\varepsilon}(t,x) \partial_t \varphi(t,x) dt dx + \int_{\mathbb{R}}  \omega^{\varepsilon}(0,x) \varphi(0,x) dx  - \int_0^T \int_{\mathbb{R}} \omega^{\varepsilon}(t,x)  \partial_{xx} \varphi(t,x) \,dxdt \\ 
  & = \int_0^T \int_{\mathbb{R}} \left(b_1 n_{1,\varepsilon} \left(1-\dfrac{n_{1,\varepsilon}}{K_1(x)}\right) - d_1 n_{1,\varepsilon} - b_2 n_{2,\varepsilon} \left(1-\dfrac{n_{2,\varepsilon}}{K_2(x)}\right) + d_2 n_{2,\varepsilon} \right)\varphi(t,x) \,dxdt.  
\end{align*}
Letting $\varepsilon$ going to $0$ we get 
\begin{align*}
  & -\int_0^T \int_{\mathbb{R}} \omega(t,x) \partial_t \varphi(t,x) dt dx + \int_{\mathbb{R}} \omega(0,x)   \varphi(0,x) dx- \int_0^T \int_{\mathbb{R}} \omega(t,x) \partial_{xx} \varphi(t,x) \,dxdt \\ 
  & = \int_0^T \int_{\mathbb{R}} \left(b_1 \omega_+ \left(1-\dfrac{\omega_+}{K_1(x)}\right) - d_1 \omega_+ - b_2 \omega_- \left(1-\dfrac{\omega_-}{K_2(x)}\right) + d_2 \omega_- \right) \varphi(t,x) \,dxdt.
\end{align*}
We recover equation \eqref{eq:w}.

Finally, we claim that the whole sequence is converging since the limit equation \eqref{eq:w} admits a unique solution. Indeed, uniqueness is a classical consequence of the fact that the right hand side of \eqref{eq:w} is locally Lipschitz with respect to $\omega$ since the positive part and negative part are Lipschitz function (noticing that we may write $\omega^+=\frac{1}{2}(|\omega|+\omega)$ and $\omega^- = \frac 12(|\omega|-\omega)$).
\end{proof}


\begin{remark}\label{rem:bistability}
It is important to notice that, when the carrying capacities are constant, the function defining the right hand side of the limiting reaction-diffusion equation \eqref{eq:w} is bistable. More precisely, denoting 
\begin{align}\label{ABA}
f(\omega) = b_1 \omega_+ \left(1-\frac{\omega_+}{K_1}\right) -d_1 \omega_+ - b_2 \omega_- \left(1-\frac{\omega_- }{K_2}\right)  + d_2 \omega_-,
\end{align}
we have that $f(K_1(1-\frac{d_1}{b_1})) = f(0) = f(-K_2(1-\frac{d_2}{b_2}))=0$, $f<0$ on $(-K_2(1-\frac{d_2}{b_2}),0)$, and $f>0$ on $(0,K_1(1-\frac{d_1}{b_1}))$.
As a consequence, a well-known result for such bistable reaction-diffusion equation (see e.g. \cite{Perthame} and references therein) is that in the homogeneous setting (i.e. when $K_1$ and $K_2$ are constants), there exists a traveling wave whose velocity sign is given by the sign of the quantity 
$$
\gamma := \int_{-K_2(1-\frac{d_2}{b_2})}^{K_1(1-\frac{d_1}{b_1})} f(\omega)\,d\omega = F\left(K_1(1-\frac{d_1}{b_1})\right) - F\left(-K_2(1-\frac{d_2}{b_2})\right),
$$
where $F$ denotes an antiderivative of $f$ : If $\gamma>0$ the species $1$ is invasive, whereas if $\gamma<0$ the species $2$ is invasive.
\end{remark}

\subsection{Large time behaviour of the solutions of the reduced equation}
We have seen in Theorem \ref{6.1} that in the strong competition setting, the study of system \eqref{Da-ssamia} may be reduced to the study of a scalar problem on $\mathbb{R}$ \eqref{eq:w}.
Then, in this part, we consider this simplified equation and we study stationary solutions for this problem. We first introduce the following notations for the reaction term~:
\begin{align}\label{ccc}
 f(x,\omega)=  b_1 \omega_+ \left(1-\frac{\omega_+}{K_1(x)}\right) -d_1 \omega_+ - b_2 \omega_- \left(1-\frac{\omega_- }{K_2(x)}\right)  + d_2 \omega_-, 
\end{align}
where 
$K_1(x)= K_1^F 1_{\{x<0 \}}  + K_1^U 1_{\{x>0 \}}$ and $ K_2(x)= K_2^F 1_{\{x<0 \}}  + K_2^U 1_{\{x>0 \}}$.
As above, we will use the notations $f(x,\omega)= f^U(\omega )$ for $x>0$, and $f(x,\omega) = f^F(\omega)$ for $x<0$. We denote the antiderivatives of $f^U$ and $f^F$ respectively by $F^U$ and $F^F$, i.e. $(F^U)'=f^U$ and $(F^F)'=f^F$.

Then, we consider the dynamical equation
\begin{equation}
    \label{eq:omegat}
    \partial_t \omega - \partial_{xx} \omega = f(x,\omega),
\end{equation}
with $f$ defined in \eqref{ccc}. We complement this equation with an initial data $\omega(t=0,x) = \omega^0(x)$. 

Moreover, we assume that the species 1 is invasive in the region $\{x<0\}$, whereas the species 2 is invasive in the region $\{x>0\}$. Due to Remark \ref{rem:bistability}, it implies that we make the following assumptions~:

\begin{align}\label{BBbb-ssamia}
 \left\lbrace
\begin{array}{lcl}
\displaystyle\gamma^{F} := F^F\left(K_1^F \left(1- \frac{d_1}{b_1}\right) \right) - F^F \left(-K_2^F \left(1- \frac{d_2}{b_2}\right) \right) >0, \\[4mm]
\displaystyle\gamma^{U} :=  F^U \left(K_1^U \left(1- \frac{d_1}{b_1}\right) \right)- F^U \left(-K_2^U \left(1- \frac{d_2}{b_2}\right) \right)<0.
\end{array}
\right. 
\end{align}

As a consequence, the antiderivative $F^F$ admits a global maximum in $K_1^F(1-\frac{d_1}{b_1})$ and the antiderivative $F^U$ admits a global maximum in $-K_2^U(1-\frac{d_2}{b_2})$, see Figures \ref{fig:FF} and \ref{fig:FU}.

\begin{figure}[ht]
\begin{center}
\begin{tikzpicture}[scale=0.75]
    \draw[->] (-3.7,0) -- (4,0) node[right] {$\omega$};
    \draw[->] (0,-2) -- (0,2.5);
    \draw[color=blue,domain=0:3.7] plot (\x,{2*\x*(1-\x/3.5)-0.2*\x});
    \draw[color=blue,domain=-3.5:0]   plot (\x,{1.5*\x*(1+\x/4)-0.5*\x});   
    \draw[color=blue] (2,2) node {$f^F(\omega)$};
    \draw (0,-0.1) node[below right] {$0$};
    \draw (2.5,-0.1) node[below] {$K_1^F(1-\frac{d_1}{b_1})$};
    \draw (-3,-0.1) node[below] {$-K_2^F(1-\frac{d_2}{b_2})$};
    \draw (-2.7,0) node {$|$};
    \draw (3.1,0) node {$|$};
\end{tikzpicture} 
\begin{tikzpicture}[scale=0.75]
    \draw[->] (-3.7,0) -- (4,0) node[right] {$\omega$};
    \draw[->] (0,-2) -- (0,2.5);
    \draw[color=blue,domain=0:3.7] plot (\x,{0.9*\x*\x -2/3*\x*\x*\x/3.5})  node[right] {$F^F(\omega)$};
    \draw[color=blue,domain=-3.5:0]   plot (\x,{\x*\x/2+\x*\x*\x/8}) ;
    \draw[-,dashed,color=blue] (-2.7,1.2) -- (1.35,1.2) -- (1.35,-0.1) node[below] {$\theta^F$};
    \draw (0,-0.1) node[below right] {$0$};
    \draw (2.9,-0.1) node[below] {$K_1^F(1-\frac{d_1}{b_1})$};
    \draw (-3,-0.1) node[below] {$-K_2^F(1-\frac{d_2}{b_2})$};
    \draw (-2.7,0) node {$|$};
    \draw (3.1,0) node {$|$};
\end{tikzpicture} 
\caption{Schematic representations of the functions $f^F$ and $F^F$ when $\gamma^F>0$.}\label{fig:FF}
\end{center}
\end{figure}
\begin{figure}[ht]
\begin{center}
\begin{tikzpicture}[scale=0.75]
    \draw[->] (-4,0) -- (3.7,0) node[right] {$\omega$};
    \draw[->] (0,-2) -- (0,2.5);
    \draw[color=blue,domain=-3.7:0] plot (\x,{2*\x*(1+\x/3.5)-0.2*\x});
    \draw[color=blue,domain=0:3.5]   plot (\x,{1.5*\x*(1-\x/4)-0.5*\x});   
    \draw[color=blue] (2,1.1) node {$f^U(\omega)$};
    \draw (0,-0.1) node[below right] {$0$};
    \draw (2.5,-0.1) node[below] {$K_1^U(1-\frac{d_1}{b_1})$};
    \draw (-3.2,-0.1) node[below] {$-K_2^U(1-\frac{d_2}{b_2})$};
    \draw (-3.2,0) node {$|$};
    \draw (2.7,0) node {$|$};
\end{tikzpicture} 
\begin{tikzpicture}[scale=0.75]
    \draw[->] (-4,0) -- (3.7,0) node[right] {$\omega$};
    \draw[->] (0,-2) -- (0,2.5);
    \draw[color=blue,domain=-3.7:0] plot (\x,{0.9*\x*\x +2/3*\x*\x*\x/3.5});
    \draw[color=blue,domain=0:3.5]  plot (\x,{\x*\x/2-\x*\x*\x/8})   node[right] {$F^U(\omega)$};
    \draw[-,dashed,color=blue] (2.7,1.2) -- (-1.35,1.2) -- (-1.35,-0.1) node[below] {$\theta^U$};
    \draw (0,-0.1) node[below right] {$0$};
    \draw (2.5,-0.1) node[below] {$K_1^U(1-\frac{d_1}{b_1})$};
    \draw (-3.2,-0.1) node[below] {$-K_2^U(1-\frac{d_2}{b_2})$};
    \draw (-3.2,0) node {$|$};
    \draw (2.7,0) node {$|$};
\end{tikzpicture} 
\caption{Schematic representations of the functions $f^U$ and $F^U$ when $\gamma^U<0$.}\label{fig:FU}
\end{center}
\end{figure}
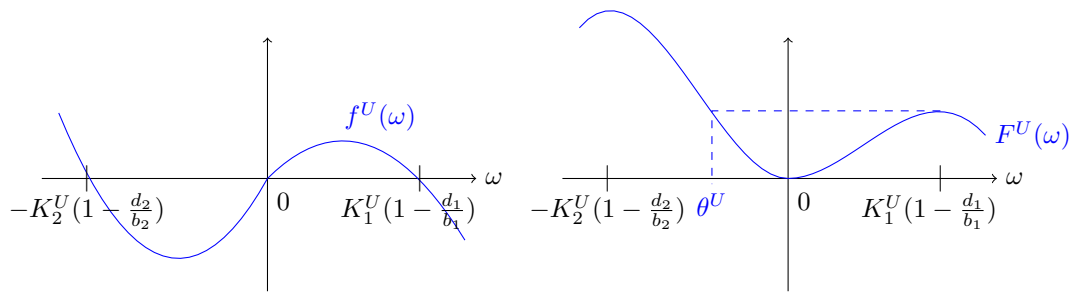

Before stating our main result, we introduce   the 
 so-called \textit{critical bubbles} (see e.g. \cite{BartonTurelli,Zubelli})~: 
\begin{lemma}
    \label{lem:bubbleF}
    Assume \eqref{hyp:bd} holds.
    Let us assume that $\gamma^F>0$, and denote $\theta^F$ the unique real in $(0,K_1^F(1-\frac{d_1}{b_1}))$ such that $F^F(\theta^F) = F^F \left(-K_2^F \left(1- \frac{d_2}{b_2}\right) \right)$ (see Figure \ref{fig:FF}).
    Then, for any $\alpha\in (\theta^F,K_1^F\left(1-\frac{d_1}{b_1}\right))$, there exists a family of \textit{bubbles}, denoted $\chi^{0,F}_\alpha$, such that $\chi_\alpha^{0,F}$ is even, nonincreasing on $(0,+\infty)$, and
    $$
    \begin{cases}
        &-(\chi_\alpha^{0,F})'' = f^F(\chi_\alpha^{0,F}) \qquad \text{ on } (-L_\alpha^F,L_\alpha^F), \\[3mm]
        &\chi_\alpha^{0,F}(0) = \alpha, \quad \chi_\alpha^{0,F} (\pm L_\alpha^F) = -\max(K_2^U,K_2^F)\left(1-\dfrac{d_2}{b_2}\right), \\[3mm]
        &\chi_\alpha^{0,F}(x) = -\max(K_2^F,K_2^U)\left(1-\dfrac{d_2}{b_2}\right) \quad \text{ on } \IR \setminus (-L_\alpha^F,L_\alpha^F),
    \end{cases}
    $$
    where
    $$
    L_\alpha^F := \int_{-\max(K_2^F,K_2^U)(1-\frac{d_2}{b_2})}^\alpha \frac{dz}{\sqrt{2(F^F(\alpha)-F^F(z))}}. 
    $$
\end{lemma}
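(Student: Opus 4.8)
The plan is to build $\chi^{0,F}_\alpha$ by the classical phase-plane (quadrature) construction of a critical bubble for a bistable nonlinearity, applied to the autonomous equation $u''+f^F(u)=0$. Since $f^F$ is continuous and piecewise quadratic, hence Lipschitz on bounded sets, the Cauchy problem with data $u(0)=\alpha$, $u'(0)=0$ has a unique maximal solution, and the energy $E(x):=\frac12(u'(x))^2+F^F(u(x))$ is conserved, so $\frac12(u'(x))^2=F^F(\alpha)-F^F(u(x))$ along the orbit. Everything then reduces to controlling the sign of $F^F(\alpha)-F^F(z)$.

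First I would record the shape of $F^F$ that follows from the bistability of $f^F$ (Remark~\ref{rem:bistability} with $K_i=K_i^F$): $F^F$ is strictly increasing on $(0,K_1^F(1-\frac{d_1}{b_1}))$, strictly decreasing on $(-K_2^F(1-\frac{d_2}{b_2}),0)$, and strictly increasing on $(-\infty,-K_2^F(1-\frac{d_2}{b_2}))$, with local maxima at the two equilibria. Combined with $\gamma^F>0$ and the intermediate value theorem, this yields existence and, by strict monotonicity, uniqueness of $\theta^F\in(0,K_1^F(1-\frac{d_1}{b_1}))$ with $F^F(\theta^F)=F^F(-K_2^F(1-\frac{d_2}{b_2}))$. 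Then, for every $\alpha\in(\theta^F,K_1^F(1-\frac{d_1}{b_1}))$, I would show the strict inequality
\[
F^F(\alpha)>F^F(z)\qquad\text{for all }z\in\Big[-\max(K_2^F,K_2^U)\big(1-\tfrac{d_2}{b_2}\big),\ \alpha\Big),
\]
\emph{and} at the left endpoint, by splitting the range: on $(0,\alpha)$ use strict monotonicity of $F^F$ together with $\alpha<K_1^F(1-\frac{d_1}{b_1})$; on $(-K_2^F(1-\frac{d_2}{b_2}),0]$ use $F^F\le F^F(-K_2^F(1-\frac{d_2}{b_2}))=F^F(\theta^F)<F^F(\alpha)$; and, when $K_2^U>K_2^F$, on $z<-K_2^F(1-\frac{d_2}{b_2})$ use that $F^F$ is increasing there, hence $F^F(z)<F^F(\theta^F)$.

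Next I would run the shooting. Since $f^F(\alpha)>0$, we have $u''(0)=-f^F(\alpha)<0$, so $u$ leaves $\alpha$ strictly decreasing; by the energy identity $u'$ stays strictly negative as long as $F^F(u)<F^F(\alpha)$, i.e. as long as $u>-\max(K_2^F,K_2^U)(1-\frac{d_2}{b_2})$, and the same identity bounds $u$ and $u'$ so the solution survives until the first point $L_\alpha^F$ at which $u$ reaches that level. On $(0,L_\alpha^F)$ one has $u'=-\sqrt{2(F^F(\alpha)-F^F(u))}$, and separating variables gives exactly the stated formula for $L_\alpha^F$; it is finite because the integrand is continuous on the half-open interval, stays bounded near the left endpoint by the strict inequality above, and is integrable near $z=\alpha$ since $F^F(\alpha)-F^F(z)\sim f^F(\alpha)(\alpha-z)$ with $f^F(\alpha)>0$. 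Finally one sets $\chi^{0,F}_\alpha:=u$ on $[0,L_\alpha^F]$, extends it evenly to $[-L_\alpha^F,L_\alpha^F]$ (a $C^2$ extension, since $u'(0)=0$ and $u$ solves the autonomous equation), and extends it by the constant $-\max(K_2^F,K_2^U)(1-\frac{d_2}{b_2})$ on $\IR\setminus(-L_\alpha^F,L_\alpha^F)$; evenness, monotonicity on $(0,+\infty)$, the profile equation on $(-L_\alpha^F,L_\alpha^F)$, and the value and constancy conditions are then read off directly from the construction.

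The only genuinely delicate point is the comparison analysis of $F^F(\alpha)-F^F(z)$ over the whole integration range — in particular when the orbit crosses the negative equilibrium $-K_2^F(1-\frac{d_2}{b_2})$, i.e. the case $K_2^U>K_2^F$ — because this single estimate simultaneously forces the trajectory to reach the prescribed level, keeps it monotone, and makes $L_\alpha^F$ finite; the rest is standard quadrature bookkeeping.
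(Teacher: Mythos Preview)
Your proposal is correct and follows essentially the same quadrature/phase-plane construction as the paper: the paper starts directly from the first-order Cauchy problem $\chi'=-\sqrt{2(F^F(\alpha)-F^F(\chi))}$, $\chi(0)=\alpha$, derives the second-order equation from the energy identity, and then extends by symmetry and by the constant, whereas you begin from the second-order problem with $u'(0)=0$ and recover the first integral --- these are equivalent entry points into the same argument. Your write-up is in fact more careful than the paper's terse proof on two points the paper leaves implicit: the comparison $F^F(\alpha)>F^F(z)$ across the negative equilibrium when $K_2^U>K_2^F$, and the finiteness of $L_\alpha^F$ via the integrability analysis near both endpoints.
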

In the same spirit, we have similarly~:
\begin{lemma}
    \label{lem:bubbleU}
    Assume \eqref{hyp:bd} holds.
    Let us assume that $\gamma^U<0$, and denote $\theta^U$ the unique real in $(-K_2^U(1-\frac{d_2}{b_2}),0)$ such that $F^U(\theta^U) = F^U \left(K_1^U \left(1- \frac{d_1}{b_1}\right) \right)$ (see Figure \ref{fig:FU}).
    Then, for any $\beta\in (-K_2^U\left(1-\frac{d_2}{b_2}\right),\theta^U)$, there exists a family of \textit{bubbles}, denoted $\chi^{0,U}_\beta$, such that $\chi_\beta^{0,U}$ is even, nondecreasing on $(0,+\infty)$, and
    $$
    \begin{cases}
        &-(\chi_\beta^{0,U})'' = f^U(\chi_\beta^{0,U}) \qquad \text{ on } (-L_\beta^U,L_\beta^U), \\[3mm]
        &\chi_\beta^{0,U}(0) = \beta, \quad \chi_\beta^{0,U} (\pm L_\beta^U) = \max(K_1^U,K_1^F)\left(1-\dfrac{d_1}{b_1}\right), \\[3mm]
        &\chi_\beta^{0,U}(x) = \max(K_1^U,K_1^F)\left(1-\dfrac{d_1}{b_1}\right) \quad \text{ on } \IR \setminus (-L_\beta^U,L_\beta^U),
    \end{cases}
    $$
    where
    $$
    L_\beta^U := \int_{\beta}^{\max(K_1^U,K_1^F)(1-\frac{d_1}{b_1})} \frac{dz}{\sqrt{2(F^U(\beta)-F^U(z))}}. 
    $$
\end{lemma}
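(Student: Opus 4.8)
The plan is to construct each $\chi_\beta^{0,U}$ by the classical time-map (phase-plane) method for the autonomous equation $-u''=f^U(u)$, exploiting the conserved energy $\tfrac12(u')^2+F^U(u)$. Write $M:=\max(K_1^U,K_1^F)(1-\tfrac{d_1}{b_1})$. The first step is a purely one-dimensional study of the potential $F^U$. From the sign of $f^U$ recalled in Remark~\ref{rem:bistability}, together with $f^U(u)=b_1u(1-\tfrac{u}{K_1^U})-d_1u<0$ for $u>K_1^U(1-\tfrac{d_1}{b_1})$, the function $F^U$ is strictly decreasing on $(-K_2^U(1-\tfrac{d_2}{b_2}),0)$, strictly increasing on $(0,K_1^U(1-\tfrac{d_1}{b_1}))$ and strictly decreasing on $(K_1^U(1-\tfrac{d_1}{b_1}),+\infty)$, while $\gamma^U<0$ gives $F^U(K_1^U(1-\tfrac{d_1}{b_1}))<F^U(-K_2^U(1-\tfrac{d_2}{b_2}))$. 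Hence for $\beta\in(-K_2^U(1-\tfrac{d_2}{b_2}),\theta^U)$ one has $F^U(\beta)>F^U(\theta^U)=F^U(K_1^U(1-\tfrac{d_1}{b_1}))$, and, comparing $F^U$ on each monotonicity interval, $F^U(\beta)>F^U(z)$ for every $z\in(\beta,M]$. Thus
$$
G(z):=2(F^U(\beta)-F^U(z))>0\qquad\text{for all }z\in(\beta,M],
$$
with $G(\beta)=0$ and $G(z)\sim-2f^U(\beta)\,(z-\beta)$ as $z\to\beta^+$ since $f^U(\beta)<0$. Consequently $z\mapsto1/\sqrt{G(z)}$ has an integrable square-root singularity at $\beta$ and is bounded near $M$, so $L_\beta^U=\int_\beta^M dz/\sqrt{G(z)}$ is a well-defined positive real.

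Next I would recover the profile on $[0,L_\beta^U]$ by quadrature: let $\Psi(u):=\int_\beta^u dz/\sqrt{G(z)}$, a continuous strictly increasing bijection of $[\beta,M]$ onto $[0,L_\beta^U]$, and set $u:=\Psi^{-1}$. Then $u(0)=\beta$, $u(L_\beta^U)=M$, $u$ is continuous, strictly increasing, and $C^1$ on $(0,L_\beta^U]$ with $u'(x)=\sqrt{G(u(x))}>0$; squaring gives $\tfrac12u'(x)^2=F^U(\beta)-F^U(u(x))$ on $[0,L_\beta^U]$, and differentiating this identity (using $(F^U)'=f^U$, continuous) yields $u'(x)(u''(x)+f^U(u(x)))=0$, hence $-u''=f^U(u)$ on $(0,L_\beta^U)$. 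I would then extend $u$ to an even function on $\IR$ by setting $\chi_\beta^{0,U}(x):=u(|x|)$ for $|x|\le L_\beta^U$ and $\chi_\beta^{0,U}(x):=M$ for $|x|>L_\beta^U$; by construction it is even, nondecreasing on $(0,+\infty)$, continuous, satisfies $\chi_\beta^{0,U}(0)=\beta$ and $\chi_\beta^{0,U}(\pm L_\beta^U)=M$, and equals $M$ off $(-L_\beta^U,L_\beta^U)$ — exactly the list of properties in the statement.

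The only genuinely delicate point — and the place I expect the real work to be — is the behaviour at the turning point $x=0$, where $u'(0)=0$: one must check that the quadrature-defined $u$ is $C^2$ there and solves $-u''=f^U(u)$ classically, so that the even reflection is legitimate and the equation holds on all of $(-L_\beta^U,L_\beta^U)$ and not just away from the origin. For this I would use $f^U(\beta)\ne0$: writing $G(z)=-2f^U(\beta)(z-\beta)(1+h(z))$ with $h$ continuous near $\beta$ and $h(\beta)=0$ (legitimate since $f^U$ is smooth on $(-\infty,0)$), the substitution $z-\beta=s^2$ shows $\Psi(u)$ is a $C^1$ function of $\sqrt{u-\beta}$ with nonvanishing derivative at $0$; inverting, $u(x)-\beta=-\tfrac12 f^U(\beta)\,x^2+o(x^2)$ and $u\in C^2$ near $0$ with $u''(0)=-f^U(\beta)$, so $-u''(0)=f^U(\beta)=f^U(\chi_\beta^{0,U}(0))$. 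Elsewhere $u''=-f^U(u)$ is continuous — the only mild non-smoothness of $f^U$ is the corner at the value $0$, which $u$ crosses transversally because there $u'=\sqrt{G(0)}>0$ thanks to $F^U(0)<F^U(\beta)$ — so $\chi_\beta^{0,U}\in C^2$ on $(-L_\beta^U,L_\beta^U)$ and the equation holds throughout. Everything else is elementary: monotonicity of $F^U$ and convergence of an improper integral. The argument is the exact mirror image of the construction for Lemma~\ref{lem:bubbleF}, with the roles of the two species, of $K_1$ and $K_2$, and of the sign conditions $\gamma^F>0$ and $\gamma^U<0$ interchanged.
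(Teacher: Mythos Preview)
Your proof is correct and follows essentially the same approach as the paper: the paper does not give a separate argument for Lemma~\ref{lem:bubbleU} but simply says it is obtained from the proof of Lemma~\ref{lem:bubbleF} by symmetry, and that proof is precisely the phase-plane/energy construction you carry out (solve the first-order reduction $\chi'=\pm\sqrt{2(F(\beta)-F(\chi))}$, invoke the time map to locate $L_\beta^U$, then reflect evenly and extend by the constant). Your treatment is in fact more careful than the paper's on one point: you verify the $C^2$ regularity of $\chi_\beta^{0,U}$ at the turning point $x=0$, whereas the paper derives $-\chi''=f(\chi)$ by differentiating the energy identity and tacitly dividing by $\chi'$, which vanishes there.
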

We are now in position to state the main result of this section~:
\begin{theorem}\label{BB}
With the above notations, let us assume that \eqref{hyp:bd} and \eqref{BBbb-ssamia} hold.
We assume moreover that the initial data $\omega^0$ is such that, there exist $\alpha\in (\theta^F,K_1^F(1-\frac{d_1}{b_1}))$, $x_\alpha > L_\alpha^F$, and $\beta\in (-K_2^U(1-\frac{d_2}{b_2}),\theta^U)$, $y_\beta>L_\beta^U$ such that, for all $x\in\IR$,
\begin{equation}
    \label{eq:omegaini}
    \chi_\alpha^{0,F}(x+x_\alpha) \leq \omega^0(x) \leq \chi_\beta^{0,U}(x-y_\beta).
\end{equation}
Then, the solution of the equation \eqref{eq:omegat} converges as $t$ goes to $+\infty$, for any $x\in\IR$, towards the unique stationary solution $\baromega$ of the problem
\begin{align}\label{BB-ssamia}
 \left\lbrace
\begin{array}{lcl}
 - \baromega'' = f(x,\baromega), \quad \text{ on } \IR, \\[2mm]
  \displaystyle\baromega(-\infty) = K_1^{F}\left(1 - \frac{d_1}{b_1}\right), \qquad 
  \displaystyle\baromega(+ \infty)  = -K_2^{U}\left(1 - \frac{d_2}{b_2}\right).
 \end{array}
\right. 
\end{align}
Moreover, $\baromega$ is decreasing on $\mathbb{R}$.
\end{theorem}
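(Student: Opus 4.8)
The plan is to combine the comparison principle for the scalar equation \eqref{eq:omegat} with a squeezing argument driven by the two families of critical bubbles, and then to identify the limit through a phase-plane analysis of the stationary problem \eqref{BB-ssamia}. First note that, writing $\omega_\pm=\tfrac12(|\omega|\pm\omega)$, the reaction $f(x,\cdot)$ is globally Lipschitz in $\omega$ uniformly in $x$, so \eqref{eq:omegat} generates an order-preserving semiflow and admits a comparison principle, valid also against merely continuous sub/supersolutions with favorably-signed corners. I would then check that, because $x_\alpha>L_\alpha^F$, the shifted bubble $\underline\psi:=\chi_\alpha^{0,F}(\cdot+x_\alpha)$ is a stationary subsolution of \eqref{eq:omegat}: on the bump, which lies entirely in $\{x<0\}$, it solves $-\underline\psi''=f^F(\underline\psi)=f(x,\underline\psi)$ exactly; off the bump it equals the constant $c:=-\max(K_2^F,K_2^U)(1-\tfrac{d_2}{b_2})$, which sits below the stable zeros of $f^F$ and $f^U$, so $f^F(c)\ge0$, $f^U(c)\ge0$ and $-\underline\psi''=0\le f(x,\underline\psi)$ (in particular across $x=0$, where $\underline\psi\equiv c$ is smooth); and at the two junctions $x=-x_\alpha\pm L_\alpha^F$ the first integral of the bubble ODE forces $\underline\psi'$ to jump upward, so the singular part of $-\underline\psi''$ has the right sign. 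Symmetrically, $\overline\psi:=\chi_\beta^{0,U}(\cdot-y_\beta)$ (using $y_\beta>L_\beta^U$) is a stationary supersolution, the relevant constant being $\tilde K_1:=\max(K_1^F,K_1^U)(1-\tfrac{d_1}{b_1})$ with $f^F(\tilde K_1)\le0$, $f^U(\tilde K_1)\le0$.

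Next, let $\underline\omega$, $\overline\omega$ solve \eqref{eq:omegat} with data $\underline\psi$, $\overline\psi$. Since $\underline\psi$ is a stationary subsolution, comparison of $\underline\omega(t+h,\cdot)$ with $\underline\omega(t,\cdot)$ shows $t\mapsto\underline\omega(t,\cdot)$ is nondecreasing; likewise $t\mapsto\overline\omega(t,\cdot)$ is nonincreasing, and $c\le\underline\omega\le\overline\omega\le\tilde K_1$. Monotonicity, uniform bounds and parabolic interior estimates give $\underline\omega(t,\cdot)\to\underline\omega_\infty$ and $\overline\omega(t,\cdot)\to\overline\omega_\infty$ in $C^2_{\mathrm{loc}}(\IR)$, where $\underline\omega_\infty\le\overline\omega_\infty$ are classical stationary solutions of $-\omega''=f(x,\omega)$ on $\IR$. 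By \eqref{eq:omegaini} and comparison, $\underline\omega(t,\cdot)\le\omega(t,\cdot)\le\overline\omega(t,\cdot)$ for all $t\ge0$, so it suffices to prove $\underline\omega_\infty=\overline\omega_\infty=\baromega$, after which $\omega(t,x)\to\baromega(x)$ follows by squeezing.

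The core of the proof is the phase-plane study. On $\{x<0\}$ every bounded stationary solution has constant energy $\tfrac12(\omega')^2+F^F(\omega)$, and by \eqref{BBbb-ssamia} the value $K_1^F(1-\tfrac{d_1}{b_1})$ is the \emph{global} maximum of $F^F$ while $0$ is a center; symmetrically $-K_2^U(1-\tfrac{d_2}{b_2})$ is the global maximum of $F^U$. Enumerating the bounded orbits of the two autonomous ODEs — the constants, the periodic orbits encircling $0$ (whose positive extent for $f^F$ is capped by $\theta^F$, whose negative extent for $f^U$ stays above $\theta^U$), and the homoclinic loop to the relevant saddle (capped above by $\theta^F$ for $f^F$, below by $\theta^U$ for $f^U$) — yields two things. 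First, \eqref{BB-ssamia} reduces to choosing $a:=\baromega(0)\in(-K_2^U(1-\tfrac{d_2}{b_2}),K_1^F(1-\tfrac{d_1}{b_1}))$, the profile being then automatically decreasing on each half-line, with $C^1$ matching at $x=0$ equivalent to $F^U(a)-F^F(a)=F^U(-K_2^U(1-\tfrac{d_2}{b_2}))-F^F(K_1^F(1-\tfrac{d_1}{b_1}))$; existence of $a$ comes from the global-maximum property (which makes the two sides of this equation straddle the constant on the right), and uniqueness from the fact that $f^U-f^F$ equals $b_1a^2(\tfrac1{K_1^F}-\tfrac1{K_1^U})$ for $a>0$ and $b_2a^2(\tfrac1{K_2^U}-\tfrac1{K_2^F})$ for $a<0$, whose signs, via the explicit identities $\gamma^{F/U}=\tfrac{(K_1^{F/U})^2(b_1-d_1)^3}{6b_1^2}-\tfrac{(K_2^{F/U})^2(b_2-d_2)^3}{6b_2^2}$, are compatible enough with the endpoint signs to force injectivity on that interval — giving uniqueness of $\baromega$ and that it is decreasing. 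Second, $\underline\omega_\infty$ and $\overline\omega_\infty$ are nonconstant and, from $\chi_\alpha^{0,F}(\cdot+x_\alpha)\le\underline\omega_\infty\le\overline\omega_\infty\le\chi_\beta^{0,U}(\cdot-y_\beta)$, satisfy $\underline\omega_\infty(-x_\alpha)\ge\alpha>\theta^F$ and $\overline\omega_\infty(y_\beta)\le\beta<\theta^U$; feeding these strict inequalities into the orbit enumeration excludes every bounded stationary profile except the decreasing ones with $\omega(-\infty)=K_1^F(1-\tfrac{d_1}{b_1})$ and $\omega(+\infty)=-K_2^U(1-\tfrac{d_2}{b_2})$, so both $\underline\omega_\infty$ and $\overline\omega_\infty$ solve \eqref{BB-ssamia}. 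With the uniqueness just established, $\underline\omega_\infty=\overline\omega_\infty=\baromega$, completing the proof.

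I expect the main obstacle to be precisely this phase-plane bookkeeping. On the one hand one must use the \emph{strict} inequalities $\alpha>\theta^F$ and $\beta<\theta^U$ to kill the periodic and homoclinic profiles, whose amplitudes are exactly what $\theta^F$ and $\theta^U$ measure. On the other hand one must verify that the monotonicity of $a\mapsto F^U(a)-F^F(a)$ on $(-K_2^U(1-\tfrac{d_2}{b_2}),K_1^F(1-\tfrac{d_1}{b_1}))$ is genuinely forced by $\gamma^F>0>\gamma^U$, the only danger being a decreasing and an increasing branch meeting at $a=0$; this cannot create a second root once the signs of the endpoint values are accounted for, as the explicit formulas for $\gamma^F,\gamma^U$ make clear. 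By comparison, the sub/supersolution check at the junctions is routine, provided one keeps the discontinuity of $f$ at $x=0$ in mind.
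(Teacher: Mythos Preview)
Your proposal is correct and follows essentially the same strategy as the paper: the shifted bubbles $\chi_\alpha^{0,F}(\cdot+x_\alpha)$ and $\chi_\beta^{0,U}(\cdot-y_\beta)$ are shown to be stationary sub/supersolutions, the solutions launched from them are monotone in time and converge to bounded stationary profiles, and a phase-plane/energy argument on each half-line (using that $F^F$, resp.\ $F^U$, is globally maximized at $K_1^F(1-\tfrac{d_1}{b_1})$, resp.\ $-K_2^U(1-\tfrac{d_2}{b_2})$) together with the $C^1$-matching condition $F^F(a)-F^U(a)=\mathrm{const}$ at $x=0$ gives existence, monotonicity, and uniqueness of $\baromega$; the threshold inequalities $>\theta^F$ and $<\theta^U$ then force both limits to coincide with $\baromega$. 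The only organizational difference is that the paper isolates the implication ``$\exists\,y<0$ with $\widetilde\omega(y)>\theta^F$ $\Rightarrow$ $\widetilde\omega(-\infty)=K_1^F(1-\tfrac{d_1}{b_1})$'' (and its mirror) as two preliminary lemmas rather than doing your full orbit enumeration in one sweep, and handles uniqueness of the matching value directly from the at-most-one sign change of $H'=f^F-f^U$ combined with the endpoint signs, without invoking the explicit formula for $\gamma^{F/U}$ (which, as you yourself note in the last paragraph, is not actually needed).
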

The interpretation of this Theorem is that, if the region $\{x<0\}$ is favourable to species 1 whereas the region $\{x>0\}$ is favourable to species 2, and if there are enough members of species 1 in the region $\{x<0\}$ and enough members of species 2 in the region $\{x>0\}$ initially, then each species will invade its favourable region.

\begin{remark}
    Let us comment on the condition \eqref{eq:omegaini} on the initial data, which may be seen as a condition on non-smallness of the initial condition. It expresses the fact that in a bistable dynamics the initial data should be large enough on a large enough domain to guarantee the invasion of the favourable species. 
    The question of initiating the invasion for a bistable reaction-diffusion equation in a homogeneous environment has been tackled by several mathematicians, in particular since the seminal paper \cite{Zlatos}, see also \cite{Matano, Muratov} and the question of optimizing the initial data has been also addressed more recently in \cite{Ana,Ana2}.
    Similar conditions involving the \textit{bubbles} have been used to study the invasion of a population of mosquitoes in \cite{BartonTurelli, Bliman, Zubelli}.
\end{remark}

Before proving this result, we prove Lemma \ref{lem:bubbleF} and Lemma \ref{lem:bubbleU}. Since the proof is similar for both, we only give the proof of Lemma \ref{lem:bubbleF}. Then we state some preliminary results useful for the proof of Theorem \ref{BB}.
\\

\textit{Proof of Lemma \ref{lem:bubbleF}.}
Let $\alpha \in (\theta^F,K_1^F(1-\frac{d_1}{b_1}))$, we consider the Cauchy problem, on $(0,+\infty)$,
$$
\chi' = -\sqrt{2(F^F(\alpha)-F^F(\chi))}, \qquad \chi(0) = \alpha.
$$
There exists a unique solution to this Cauchy problem denoted  by $\chi_\alpha^{0,F}$. 
Clearly, we have $(\chi_\alpha^{0,F})'<0$ as long as $\chi_\alpha^{0,F} < \alpha$ and, by definition of $L_\alpha^F$,
$\chi_\alpha^{0,F}(L_\alpha^F) = -\max(K_2^F,K_2^U)(1-\frac{d_2}{b_2})$.
It allows  us to define $\chi_\alpha^{0,F}$ on $(0,L_\alpha^F)$ nonincreasing which verifies 
$$\frac 12 ((\chi_\alpha^{0,F})')^2+F^F(\chi_\alpha^{0,F}) = F^F(\alpha).$$
Deriving this latter equality, we obtain $(\chi_\alpha^{0,F})'' + f^F(\chi_\alpha^{0,F}) = 0$.
Then we extend $\chi_\alpha^{0,F}$ by symmetry on $(-L_\alpha^F,L_\alpha^F)$, and on $\IR$ by setting 
$$\chi_\alpha^{0,F}(x) = -\max(K_2^F,K_2^U)(1-\frac{d_2}{b_2})\text{ on } \IR \setminus (-L_\alpha^F,L_\alpha^F).$$
\qed

\begin{lemma}\label{lem:omega_monotone_F}
    Assume that \eqref{hyp:bd} holds and $\gamma^F>0$. Let $\widetilde{\omega}$ be a bounded solution of $$-\widetilde{\omega}'' = f(x,\widetilde{\omega}).$$ 
    If there exists $\widetilde{y}<0$ such that $\theta_F < \widetilde{\omega}(\widetilde{y}) $, where $\theta_F$ is defined in Lemma \ref{lem:bubbleF}. 
    Then, 
    $$
    \lim_{x\to -\infty} \widetilde{\omega}(x) = K_1^F\left(1-\frac{d_1}{b_1}\right).
    $$
\end{lemma}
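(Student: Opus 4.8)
The plan is to turn this into a question about the phase portrait of an autonomous ODE. On $(-\infty,0)$ the equation reads $-\widetilde\omega''=f^F(\widetilde\omega)$, so the energy $E(x):=\tfrac12\,\widetilde\omega'(x)^2+F^F(\widetilde\omega(x))$ is constant on $(-\infty,0)$, and since $\widetilde\omega$ is bounded and $F^F$ continuous, $\widetilde\omega'$ (hence $\widetilde\omega''=-f^F(\widetilde\omega)$) is bounded there too. Thus $x\mapsto(\widetilde\omega(x),\widetilde\omega'(x))$, $x<0$, is an integral curve, defined for all $x<0$, of the conservative planar system $u'=v$, $v'=-f^F(u)$. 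Writing $m_1:=K_1^F(1-\tfrac{d_1}{b_1})$ and $m_2:=K_2^F(1-\tfrac{d_2}{b_2})$, recall that $f^F$ vanishes exactly at $-m_2<0<m_1$ and at $0$, that $(0,0)$ is a centre while $(-m_2,0)$ and $(m_1,0)$ are hyperbolic saddles, and that by \eqref{BBbb-ssamia} one has $F^F(m_1)=\max_{\IR}F^F$ with $F^F(m_1)>F^F(-m_2)=F^F(\theta^F)$, $\theta^F\in(0,m_1)$ being the level introduced in Lemma~\ref{lem:bubbleF}.

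The core observation is this. Since the integral curve of $\widetilde\omega$ is defined on all of $(-\infty,0)$, it is contained in an orbit of the flow whose maximal existence interval has the form $(-\infty,x_+)$ — call such an orbit \emph{backward complete}. Because $f^F$ grows quadratically at $\pm\infty$, any orbit along which $|u|\to\infty$ in backward time blows up in finite backward time (the relevant integral $\int^{\pm\infty}dz/\sqrt{2(E-F^F(z))}$ converges); hence the backward complete orbits are precisely the equilibria, the periodic orbits encircling $(0,0)$, the loop homoclinic to $(-m_2,0)$, the branch of the unstable manifold of $(-m_2,0)$ running to $u=-\infty$, and the two branches of the unstable manifold of $(m_1,0)$. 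A direct inspection of the potential $F^F$ (increasing on $(0,m_1)$, with $F^F(\theta^F)=F^F(-m_2)$) shows that on the centre, on the saddle $(-m_2,0)$, on the periodic orbits, on the homoclinic loop and on the unstable branch of $(-m_2,0)$ the $u$‑coordinate never exceeds $\theta^F$; only the equilibrium $(m_1,0)$ and the two unstable branches of $(m_1,0)$ reach $\{u>\theta^F\}$, and on each of these three one has $(u(x),v(x))\to(m_1,0)$ as $x\to-\infty$.

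To conclude: by hypothesis there is $\widetilde y<0$ with $\widetilde\omega(\widetilde y)>\theta^F$, so the integral curve of $\widetilde\omega$ meets $\{u>\theta^F\}$; by the previous paragraph the backward complete orbit carrying it must be $(m_1,0)$ or an unstable branch of $(m_1,0)$, whence $(\widetilde\omega(x),\widetilde\omega'(x))\to(m_1,0)$, and in particular $\widetilde\omega(x)\to m_1=K_1^F(1-\tfrac{d_1}{b_1})$ as $x\to-\infty$. The delicate points are the classification of the backward complete orbits — which rests on the finite‑time escape of unbounded conservative orbits together with the standard picture of a single‑well potential flanked by two hills of unequal height — and the bookkeeping that, among them, only those through $(m_1,0)$ meet $\{u>\theta^F\}$; both reduce to elementary one‑dimensional analysis once the sign condition $\gamma^F>0$ is used.
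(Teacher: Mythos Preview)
Your proof is correct, but it takes a genuinely different route from the paper's. The paper argues by hand: assuming first that $\theta^F<\widetilde\omega(\widetilde y)<K_1^F(1-d_1/b_1)$, it shows (i) $\widetilde\omega<K_1^F(1-d_1/b_1)$ on $(-\infty,\widetilde y)$ by a convexity/blow-up contradiction, then (ii) $\widetilde\omega$ is nonincreasing on $(-\infty,\widetilde y)$, again by contradiction using the energy identity $\tfrac12(\widetilde\omega')^2+F^F(\widetilde\omega)=\text{const}$ and the fact that $F^F(\omega)<F^F(\widetilde\omega(z))$ for all $\omega<\widetilde\omega(z)$ once $\widetilde\omega(z)>\theta^F$; the limit then has to be a root of $f^F$ above $\theta^F$. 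The remaining case $\widetilde\omega\ge K_1^F(1-d_1/b_1)$ everywhere on $(-\infty,0)$ is handled by convexity alone.

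You instead classify the backward-complete orbits of the planar conservative system $u'=v$, $v'=-f^F(u)$ and check which of them enter $\{u>\theta^F\}$. This is cleaner conceptually and immediately yields the companion Lemma~\ref{lem:omega_monotone_U} by the obvious symmetry, whereas the paper has to repeat (or claim to repeat) the argument. On the other hand, the paper's approach is more elementary: it needs no phase-portrait vocabulary, no reference to stable/unstable manifolds, and no growth assumption on $f^F$. In fact your finite-backward-time blow-up step (via the convergent integral $\int dz/\sqrt{2(E-F^F(z))}$) is correct but unnecessary here: since $\widetilde\omega$ is bounded by hypothesis, the phase curve is bounded outright by energy conservation, and you could restrict attention to bounded orbits from the start. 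Either way, the identification of $\{u>\theta^F\}$ with the unstable set of $(m_1,0)$ is the heart of both arguments, and your bookkeeping of the level sets of $F^F$ is accurate.
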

\begin{proof}
    Let us assume that there exists $\widetilde{y}<0$ such that $\widetilde{\omega}(\widetilde{y}) >\theta_F$ and $\widetilde{\omega}(\widetilde{y}) < K_1^F(1-\frac{d_1}{b_1})$.
    Then, we have $\widetilde{\omega}<K_1^F(1-\frac{d_1}{b_1})$ on $(-\infty,\widetilde{y})$. Indeed, if there exists $z<\widetilde{y}$ such that $\widetilde{\omega}(z)=K_1^F(1-\frac{d_1}{b_1})$ and $\widetilde{\omega}(x)<K_1^F(1-\frac{d_1}{b_1})$ for all $x\in (z,\widetilde{y}]$. Then $\widetilde{\omega}'(z)\leq 0$, but if $\widetilde{\omega}'(z)= 0$, then by uniqueness of the solution of the Cauchy problem satisfied by $\widetilde{\omega}$, we must have $\widetilde{\omega}=K_1^F(1-\frac{d_1}{b_1})$ which is a contradiction.
    Hence $\widetilde{\omega}'(z)< 0$, however when $\omega \geq K_1^F(1-\frac{d_1}{b_1})$, we have $f^F(\omega)\leq 0$, then the solution to $-\widetilde{\omega}'' = f^F(\widetilde{\omega})$ is convex with $\widetilde{\omega}'(z)<0$  This implies that $$\displaystyle lim_{x\to -\infty} \widetilde{\omega}(x) = +\infty,$$ which is a contradiction with the fact that $\widetilde{\omega}$ is bounded.
    
    Then, let us prove that $\widetilde{\omega}$ is nonincreasing on $(-\infty,\widetilde{y})$. By contradiction, if $\widetilde{\omega}$ is not nonincreasing on $(-\infty,\widetilde{y})$, then there exists $z\in (-\infty,\widetilde{y})$ such that $\widetilde{\omega}(z)>\theta_F$ and $\widetilde{\omega}'(z)>0$.
    Thus it is a solution of the Cauchy problem on $(-\infty,z)$
    $$
    -\widetilde{\omega}'' = f^F(\widetilde{\omega}), \qquad \widetilde{\omega}(z) >\theta_F, \quad \widetilde{\omega}'(z) >0.
    $$
    This solution satisfis, for all $x\in(-\infty,z)$,
    $$
    \frac 12 (\widetilde{\omega}'(x))^2 + F^F(\widetilde{\omega}(x)) = \frac 12 (\widetilde{\omega}'(z))^2 + F^F(\widetilde{\omega}(z)).
    $$
    Since $\theta_F<\widetilde{\omega}(z)<K_1^F(1-\frac{d_1}{b_1})$, then for any $\omega<\widetilde{\omega}(z)$, we have
    $F^F(\omega)<F^F(\widetilde{\omega}(z))$ (see Figure \ref{fig:FF}). We deduce from the latter equality that for any $x<z$, we have
    $$
    (\widetilde{\omega}'(x))^2 > (\widetilde{\omega}'(z))^2 > 0.
    $$
    We deduce that $\widetilde{\omega}$ is strictly increasing on $(-\infty,z)$ with a uniformly positive lower bound on the derivative. This is in contradiction with the fact that $\widetilde{\omega}$ is bounded.

    Hence,  $\widetilde{\omega}$ is nonincreasing, and since it is also bounded, it admits a limit at $-\infty$. From the equation satisfied by $\widetilde{\omega}$, this limit should be a root of $f^F$ and should be greater than $\theta_F$. The only possibility is $K_1^F(1-\frac{d_1}{b_1})$. It concludes when $\widetilde{\omega}(\widetilde{y})<K_1^F(1-\frac{d_1}{b_1})$.

    Now, if for all $y<0$, we have $\widetilde{\omega}(y) \geq K_1^F(1-\frac{d_1}{b_1})$, then by definition of $f^F$, we have $$  f^F(\widetilde{\omega}) = -\widetilde{\omega}''  \leq 0 \text{ on } (-\infty,0). $$
    Hence $\widetilde{\omega}$ is convex and bounded on $(-\infty,0)$. Thus $\widetilde{\omega}$ is nondecreasing and converges as $x$ goes to $-\infty$ to the unique root of $f^F$ greater than $K^F_1(1-\frac{d_1}{b_1})$.
\end{proof}

The following lemma may be obtained similarly~:
\begin{lemma}\label{lem:omega_monotone_U}
    Assume that \eqref{hyp:bd} holds and $\gamma^U<0$. Let $\widetilde{\omega}$ be a bounded solution of $$-\widetilde{\omega}'' = f(x,\widetilde{\omega}).$$
    If there exists $\widetilde{y}>0$ such that $\widetilde{\omega}(y) < \theta_U$, where $\theta_U$ is defined in Lemma \ref{lem:bubbleU}. 
    Then, 
    $$
    \lim_{x\to +\infty} \widetilde{\omega}(x) = -K_2^U\left(1-\frac{d_2}{b_2}\right).
    $$
\end{lemma}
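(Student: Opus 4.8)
The plan is to mirror, with the obvious reflections, the proof of Lemma~\ref{lem:omega_monotone_F} given just above: one exchanges the roles of $+\infty$ and $-\infty$, replaces the upper state $K_1^F(1-\tfrac{d_1}{b_1})$ by the lower state $-K_2^U(1-\tfrac{d_2}{b_2})$, and reads the geometry of $F^U$ off Figure~\ref{fig:FU} in place of that of $F^F$. The essential structural point is that on $(\widetilde y,+\infty)$, with $\widetilde y>0$, the coefficients are those of the urban region, so $\widetilde\omega$ there solves the \emph{autonomous} equation $-\widetilde\omega''=f^U(\widetilde\omega)$; this allows the use of energy conservation $\tfrac12(\widetilde\omega')^2+F^U(\widetilde\omega)=\text{const}$ together with the sign of $f^U$. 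I would first record, using \eqref{hyp:bd}, that $f^U(\omega)\ge0$ for $\omega\le-K_2^U(1-\tfrac{d_2}{b_2})$ and that the only roots of $f^U$ are $-K_2^U(1-\tfrac{d_2}{b_2})$, $0$ and $K_1^U(1-\tfrac{d_1}{b_1})$, then treat the main case $-K_2^U(1-\tfrac{d_2}{b_2})<\widetilde\omega(\widetilde y)<\theta_U$.

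The first step would be to show that $\widetilde\omega>-K_2^U(1-\tfrac{d_2}{b_2})$ on $(\widetilde y,+\infty)$. If not, let $z>\widetilde y$ be a first crossing; then $\widetilde\omega'(z)\le0$, and $\widetilde\omega'(z)=0$ is ruled out by Cauchy uniqueness at the equilibrium value, so $\widetilde\omega'(z)<0$. Since $f^U\ge0$ below that state, $\widetilde\omega$ is concave on $\{x>z\}$, which forces $\widetilde\omega'\le\widetilde\omega'(z)<0$ and hence $\widetilde\omega\to-\infty$, contradicting boundedness. The second step would be to show that $\widetilde\omega$ is nonincreasing on $(\widetilde y,+\infty)$. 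Arguing by contradiction, if it is not, there is $z>\widetilde y$ with $\widetilde\omega(z)<\theta_U$ and $\widetilde\omega'(z)>0$ (one may take $z$ just to the right of the first point where $\widetilde\omega$ stops decreasing, whose value is $\le\widetilde\omega(\widetilde y)<\theta_U$). The crux is the comparison $F^U(\omega)<F^U(\widetilde\omega(z))$ for every $\omega>\widetilde\omega(z)$: this follows from Figure~\ref{fig:FU}, since $F^U$ decreases on $(-K_2^U(1-\tfrac{d_2}{b_2}),0)$ and the defining relation $F^U(\theta_U)=F^U(K_1^U(1-\tfrac{d_1}{b_1}))$, combined with $\gamma^U<0$, guarantees that $F^U$ never climbs back above $F^U(\widetilde\omega(z))$ to the right of $\widetilde\omega(z)$. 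Energy conservation then yields $(\widetilde\omega'(x))^2>(\widetilde\omega'(z))^2>0$ as long as $\widetilde\omega(x)>\widetilde\omega(z)$; as $\widetilde\omega'$ cannot vanish it stays positive, so $\widetilde\omega$ is increasing with derivative bounded below, whence $\widetilde\omega\to+\infty$, which is again impossible.

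Combining these two steps, $\widetilde\omega$ is nonincreasing and bounded on $(\widetilde y,+\infty)$, hence admits a limit $\ell$ at $+\infty$; passing to the limit in the equation shows $\ell$ is a root of $f^U$ with $\ell\le\widetilde\omega(\widetilde y)<\theta_U$, and since $\theta_U<0$ the only such root is $-K_2^U(1-\tfrac{d_2}{b_2})$. Finally, the degenerate case in which $\widetilde\omega(y)\le-K_2^U(1-\tfrac{d_2}{b_2})$ for all $y>0$ would be handled exactly as the last paragraph of Lemma~\ref{lem:omega_monotone_F}: there $f^U(\widetilde\omega)\ge0$, so $\widetilde\omega$ is concave and bounded on $(0,+\infty)$, hence nondecreasing, and converges as $x\to+\infty$ to the unique root of $f^U$ not exceeding $-K_2^U(1-\tfrac{d_2}{b_2})$, namely $-K_2^U(1-\tfrac{d_2}{b_2})$.

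I expect the only genuinely delicate point to be the monotone comparison of $F^U$ in the second step, that is, verifying carefully from $\gamma^U<0$ and the definition of $\theta_U$ that $F^U$ does not return above the level $F^U(\widetilde\omega(z))$ anywhere to the right of $\widetilde\omega(z)$ (in particular near the other local maximum at $K_1^U(1-\tfrac{d_1}{b_1})$). Once this inequality is secured, every other ingredient is a direct reflection of the preceding lemma, and no new estimate is needed.
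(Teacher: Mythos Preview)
Your proposal is correct and is precisely the symmetry argument the paper intends: the paper itself gives no proof of Lemma~\ref{lem:omega_monotone_U} beyond the sentence ``The following lemma may be obtained similarly,'' and your write-up carries out exactly that reflection of the proof of Lemma~\ref{lem:omega_monotone_F}, with the energy identity for $f^U$ and the geometry of $F^U$ from Figure~\ref{fig:FU} replacing those of $f^F$ and $F^F$. Your identification of the one delicate point---that $F^U$ never returns above the level $F^U(\widetilde\omega(z))$ to the right of $\widetilde\omega(z)$ because $F^U(K_1^U(1-\tfrac{d_1}{b_1}))=F^U(\theta_U)<F^U(\widetilde\omega(z))$---is correct and is the exact analogue of the corresponding step in the paper's proof of Lemma~\ref{lem:omega_monotone_F}.
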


As a consequence of these two Lemmas, we have the following existence and uniqueness result~:
\begin{proposition}
    \label{prop:stat}
    Assume that \eqref{hyp:bd} holds and that $\gamma^F>0$ and $\gamma^U<0$. 
    There exists a unique bounded stationary solution $\baromega$ of the problem 
    $$
    -\baromega'' = f(x,\baromega), \qquad \text{on } \IR,
    $$
    such that there exist $y_F<0$ and $y_U>0$ such that $\theta_F<\baromega(y_F)$ and $\baromega(y_U)<\theta_U$.

    Moreover, $\baromega$ is decreasing on $\IR$.
\end{proposition}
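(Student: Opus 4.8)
The strategy is to recast the stationary problem \eqref{BB-ssamia} as a phase--plane matching problem. Write $m_1:=K_1^F(1-\tfrac{d_1}{b_1})$ and $m_2:=K_2^U(1-\tfrac{d_2}{b_2})$; inspecting the critical points of $F^F$ (the roots $-K_2^F(1-\tfrac{d_2}{b_2}),0,m_1$ of $f^F$) and of $F^U$ (the roots $-m_2,0,K_1^U(1-\tfrac{d_1}{b_1})$ of $f^U$) together with the sign information in Figures \ref{fig:FF}--\ref{fig:FU}, the assumption $\gamma^F>0$ is equivalent to $m_1$ being the \emph{unique} global maximizer of $F^F$, and $\gamma^U<0$ to $-m_2$ being the unique global maximizer of $F^U$. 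Now any bounded stationary solution $\widetilde\omega$ is of class $C^1(\IR)$ (because $f(x,\widetilde\omega(x))$ is bounded, so $\widetilde\omega''\in L^\infty_{\rm loc}$), it solves the autonomous equation $-\widetilde\omega''=f^F(\widetilde\omega)$ on $(-\infty,0)$ and $-\widetilde\omega''=f^U(\widetilde\omega)$ on $(0,+\infty)$, and the corresponding energies $\tfrac12(\widetilde\omega')^2+F^F(\widetilde\omega)$ and $\tfrac12(\widetilde\omega')^2+F^U(\widetilde\omega)$ are constant on each half-line. If $\widetilde\omega(\widetilde y)>\theta_F$ for some $\widetilde y<0$ and $\widetilde\omega(y)<\theta_U$ for some $y>0$, Lemmas \ref{lem:omega_monotone_F}--\ref{lem:omega_monotone_U} give $\widetilde\omega(-\infty)=m_1$ and $\widetilde\omega(+\infty)=-m_2$; a bounded solution of a second-order autonomous ODE that converges at $\pm\infty$ has vanishing derivative there, so the left energy is $F^F(m_1)=\max F^F$ and the right energy is $F^U(-m_2)=\max F^U$. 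Consequently $\widetilde\omega$ is entirely determined by the single number $\alpha:=\widetilde\omega(0)$, through the first-order ODEs $\widetilde\omega'=-\sqrt{2(\max F^F-F^F(\widetilde\omega))}$ on $(-\infty,0]$ and $\widetilde\omega'=-\sqrt{2(\max F^U-F^U(\widetilde\omega))}$ on $[0,+\infty)$, and $C^1$-matching at $x=0$ is equivalent to $G(\alpha)=0$, where
\[
G(\alpha):=\bigl(F^U(-m_2)-F^U(\alpha)\bigr)-\bigl(F^F(m_1)-F^F(\alpha)\bigr).
\]

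For existence I would first solve $G(\alpha)=0$ in $(-m_2,m_1)$. One computes $G(m_1)=F^U(-m_2)-F^U(m_1)>0$ and $G(-m_2)=F^F(-m_2)-F^F(m_1)<0$, both strict inequalities being exactly the translation of $\gamma^U<0$ (a positive argument of $F^U$ gives a value strictly below $\max F^U$) and $\gamma^F>0$ (a negative argument of $F^F$ gives a value strictly below $\max F^F$); the intermediate value theorem yields $\alpha\in(-m_2,m_1)$ with $G(\alpha)=0$. For this $\alpha$ I would then build $\baromega$ by solving the two first-order Cauchy problems above from $\baromega(0)=\alpha$: since $F^F<\max F^F$ on $(-\infty,m_1)$ and $F^U<\max F^U$ on $(-m_2,+\infty)$, their right-hand sides are locally Lipschitz along the trajectories, which are therefore globally defined, strictly decreasing, contained in $(-m_2,m_1)$, and --- because $\max F^F-F^F$ vanishes quadratically at $m_1$ and $\max F^U-F^U$ at $-m_2$ (equivalently, both are hyperbolic rest points of the second-order equation) --- reach $m_1$ only at $-\infty$ and $-m_2$ only at $+\infty$. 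Differentiating the energy identities shows the glued function solves $-\baromega''=f(x,\baromega)$ on $\IR$; by construction it is decreasing with $\baromega(-\infty)=m_1>\theta_F$ and $\baromega(+\infty)=-m_2<\theta_U$, hence satisfies all the requirements.

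For uniqueness, let $\widetilde\omega$ be a bounded stationary solution with the stated property. By the first paragraph its left energy is $\max F^F$ and its right energy $\max F^U$, so on $(-\infty,0]$ one has $\tfrac12(\widetilde\omega')^2=\max F^F-F^F(\widetilde\omega)$, which (since $m_1$ is the unique maximizer of $F^F$) vanishes only where $\widetilde\omega=m_1$; but at such a point the derivative also vanishes, and Cauchy--Lipschitz uniqueness for $-\omega''=f^F(\omega)$ would force $\widetilde\omega\equiv m_1$ on $(-\infty,0]$, hence $\widetilde\omega(0)=m_1$, $\widetilde\omega'(0)=0$, making the right energy equal $F^U(m_1)<\max F^U$ and precluding (by continuity of $F^U$ near $-m_2$) the limit $\widetilde\omega(+\infty)=-m_2$. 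So $\widetilde\omega'$ vanishes nowhere on $(-\infty,0]$, and likewise on $[0,+\infty)$; having constant sign on $\IR$ and $\widetilde\omega(-\infty)=m_1>-m_2=\widetilde\omega(+\infty)$, we get $\widetilde\omega'<0$ on $\IR$. Thus $\widetilde\omega$ is decreasing, $\alpha:=\widetilde\omega(0)\in(-m_2,m_1)$, $\widetilde\omega$ coincides on each half-line with the first-order flow through $\alpha$, and $C^1$-matching gives $G(\alpha)=0$. Finally, $G'(\alpha)=f^F(\alpha)-f^U(\alpha)$ equals $b_2\alpha^2\bigl(\tfrac1{K_2^F}-\tfrac1{K_2^U}\bigr)$ for $\alpha<0$ and $b_1\alpha^2\bigl(\tfrac1{K_1^U}-\tfrac1{K_1^F}\bigr)$ for $\alpha>0$, so $G$ is monotone on $(-\infty,0]$ and on $[0,+\infty)$; a short case analysis on these two (constant) signs, combined with $G(-m_2)<0<G(m_1)$, excludes the only configuration with no sign change and shows $G$ vanishes exactly once on $(-m_2,m_1)$. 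Hence $\alpha$, and so $\widetilde\omega=\baromega$, is unique, and $\baromega$ is decreasing as shown.

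I expect the last point --- uniqueness of the matching parameter $\alpha$ --- to be the main difficulty: $G$ is in general not globally monotone, since $G'$ may change sign at $0$ when $K_1^F-K_1^U$ and $K_2^U-K_2^F$ have opposite signs, so one really must combine the piecewise monotonicity of $G$ with the boundary signs forced by $\gamma^F>0$ and $\gamma^U<0$. The other mildly delicate point is the well-posedness of the two degenerate first-order Cauchy problems at $m_1$ and $-m_2$; this is handled by observing that both are hyperbolic equilibria of the second-order equation, so the trajectories approach them exponentially and extend to all of $\IR$.
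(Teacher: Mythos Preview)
Your proposal is correct and follows essentially the same route as the paper: reduction via Lemmas \ref{lem:omega_monotone_F}--\ref{lem:omega_monotone_U} to the boundary conditions, energy conservation on each half-line to force strict monotonicity, and a shooting/matching argument at $x=0$ via a scalar function of the interface value (your $G$ coincides with the paper's $H(\omega)=F^F(\omega)-F^U(\omega)-\big(F^F(m_1)-F^U(-m_2)\big)$), with existence by the intermediate value theorem and uniqueness from the piecewise monotonicity of $G$ combined with the boundary signs $G(-m_2)<0<G(m_1)$. Your treatment is slightly more careful than the paper's on the well-posedness of the degenerate first-order Cauchy problems at $m_1$ and $-m_2$ (hyperbolicity of the rest points), and your formula $G'(\alpha)=b_2\alpha^2\big(\tfrac{1}{K_2^F}-\tfrac{1}{K_2^U}\big)$ for $\alpha<0$ is in fact the correct sign (the paper's displayed expression for $H'$ on $\{\omega<0\}$ carries a harmless sign typo), but none of this changes the argument.
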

\begin{proof}
From Lemma \ref{lem:omega_monotone_F} and \ref{lem:omega_monotone_U}, if such a solution exists, then necessarily, we have 
$$
\lim_{x\to -\infty} \baromega(x) = K_1^F\left(1-\frac{d_1}{b_1}\right), \quad \text{ and }\  
\lim_{x\to +\infty} \baromega(x) = -K_2^U\left(1-\frac{d_2}{b_2}\right).
$$
That is, it is a solution to \eqref{BB-ssamia}.
Then, we split the domain into $\{x<0\}$ and $\{x>0\}$. 

On the set  $\{x>0\}$, the problem reads
$$ 
- \baromega'' = f^U(\baromega), \qquad 
  \displaystyle\baromega(+ \infty)  = -K_2^{U}\big(1 - \frac{d_2}{b_2}\big).
$$
Then, the energy $\mathbf{{E}} = \frac{1}{2} (\baromega')^2 + F^U(\baromega)$ is constant.
In particular, $\mathbf{{E}}(x) = \mathbf{{E}}(+ \infty)$, from which we deduce  that, 
$$
(\baromega')^2 =   2 F^U\left(-K_2^U\left(1-\frac{d_2}{b_2}\right)\right) - 2 F^U(\baromega).
$$
Since $F^U$ is maximal at the point $-K_2^U\left(1-\frac{d_2}{b_2}\right)$, we deduce that $\baromega'$ cannot vanish on $(0,+\infty)$ except for $\baromega = -K_2^U(1-\frac{d_2}{b_2})$.
However, if there exists $\zeta<0$ such that $\baromega(\zeta)=-K_2^U(1-\frac{d_2}{b_2})$ and $\baromega'(\zeta)=0$, then by uniqueness of the Cauchy problem, we have $\baromega(x) = -K_2^U(1-\frac{d_2}{b_2})$ for all $x\in [0,+\infty)$.

By the same  argument  , on the set $\{x<0\}$, we have
$$
(\baromega')^2 =   2 F^F\left(K_1^F\left(1-\frac{d_1}{b_1}\right)\right) - 2 F^F(\baromega).
$$
Since $F^F$ is maximal at the point $K_1^F\left(1-\frac{d_1}{b_1}\right)$, we deduce as above that $\baromega'$ cannot vanish on $(0,+\infty)$ except if $\baromega = K_1^F(1-\frac{d_1}{b_1})$ on $(-\infty,0]$.

However, from the above observation, if $\omega'(0) = 0$, we should have $\omega(0)=-K_2^U(1-\frac{d_2}{b_2})$ and $\omega(0) = K_1^F(1-\frac{d_1}{b_1})$ which is clearly not possible. 
Hence, $\baromega'$ cannot vanish, thus is monotonous and therefore decreasing, given the limits to infinity. 
We have proved that any bounded stationary solution such as in the statement of the Proposition \ref{prop:stat} is decreasing on $\IR$.

 Now, let us show the existence of such a solution. 
Let us introduce for some $\underline{\omega} > - K_2^U (1-\frac{d_2}{b_2}),$ the Cauchy problem on $(0,+\infty)$ 
\begin{align}\label{32}
 \left\lbrace
\begin{array}{lcl}
 \displaystyle \baromega' =  -\sqrt{ 2 F^U\left(-K_2^U\left(1-\frac{d_2}{b_2}\right)\right) -2 F^U(\baromega) }\ , \\[4mm]
 \baromega(0)  = \underline{\omega}.
 \end{array}
\right. 
\end{align}
The solution of this Cauchy problem is well-defined on $(0,+\infty)$ since $F^U$ is maximal at $-K_2^U(1-\frac{d_2}{b_2})$.
Moreover, since the constant $-K_2^U(1-\frac{d_2}{b_2})$ is a solution of this Cauchy problem, by uniqueness, we have $\baromega>-K_2^U(1-\frac{d_2}{b_2})$. Finally, when $x \rightarrow + \infty $, $\baromega$ should converge (since it is nonincreasing and bounded from below) towards a zero of the right hand side, which is $-K_2^U(1-\frac{d_2}{b_2})$.

On the set $\{x<0\}$, by the same token, let us introduce, for some $\underline{\omega} <  K_1^F (1-\frac{d_1}{b_1})$, the Cauchy problem on $(-\infty,0)$ 
\begin{align}\label{3002}
 \left\lbrace
\begin{array}{lcl}
 \displaystyle \baromega' =  -\sqrt{ 2 F^F\left(K_1^F\left(1-\frac{d_1}{b_1}\right)\right) -2 F^F(\baromega) }, \\[4mm]
 \baromega(0)  = \underline{\omega}.
 \end{array}
\right. 
\end{align}
As above, since $F^F$ is maximal at $K_1^F\left(1-\frac{d_1}{b_1}\right)$, we deduce that $\baromega$ is well-defined on $(-\infty,0)$, is nonincreasing and goes to $K_1^F\left(1-\frac{d_1}{b_1}\right)$ as $x$ goes to $-\infty$.

 Finally, we have constructed a solution on $(-\infty,0)$ and on $(0,+\infty)$. We are left to verify that we may find $\underline{\omega}\in (-K_2^U(1-\frac{d_2}{b_2}),K_1^F(1-\frac{d_1}{b_1}))$ such that this solution is differentiable at $ x = 0$, i.e. $ \baromega'(0^+) = \baromega'(0^-)$.
Consider the  function $H$ defined  by
 $$ H(\omega)= F(\omega) - C, $$
 where 
$$ 
F(\omega) = F^F(\omega)-F^U(\omega), \quad
and \quad
 C =  F^F(K_1^F(1-\frac{d_1}{b_1}))-F^U(-K_2^U(1-\frac{d_2}{b_2})). 
$$
Clearly $H$ is continuous and differentiable. Moreover, we compute
\begin{align*}
H(K_1^F(1-\frac{d_1}{b_1})) & =  F^F(K_1^F\big(1-\frac{d_1}{b_1})) -  F^U(K_1^F\big(1-\frac{d_1}{b_1}))  \\
 \quad & -  F^F(K_1^F(1-\frac{d_1}{b_1}))+F^U(-K_2^U(1-\frac{d_2}{b_2})) \\ 
& = F^U(K_1^U\big(1-\frac{d_1}{b_1}))-F^U(K_1^F\big(1-\frac{d_1}{b_1}))-\gamma^U. 
\end{align*}
We recall (see Figure \ref{fig:FU}) that $F^U$ has two local maxima in $-K_2^U(1-\frac{d_2}{b_2})$ and in $K_1^U(1-\frac{d_1}{b_1})$. Hence $F^U(K_1^U\big(1-\frac{d_1}{b_1}))>F^U(K_1^F\big(1-\frac{d_1}{b_1}))$.
Using also assumption \eqref{BBbb-ssamia}, we deduce that $H(K_1^F(1-\frac{d_1}{b_1}))>0$.
By the same token, we have
\begin{align*}
H(-K_2^U(1-\frac{d_2}{b_2})) & =  F^F(-K_2^U(1-\frac{d_2}{b_2})) -  F^U(-K_2^U(1-\frac{d_2}{b_2}))  \\
& \quad -  F^F(K_1^F(1-\frac{d_1}{b_1}))+F^U(-K_2^U(1-\frac{d_2}{b_2})) \\ 
&= F^F(-K_2^U(1-\frac{d_2}{b_2})) - F^F(-K_2^F(1-\frac{d_2}{b_2})) - \gamma^F <0.
\end{align*}
We deduce that  the function $H$ changes sign at least once on the interval $(-K_2^U(1-\frac{d_2}{b_2}),K_1^F(1-\frac{d_1}{b_1}))$.
 So there exists $-K_2^U(1-\frac{d_2}{b_2})<\underline{\omega}<K_1^F(1-\frac{d_1}{b_1})$  such that $H(\underline{\omega})=0$, which implies that $$ F^F(K_1^F(1-\frac{d_1}{b_1})) - F^F(\underline{\omega}) = F^U(-K_2^U(1-\frac{d_2}{b_2})) - F^U(\underline{\omega}). $$
From \eqref{32} and \eqref{3002}, it implies that $\baromega'(0^+) = \baromega'(0^-)$.
 This concludes the proof of existence.

To prove the uniqueness, we have seen above that every stationary solution such as in the statement of the Proposition \ref{prop:stat} verifies \eqref{32} on $(-\infty,0)$ and \eqref{3002} on $(0,+\infty)$. 
Hence, we are left to show the uniqueness of the root of the function $H$.
After straightforward computations, yield
$$
H'(\omega) = f^F(\omega)-f^U(\omega) = b_1\left(\dfrac{1}{K_1^U}-\dfrac{1}{K_1^F}\right) \omega_+^2 + b_2 \left(\dfrac{1}{K_2^U}-\dfrac{1}{K_2^F}\right) \omega_-^2.
$$
Hence, depending on the sign of $K_1^U-K_1^F$ and $K_2^U-K_2^F$, the function $H$ may be monotonous or may change its monotony only once. Since $H(-K_2^U(1-\frac{d_2}{b_2}))<0<H(K_1^F(1-\frac{d_1}{b_1}))$, it implies that $H$ admits only one root. Then the solution is unique.
\end{proof}

\textit{Proof of Theorem \ref{BB}.}
The existence and uniqueness of the solution of \eqref{BB-ssamia} is a consequence of Proposition \ref{prop:stat}.
We are left to show that for any $x\in\IR$, we have $$\lim_{t\to +\infty} \omega(t,x) = \baromega(x)$$ where $\omega$ is a solution of \eqref{eq:omegat} with initial data $\omega^0$ verifying \eqref{eq:omegaini}.
We first notice that $\chi_\alpha^{0,F}(\cdot+x_\alpha)$, defined in Lemma \ref{lem:bubbleF} is a subsolution of the stationary equation. Indeed, on $(-L_\alpha^F-x_\alpha,L_\alpha^F-x_\alpha)$ it is clear by definition, on $\IR\setminus (-L_\alpha^F-x_\alpha,L_\alpha^F-x_\alpha)$ we have $f\big(x,-\max(K_2^F,K_2^U)(1-\frac{d_2}{b_2})\big)\geq 0$, and $(\chi_\alpha^{0,F})''(\pm L_\alpha^F) \geq 0$ in the sense of distribution.
Thus, if we denote $\chi_\alpha^F$ the solution of \eqref{eq:omegat} with initial data $\chi_\alpha^{0,F}(\cdot+x_\alpha)$, i.e.
$$
\partial_t \chi_\alpha^F - \partial_{xx} \chi_\alpha^F = f(x,\chi_\alpha^F), \qquad \chi_\alpha^F(0,x) = \chi_\alpha^{0,F}(x+x_\alpha),
$$
then for any $x\in\IR$, we have $t\mapsto \chi_\alpha^F(t,x)$ is nondecreasing. 
Indeed, we have  that $\partial_t \chi_\alpha^F(0,x) \geq 0$ and by the maximum principle, for all $t\geq 0$, $\partial_t \chi_\alpha^F(t,x) \geq 0$.

By the same manner, let us define $\chi_\beta^U$ the solution of
$$
\partial_t \chi_\beta^U - \partial_{xx} \chi_\beta^U = f(x,\chi_\beta^U), \qquad \chi_\beta^U(0,x) = \chi_\beta^{0,U}(x-y_\beta).
$$
Then, for any $x\in \IR$, $t\mapsto \chi_\beta^U(t,x)$ is nonincreasing.

Moreover, by assumption we have for any $x\in\IR$
$$
\chi_\alpha^{0,F}(x+x_\alpha) \leq \omega^0 \leq \chi_\beta^{0,U}(x-y_\beta).
$$
Hence by comparison principle, for any $t>0$ and any $x\in \IR$,
\begin{equation}
    \label{bornomega}
    \chi_\alpha^{F}(t,x) \leq \omega(t,x) \leq \chi_\beta^{U}(t,x).
\end{equation}

Finally, $t\mapsto \chi_\alpha^F(t,x)$ is nondecreasing and bounded, then it converges as $t$ goes to $+\infty$.
Necessarily the limit is a stationary solution. From Proposition \ref{prop:stat}, we deduce by uniqueness that 
$\lim_{t\to +\infty} \chi_\alpha^F(t,x) = \baromega(x)$ for any $x\in \IR$. 
By the same token, $t\mapsto \chi_\beta^U(t,x)$ is nonincreasing and bounded, then converges to $\baromega(x)$ for any $x\in\IR$. 
Finally, the proof is complete by using \eqref{bornomega}.
\qed

\subsection{Large time asymptotics when only one species is invasive}

Theorem \ref{BB} investigates the case where the two species are invasive in two different area, which correspond to the case $\gamma^F>0$ and $\gamma^U<0$ (these quantities being defined in \eqref{BBbb-ssamia}). 
In this part, we consider the case where one species is invasive everywhere. To fix the notation we consider that species 1 is invasive everywhere, i.e. we assume
\begin{equation}
    \label{assumption2}
    \gamma^F >0, \qquad \text{and } \quad \gamma^U>0.
\end{equation}
Following the strategy developed above, we may study the large time behaviour of the solution of \eqref{eq:omegat} in this setting. Since we have $\gamma^F>0$, the result of Lemma \ref{lem:bubbleF} is still available. However, Lemma \ref{lem:bubbleU} should be modified as below.
\begin{lemma}
    \label{lem:bubbleU2}
    Assume \ref{hyp:bd} holds. Let us assume that $\gamma^U>0$ and denote $\theta_1^U$ the unique real in $(0,K_1^U(1-\frac{d_1}{b_1})$ such that $F^U(\theta_1^U) = F^U\left(-K_2^U\left(1-\frac{d_2}{b_2}\right)\right)$. 
    Then for any $\alpha\in (\theta_1^U,K_1^U(1-\frac{d_1}{b_1}))$, there exists a family of bubbles, denoted $\chi_{1,\alpha}^{0,U}$, such that $\chi_{1,\alpha}^{0,U}$ is even, nonincreasing on $(0,+\infty)$ and
    $$
    \begin{cases}
        &-(\chi_{1,\alpha}^{0,U})'' = f^U(\chi_{1,\alpha}^{0,U}) \qquad \text{ on } (-L_{1,\alpha}^U,L_{1,\alpha}^U), \\[3mm]
        &\chi_{1,\alpha}^{0,U}(0) = \alpha, \quad \chi_{1,\alpha}^{0,U} (\pm L_{1,\alpha}^U) = -\max(K_2^U,K_2^F)\left(1-\dfrac{d_2}{b_2}\right), \\[3mm]
        &\chi_{1,\alpha}^{0,U}(x) = -\max(K_2^F,K_2^U)\left(1-\dfrac{d_2}{b_2}\right) \quad \text{ on } \IR \setminus (-L_{1,\alpha}^U,L_{1,\alpha}^U),
    \end{cases}
    $$
    where
    $$
    L_{1,\alpha}^U := \int_{-\max(K_2^F,K_2^U)(1-\frac{d_2}{b_2})}^\alpha \frac{dz}{\sqrt{2(F^U(\alpha)-F^U(z))}}. 
    $$
\end{lemma}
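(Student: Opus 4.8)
The plan is to rerun, essentially verbatim, the phase-plane construction from the proof of Lemma~\ref{lem:bubbleF}, with the ``wild'' data $(f^F,F^F,K_1^F)$ replaced by the ``urban'' data $(f^U,F^U,K_1^U)$: the only structural fact exploited in Lemma~\ref{lem:bubbleF} is that $\gamma^F>0$ forces $F^F$ to attain its \emph{global} maximum at $K_1^F(1-\frac{d_1}{b_1})$, and here the hypothesis $\gamma^U>0$ provides the analogous property for $F^U$ at $K_1^U(1-\frac{d_1}{b_1})$. First I would record the shape of $F^U$ (which is that of $F^F$ in Figure~\ref{fig:FF}, with $K_1^U$, $K_2^U$ in place of $K_1^F$, $K_2^F$): $F^U$ is increasing on $(0,K_1^U(1-\frac{d_1}{b_1}))$ since $f^U>0$ there, decreasing on $(-K_2^U(1-\frac{d_2}{b_2}),0)$, with an interior local maximum at $-K_2^U(1-\frac{d_2}{b_2})$ below which it decreases again; this gives the existence and uniqueness of $\theta_1^U\in(0,K_1^U(1-\frac{d_1}{b_1}))$ with $F^U(\theta_1^U)=F^U(-K_2^U(1-\frac{d_2}{b_2}))$, and the fact that $F^U(\alpha)>F^U(-K_2^U(1-\frac{d_2}{b_2}))$ for every $\alpha\in(\theta_1^U,K_1^U(1-\frac{d_1}{b_1}))$.

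Fix such an $\alpha$ and solve on $(0,+\infty)$ the Cauchy problem
\[ \chi'=-\sqrt{2\bigl(F^U(\alpha)-F^U(\chi)\bigr)},\qquad \chi(0)=\alpha . \]
Two verifications are needed, exactly as in Lemma~\ref{lem:bubbleF}: (i) the radicand $F^U(\alpha)-F^U(z)$ is strictly positive for $z\in\bigl(-\max(K_2^U,K_2^F)(1-\frac{d_2}{b_2}),\,\alpha\bigr)$, so that the solution is well defined and strictly decreasing until it first reaches $-\max(K_2^U,K_2^F)(1-\frac{d_2}{b_2})$; and (ii) the ``time'' $L_{1,\alpha}^U$ to reach that value — which is precisely the integral in the statement — is finite. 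For (i) one splits the interval at $0$ and at $-K_2^U(1-\frac{d_2}{b_2})$: on $(0,\alpha)$ monotonicity of $F^U$ gives $F^U(z)<F^U(\alpha)$; on $[-K_2^U(1-\frac{d_2}{b_2}),0]$ one has $F^U(z)\le F^U(-K_2^U(1-\frac{d_2}{b_2}))<F^U(\alpha)$; and on $[-\max(K_2^U,K_2^F)(1-\frac{d_2}{b_2}),-K_2^U(1-\frac{d_2}{b_2})]$ (non-empty only when $K_2^F>K_2^U$) the map $F^U$ decreases as its argument decreases below $-K_2^U(1-\frac{d_2}{b_2})$, whence again $F^U(z)\le F^U(-K_2^U(1-\frac{d_2}{b_2}))<F^U(\alpha)$. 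For (ii), the integrand $1/\sqrt{2(F^U(\alpha)-F^U(z))}$ has its only singularity at $z=\alpha$, where $F^U(\alpha)-F^U(z)\sim f^U(\alpha)\,(\alpha-z)$ with $f^U(\alpha)>0$ (an integrable $1/\sqrt{\alpha-z}$ singularity), and it is bounded near the lower endpoint by (i).

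With $\chi_{1,\alpha}^{0,U}$ so defined on $(0,L_{1,\alpha}^U)$, differentiating the conserved quantity $\frac12\bigl((\chi_{1,\alpha}^{0,U})'\bigr)^2+F^U(\chi_{1,\alpha}^{0,U})=F^U(\alpha)$ yields $-(\chi_{1,\alpha}^{0,U})''=f^U(\chi_{1,\alpha}^{0,U})$ on that interval; one then extends $\chi_{1,\alpha}^{0,U}$ to $(-L_{1,\alpha}^U,L_{1,\alpha}^U)$ by evenness and to $\IR$ by the constant $-\max(K_2^U,K_2^F)(1-\frac{d_2}{b_2})$, which is a zero of $f^U$, so the equation holds on $(-L_{1,\alpha}^U,L_{1,\alpha}^U)$ and the pieces match continuously at $\pm L_{1,\alpha}^U$. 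Evenness together with the sign $\chi'<0$ on $(0,L_{1,\alpha}^U)$ give that $\chi_{1,\alpha}^{0,U}$ is even and nonincreasing on $(0,+\infty)$, which is everything claimed.

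The computations are routine; the one point that genuinely differs from the case $\gamma^U<0$ of Lemma~\ref{lem:bubbleU}, and where I expect to have to be careful, is step (i) — checking that $F^U(\alpha)-F^U(z)$ stays positive all the way down to $-\max(K_2^U,K_2^F)(1-\frac{d_2}{b_2})$, which can lie strictly below the natural root $-K_2^U(1-\frac{d_2}{b_2})$ when $K_2^F>K_2^U$. This relies on the precise local behaviour of $F^U$ near its interior local maximum, together with the hypothesis $\gamma^U>0$.
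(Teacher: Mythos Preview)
Your proposal is correct and follows exactly the approach the paper intends: the paper does not prove Lemma~\ref{lem:bubbleU2} separately but relies on the proof of Lemma~\ref{lem:bubbleF}, and your argument is precisely that proof transposed to $(f^U,F^U,K_1^U)$, with the additional care (steps~(i) and~(ii)) that the paper leaves implicit. One small slip: the extension constant $-\max(K_2^U,K_2^F)(1-\tfrac{d_2}{b_2})$ is \emph{not} in general a zero of $f^U$ (when $K_2^F>K_2^U$ one has $f^U>0$ there), but this is irrelevant to the lemma, which only asserts the ODE on the open interval $(-L_{1,\alpha}^U,L_{1,\alpha}^U)$; the sign $f^U\ge 0$ at that constant is what matters later, when the bubble is used as a subsolution in Theorem~\ref{TH2}.
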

In this case, the main result reads as follows.
\begin{theorem}
    \label{TH2}
With the above notations, let us assume that \eqref{hyp:bd} and \eqref{assumption2} hold.
Let us assume that the initial data $\omega^0$ is such that, $\omega^0\in L^\infty(\IR)$ and there exist $\alpha\in (\theta^F,K_1^F(1-\frac{d_1}{b_1}))$, $x_\alpha > L_\alpha^F$, and $\alpha_1\in (\theta_1^U,K_1^U(1-\frac{d_1}{b_1}))$, $y_{\alpha_1}>L_{1,\alpha_1}^U$ such that, for all $x\in\IR$,
\begin{equation}
    \label{eq:omegaini1}
    \chi_\alpha^{0,F}(x+x_\alpha) + \chi_{1,\alpha_1}^{0,U}(x-y_{\alpha_1}) \leq \omega^0(x).
\end{equation}
Then, the solution of the equation \eqref{eq:omegat} converges as $t$ goes to $+\infty$, for any $x\in\IR$, towards the unique stationary solution $\baromega$ of the problem
\begin{align}\label{BB-ssamia1}
 \left\lbrace
\begin{array}{lcl}
 - \baromega'' = f(x,\baromega), \quad \text{ on } \IR, \\[2mm]
  \displaystyle\baromega(-\infty) = K_1^{F}\left(1 - \frac{d_1}{b_1}\right), \qquad 
  \displaystyle\baromega(+ \infty)  = K_1^{U}\left(1 - \frac{d_1}{b_1}\right).
 \end{array}
\right. 
\end{align}
Moreover, $\baromega$ is monotonous on $\mathbb{R}$.

\end{theorem}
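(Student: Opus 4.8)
The plan is to reproduce, step for step, the proof of Theorem~\ref{BB}, the only structural change being that under \eqref{assumption2} the potential $F^{U}$ now attains its global maximum at the positive equilibrium $K_1^{U}(1-\frac{d_1}{b_1})$ instead of at $-K_2^{U}(1-\frac{d_2}{b_2})$. So I would (i) prove the analogue of Proposition~\ref{prop:stat} for problem \eqref{BB-ssamia1}, and (ii) squeeze the solution of \eqref{eq:omegat} between two time-monotone solutions and pass to the limit.

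\textbf{Stationary analysis.} First I would record the $\gamma^{U}>0$ counterpart of Lemma~\ref{lem:omega_monotone_U}: if $\widetilde{\omega}$ is a bounded solution of $-\widetilde{\omega}''=f(x,\widetilde{\omega})$ with $\widetilde{\omega}(\widetilde{y})>\theta_1^{U}$ for some $\widetilde{y}>0$ ($\theta_1^{U}$ as in Lemma~\ref{lem:bubbleU2}), then $\lim_{x\to+\infty}\widetilde{\omega}(x)=K_1^{U}(1-\frac{d_1}{b_1})$; the proof is that of Lemma~\ref{lem:omega_monotone_F} with $F^{U}$ and its maximiser $K_1^{U}(1-\frac{d_1}{b_1})$ playing the roles of $F^{F}$ and $K_1^{F}(1-\frac{d_1}{b_1})$. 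Combined with Lemma~\ref{lem:omega_monotone_F}, this forces any bounded stationary solution exceeding $\theta_{F}$ somewhere on $\{x<0\}$ and $\theta_1^{U}$ somewhere on $\{x>0\}$ to solve \eqref{BB-ssamia1}, hence to be positive on all of $\IR$. For existence I would argue as in Proposition~\ref{prop:stat}: on $\{x>0\}$ the energy $\tfrac12(\baromega')^2+F^{U}(\baromega)$ is constant, equal to $F^{U}(K_1^{U}(1-\frac{d_1}{b_1}))$, which yields, for each $\underline{\omega}=\baromega(0)$, a well-defined monotone profile on $(0,+\infty)$ tending to $K_1^{U}(1-\frac{d_1}{b_1})$ at $+\infty$ (increasing or decreasing according to the position of $\underline{\omega}$), and symmetrically on $\{x<0\}$ with $F^{F}$ and the limit $K_1^{F}(1-\frac{d_1}{b_1})$. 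Matching the one-sided derivatives at $x=0$ reduces to solving $H(\underline{\omega})=0$, where
$$
H(\omega)=F^{F}(\omega)-F^{U}(\omega)-\Big(F^{F}\big(K_1^{F}(1-\tfrac{d_1}{b_1})\big)-F^{U}\big(K_1^{U}(1-\tfrac{d_1}{b_1})\big)\Big).
$$
Evaluating $H$ at the endpoints $K_1^{F}(1-\frac{d_1}{b_1})$ and $K_1^{U}(1-\frac{d_1}{b_1})$ and using \eqref{assumption2} produces a sign change, while $H'(\omega)=f^{F}(\omega)-f^{U}(\omega)=b_1\big(\frac{1}{K_1^{U}}-\frac{1}{K_1^{F}}\big)\omega_+^2+b_2\big(\frac{1}{K_2^{U}}-\frac{1}{K_2^{F}}\big)\omega_-^2$ keeps a constant sign on each half-line, so $H$ has a unique root. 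This gives existence and uniqueness of $\baromega$, and the energy identities show $\baromega'$ cannot vanish, so $\baromega$ is monotone on $\IR$ (constant precisely when $K_1^{F}=K_1^{U}$).

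\textbf{Convergence.} Exactly as in the proof of Theorem~\ref{BB}, each of the shifted bubbles $\chi_\alpha^{0,F}(\cdot+x_\alpha)$ and $\chi_{1,\alpha_1}^{0,U}(\cdot-y_{\alpha_1})$ is a weak subsolution of the stationary equation — it solves it on its support (which sits in $\{x<0\}$, resp. $\{x>0\}$, since $x_\alpha>L_\alpha^{F}$, resp. $y_{\alpha_1}>L_{1,\alpha_1}^{U}$), equals the constant $-\max(K_2^{F},K_2^{U})(1-\frac{d_2}{b_2})$ outside, where $f(x,\cdot)\ge 0$, and has convex corners — hence so is their maximum, which by \eqref{eq:omegaini1} lies below $\omega^{0}$. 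On the other side, the constant $M:=\max\{\|\omega^{0}\|_{L^\infty},K_1^{F}(1-\frac{d_1}{b_1}),K_1^{U}(1-\frac{d_1}{b_1})\}$ is a supersolution (there $f(x,M)\le 0$) above $\omega^{0}$. Let $\underline{w},\overline{w}$ solve \eqref{eq:omegat} from these two barriers; by the argument of Theorem~\ref{BB}, $t\mapsto\underline{w}(t,x)$ is nondecreasing, $t\mapsto\overline{w}(t,x)$ nonincreasing, both bounded, hence they converge pointwise to bounded stationary solutions $\underline{w}_\infty\le\overline{w}_\infty$. Since $\overline{w}\ge\underline{w}\ge\underline{w}(0,\cdot)$, both limits satisfy $\underline{w}_\infty,\overline{w}_\infty\ge\alpha>\theta_{F}$ at $x=-x_\alpha$ and $\ge\alpha_1>\theta_1^{U}$ at $x=y_{\alpha_1}$, so by the uniqueness of the stationary step they both equal $\baromega$; the comparison principle $\underline{w}(t,x)\le\omega(t,x)\le\overline{w}(t,x)$ then yields $\omega(t,x)\to\baromega(x)$ for every $x$.

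\textbf{Main difficulty.} The new analytical content is the stationary part: one must redo the phase-plane study of Proposition~\ref{prop:stat} in the configuration where $F^{U}$ peaks at the positive state $K_1^{U}(1-\frac{d_1}{b_1})$ — in particular prove the $\gamma^{U}>0$ version of Lemma~\ref{lem:omega_monotone_U} and handle the derivative matching at $x=0$ in both regimes $K_1^{F}\gtrless K_1^{U}$ (treating separately the degenerate case $K_1^{F}=K_1^{U}$). A secondary care point is to confirm that the constant supersolution flow $\overline{w}$ decreases all the way to $\baromega$ and not to a larger stationary solution; this is ensured precisely by the lower barrier, which keeps $\overline{w}$ above the ignition values $\theta_{F}$ and $\theta_1^{U}$ at the fixed abscissae $-x_\alpha$ and $y_{\alpha_1}$ for all $t$ — which is why the hypothesis \eqref{eq:omegaini1} needs both bubbles.
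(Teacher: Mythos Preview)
Your strategy mirrors the paper's proof almost verbatim: redo the phase-plane analysis of Proposition~\ref{prop:stat} with $F^{U}$ now maximised at $K_1^{U}(1-\frac{d_1}{b_1})$, then trap $\omega$ between a time-increasing lower solution launched from the two bubbles and a time-decreasing upper solution launched from a large constant. The stationary part and the constant supersolution are handled exactly as in the paper (the paper's constant is $\max\{\|\omega_0\|_\infty,K_1^F,K_1^U\}$, cruder than yours but serving the same purpose).

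There is, however, a concrete gap in your lower barrier. You take the \emph{maximum} of the two shifted bubbles and assert that ``by \eqref{eq:omegaini1}'' it lies below $\omega^{0}$. But \eqref{eq:omegaini1} is a hypothesis on the \emph{sum}. Since each bubble equals the same negative constant $-M:=-\max(K_2^{F},K_2^{U})(1-\frac{d_2}{b_2})$ off its bump and the two bump intervals are disjoint, one has pointwise
\[
\chi_\alpha^{0,F}(\cdot+x_\alpha)+\chi_{1,\alpha_1}^{0,U}(\cdot-y_{\alpha_1})
\;=\;\max\!\big(\chi_\alpha^{0,F}(\cdot+x_\alpha),\,\chi_{1,\alpha_1}^{0,U}(\cdot-y_{\alpha_1})\big)\;-\;M,
\]
so the sum is \emph{strictly smaller} than the maximum everywhere. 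Hence ``$\text{sum}\le\omega^{0}$'' does not give ``$\text{max}\le\omega^{0}$'': at $x=-x_\alpha$, for instance, the hypothesis allows $\omega^{0}=\alpha-M$, while the maximum there equals $\alpha>\omega^{0}$. Your comparison therefore cannot be launched from the maximum. The paper instead takes the \emph{sum itself} --- exactly the object appearing in \eqref{eq:omegaini1} --- as the initial datum of the lower solution $\chi$, invoking the disjointness of the bump supports to argue that this sum is a subsolution of the stationary equation; that is the barrier you need to use if you want the argument to match both the stated hypothesis and the paper's proof.
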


\begin{proof}
    The proof of this result follows the one of Theorem \ref{BB}. Therefore, we do not detail it but gives only the main changes. 

    Let us consider $\baromega$ a bounded stationary solution of 
    $$
    -\baromega'' = f(x,\baromega), \qquad \text{ on } \IR,
    $$
    such that there exists $y_F<0$ and $y_U>0$ such that $\theta_F<\baromega(y_F)$ and $\theta_U<\baromega(y_U)$. 
    If such a solution exists, since $\gamma^F>0$ and $\gamma^U>0$, we may apply Lemma \ref{lem:omega_monotone_F} to find that 
    $$
    \lim_{x\to -\infty} \baromega(x) = K_1^F\left(1-\frac{d_1}{b_1}\right), \qquad \text{and }\quad
    \lim_{x\to +\infty} \baromega(x) = K_1^U\left(1-\frac{d_1}{b_1}\right).
    $$
    Then, following the proof of Proposition \ref{prop:stat}, we may show similarly that there exists a unique stationary solution to \eqref{BB-ssamia1}.
    It shows then the existence and the uniqueness of a solution to the stationary problem such that there exists $y_F<0$ and $y_U>0$ such that $\theta_F<\baromega(y_F)$ and $\theta_U<\baromega(y_U)$. 

    To study the large time convergence, we proceed as above and remark that $\chi_\alpha^{0,F}(\cdot+x_\alpha)+\chi_{1,\alpha_1}^{0,U}(\cdot-y_{\alpha_1})$ is a subsolution of the stationary equation (after noticing that the compact supports of these two functions are disjoint). Then, if we denote $\chi$ the solution of
    $$
    \partial_t \chi - \partial_{xx} \chi = f(x,\chi), \qquad \chi(0,x) = \chi_\alpha^{0,F}(x+x_\alpha)+\chi_{1,\alpha_1}^{0,U}(x-y_{\alpha_1}).
    $$
    Then, for all $t\geq 0$, we have $\partial_t \chi(t,x)\geq 0$.
    For the supersolution, we take $\chi_1$ solution of
    $$
    \partial_t \chi_1 - \partial_{xx} \chi_1 = f(x,\chi_1), \qquad \chi_1(0,x) = \max\{\|\omega_0\|_\infty, K_1^F,K_1^U\}.
    $$
    Then, for any $t\geq 0$, we have $\partial_t \chi_1(t,x)\leq 0$. 

    Moreover, by assumption \eqref{eq:omegaini1}, we have
    $$
    \chi(0,x) \leq \omega^0(x) \leq \chi_1(0,x).
    $$
    Hence, by comparison principle, for any $t>0$ and $x\in\IR$, we have
    $$
    \chi(t,x) \leq \omega(t,x) \leq \chi_1(t,x).
    $$
    We conclude as in the previous section~: The subsolution is nondecreasing and bounded thus converges towards a stationary solution, since it has been proved that such a solution is unique, we deduce that the limit is $\baromega$. Similarly, the supersolution is nonincreasing and bounded thus converges to $\baromega$. 
\end{proof}

\subsection{Numerical illustrations}\label{sec:num}
In this subsection  we provide some numerical illustrations of our theoretical results. Then we discretize system \eqref{syst12} thanks to a semi-implicit finite difference method.
The domain is assumed to be given by $[-40,40]$ and the choice of the numerical parameters are freely adapted from \cite{Strugarek} and are given in Table \ref{tablenum} below.
Since we have to take a bounded domain for the numerical simulation, we impose Neumann boundary conditions at the boundary of the domain.

In order to illustrate the two possible scenarios of Theorem \ref{BB} and Theorem \ref{TH2}, we take different values for the carrying capacities~: In a first case, we make the following choice
$$
K_1^F = 10, \quad K_1^U = 1, \quad K_2^F = 1, \quad K_2^U = 10.
$$
In this situation, we have $\gamma^F>0$ and $\gamma^U<0$. The initial data are given by
$$
n_1^0 (x) = 2. \,\mathbf{1}_{[10,20]}, \qquad
n_2^0 (x) = 2.5 \,\mathbf{1}_{[-20,-10]}.
$$
The time dynamics is displayed in Figure \ref{fig:test1}. We observe that, as stated in Theorem \ref{BB}, species $n_1$ invades the region $\{x>0\}$, whereas $n_2$ invades the region $\{x<0\}$.
At final time each species is installed in a region and is segregated with the others species.
The density at final time is plotted in Figures \ref{fig:stat1} and \ref{fig:stat2}, which then represent the unique stationary solution as stated in Theorem \ref{BB}.

\begin{figure}[h!]
\centering\includegraphics[width=0.49\linewidth,height=5.7cm]{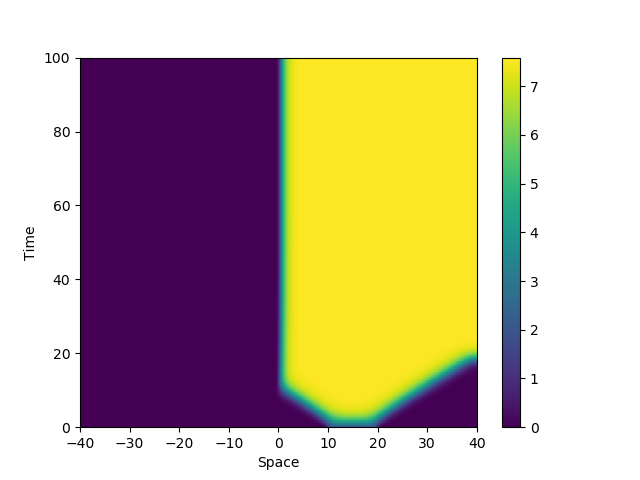} 
\includegraphics[width=0.49\linewidth,height=5.7cm]{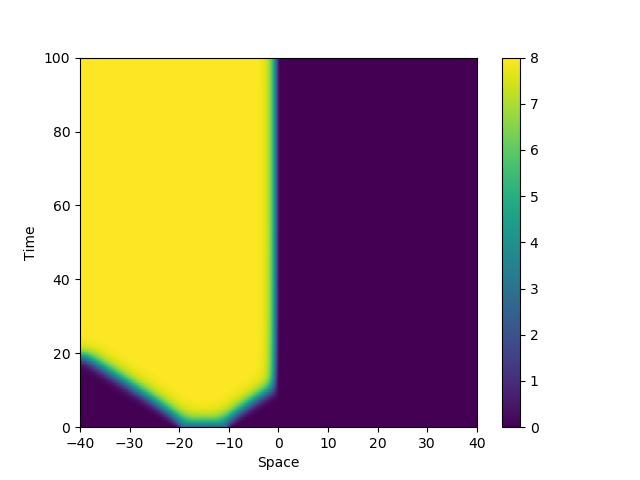}   
\caption{Time dynamics of functions $n_1$ (left) and $n_2$ (right) when conditions of Theorem \ref{BB} are satisfied, in particular $\gamma^F>0$ and $\gamma^U<0$ (see \eqref{BBbb-ssamia}. In this case, species $n_1$ and species $n_2$ are installed in two different regions of the domain.}
\label{fig:test1}
\end{figure}

In a second case, we make the following choice
$$
K_1^F = 10, \quad K_1^U = 7, \quad K_2^F = 1, \quad K_2^U = 5.
$$
In this situation, we have $\gamma^F>0$ and $\gamma^U>0$ then species $n_1$ is invasive in both region $\{x>0\}$ and $\{x<0\}$. We take the same initial data as above.
The numerical results are displayed in Figure \ref{fig:test2}. We observe that the species $n_1$ invades the region $\{x<0\}$ by pushing the species $n_2$. This is in line with the result stated in Theorem \ref{TH2}.

\begin{figure}[h!]
\begin{minipage}[C]{.46\linewidth}
\begin{center}
\includegraphics[scale=0.5]{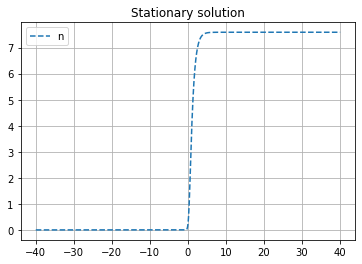}   \end{center}
\caption{Graph of the stationnary solution $n_1$ }
\label{fig:stat1}
\end{minipage} \hfill
\begin{minipage}[C]{.46\linewidth}
\begin{center}
 \includegraphics[scale=0.5]{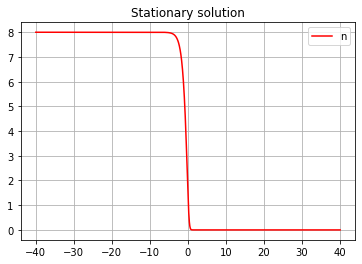}   
\end{center}
\caption{Graph of the stationnary solution $n_2$}
\label{fig:stat2}
\end{minipage} 
\end{figure}

\begin{figure}[h!]
\begin{center}
\includegraphics[width=0.49\linewidth,height=5.7cm]{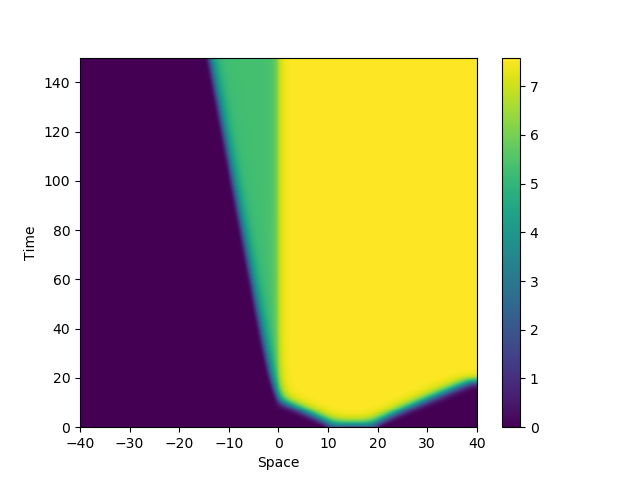}
\includegraphics[width=0.49\linewidth,height=5.7cm]{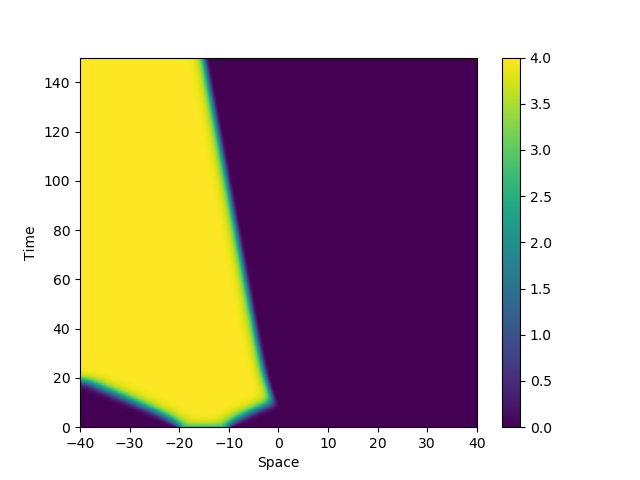}
\caption{Time dynamics of functions $n_1$ (left) and $n_2$ (right) when conditions of Theorem \ref{TH2} are satisfied, in particular $\gamma^F>0$ and $\gamma^U>0$ (see \eqref{BBbb-ssamia}. After invading the region $\{x>0\}$, the species $n_1$ invades the region $\{x<0\}$.}
\end{center}
\label{fig:test2}
\end{figure}

\section{Introducing Wolbachia}\label{sec:Wol}
\subsection{Mathematical model}

In this section, we consider the second model proposed in the introduction \eqref{B-samia}. 
We will use the notations
$$
\mathbf{n}=\left(n_{1}, n_{2} ,n_{3}\right), \quad \text { and } \quad \mathbf{f}=\left(f_{1}, f_{2}, f_{3}\right).
$$
To denote, respectively, the density mosquitoes vector and the right-hand-side functions vector in \eqref{B-samia}.
We first consider the homogeneous situation for which we may assume that $n_1$, $n_2$ and $n_3$ does not depend on the space variable. Then system \eqref{B-samia} simplifies into the following ODE system,
\begin{equation}\label{ACA}
\left\{
   \begin{array}{l}
       \displaystyle n_1' = b_1 n_1\left(1-\frac{n_1}{K_1}\right) - c n_1\, (n_2 + n_3) - d_1 n_1, \\
       \displaystyle n_2' = b_2 n_2 \frac{n_2}{n_2 + n_3}\left(1-\frac{n_2 + n_3}{K_2}\right) - c n_1\, n_2 - d_2 n_2,
       \\
       \displaystyle n_3' = b_3 n_3\left(1-\frac{n_2 + n_3}{K_2}\right) - c n_1\, n_3 - d_3 n_3.
   \end{array}
\right.
\end{equation}
Complemented with nonnegative initial data $n_1(t=0) = n_{1}^{0}$, $n_2(t=0) = n_{2}^{0}$, $n_3(t=0) = n_{3}^{0}$.
Notice that this system in the particular case $n_1=0$ has been studied in \cite{SIMA}.
By a direct application of the Cauchy-Lipschitz theorem, there exists an unique global solution
of this Cauchy problem. Moreover, this solution is non-negative. The Jacobian
associated to the right-hand side of the  above system reads: 
\begin{align*}
& \operatorname{Jac(n_1, n_2, n_3})=  \\
& \begin{pmatrix}
b_1 \left(1-\dfrac{2 n_1}{K_1}\right)- c (n_2 + n_3)-d_1  & -c n_1 & -c n_1 \\[2mm] 
- c n_2 &  b_2 - \dfrac{b_2 n_3^2}{(n_2+n_3)^2}-\dfrac{2 b_2 n_2^2}{K_2(n_2+n_3)}- c n_1-d_2 &  \dfrac{-b_2 n_2^2}{(n_2+n_3)^2} \\[4mm]
- cn_3 & \dfrac{-b_3 n_3}{K_2} & b_3\left(1 - \dfrac{n_2 + 2 n_3}{K_2}\right) - cn_1 - d_3 
\end{pmatrix}.
\end{align*}
Then, the following Lemma provides the equilibria and their stability 
\begin{lemma}\label{14} 
 Let us assume that 
\begin{equation}\label{hypbidi}
b_1>d_1,\quad b_2>d_2,\quad b_3>d_3,\quad b_2 > b_3, \quad d_3 > d_2. 
\end{equation}
Then system $\eqref{ACA}$ admits the  following non-negative steady states:
$$
(0,0,0),\left(n_1^*, 0,0 \right),\left(0, n_2^*,0\right), \left( 0,0, n_3^*, \right) \quad \text { with } \quad n_1^*=K_1\left(1-\frac{d_1}{b_1}\right),\quad n_i^*=K_2\left(1-\frac{d_i}{b_i}\right), i \in\{2,3\} .
$$
In this case, $(0, 0, 0)$
is  (locally linearly) unstable.

Moreover, if
\begin{align}\label{1002}
c>\max \left\{\frac{b_1}{K_1}\left(\frac{b_2-d_2}{b_1-d_1}\right), \frac{b_2}{K_2}\left(\frac{b_1-d_1}{b_2-d_2}\right), \frac{b_3}{K_2}\left(\frac{b_2-d_2}{b_3-d_3}\right)\right\}.
\end{align}
In this case, the two steady states  $\left(n_1^*, 0,0 \right)$ , $\left(0, n_2^*,0\right)$ and $ \left( 0,0, n_3^*, \right)$ are (locally asymptotically ) stable.

Moreover, there exist other distinct non-negative states $(\Bar{n_{21}}, \Bar{n_{12}},0) , (\Bar{n_{31}}, 0 , \Bar{n_{13}}),(0 , \Bar{n_{32}}, \Bar{n_{23}}), (\Bar{n_{41}},\Bar{n_{22}} , \Bar{n_{42}}- \Bar{n_{22}} )$,  where 
\begin{align}\label{23}
\Bar{n_{21}} &= \left( \frac{c(b_2-d_2)- \frac{b_2}{K_2}(b_1-d_1)}{c^2 - \frac{b_1 b_2}{K_1 K_2}}\right)     \quad ,  \Bar{n_{12}}  = \left( \frac{c(b_1-d_1)- \frac{b_1}{K_1}(b_2-d_2)}{c^2 - \frac{b_1 b_2}{K_1 K_2}}\right)  \\ 
 \Bar{n_{31}} &= \left( \frac{c(b_3-d_3)- \frac{b_3}{K_2}(b_1-d_1)}{c^2 - \frac{b_1 b_3}{K_1 K_2}}\right)    \quad,  \Bar{n_{13}}  = \left( \frac{c(b_1-d_1)- \frac{b_1}{K_1}(b_3-d_3)}{c^2 - \frac{b_1 b_3}{K_1 K_2}}\right)  \nonumber \\ 
\Bar{n_{32}} &= \left(\frac{d_2}{b_2} \frac{K_2(b_3-d_3)}{d_3 } \right)   \quad,  \Bar{n_{23}} = \left( K_2 \left(1-\frac{d_3}{b_3}\right)- \frac{d_2}{b_2} \frac{K_2(b_3-d_3)}{d_3 } \right)\nonumber  \\   
 \Bar{n_{22}} &= \frac{K_2}{b_2}\times \frac{ \bigg[\frac{b_1}{K_1}(b_3-d_3)-c(b_1-d_1)\bigg]\bigg[\frac{b_1b_3}{K_1 K_2}d_2 + c \frac{b_3}{K_2}(b_1-d_1)-c^2(b_3+d_2-d_3)\bigg]} {\bigg({\frac{b_1 b_3}{K_1 K_2}- c^2\bigg)\bigg(K_2\bigg(\frac{b_1b_3}{K_1K_2}- c^2\bigg)-\frac{b_1}{K_1}(b_3-d_3)+c(b_1-d_1)\bigg)}} \nonumber  \\
 \Bar{n_{41}} &= \left( \frac{ \frac{b_3}{K_2}(b_1-d_1)-c(b_3-d_3)}{ \frac{b_1 b_3}{K_1 K_2}- c^2 }\right) \quad,  \Bar{n_{42}} = \left( \frac{ \frac{b_1}{K_1}(b_3-d_3)-c(b_1-d_1)}{ \frac{b_1 b_3}{K_1 K_2}- c^2 }\right) \nonumber
\end{align}
 whereas 
  $(\Bar{n_{21}}, \Bar{n_{12}},0) , (\Bar{n_{31}}, 0 , \Bar{n_{13}}),(0 , \Bar{n_{32}}, \Bar{n_{23}}), $ and $(\Bar{n_{41}},\Bar{n_{22}} , \Bar{n_{42}}- \Bar{n_{22}} )$
are locally linearly unstable.
\end{lemma}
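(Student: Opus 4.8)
I would organise the proof in four stages: (i) find all nonnegative steady states by case analysis on which components of $\mathbf{n}$ vanish; (ii) settle the stability of the three ``axial'' states; (iii) check existence and positivity of the mixed states under \eqref{1002}; (iv) prove linear instability of all the mixed states. For stage (i), setting $\mathbf{f}(\mathbf{n})=0$ and distinguishing cases: if $n_1=0$ the last two equations decouple into the Wolbachia subsystem of \cite{SIMA}, yielding $(0,0,0)$, $(0,n_2^*,0)$, $(0,0,n_3^*)$ and the coexistence state $(0,\overline{n_{32}},\overline{n_{23}})$ (solve $b_3(1-\tfrac{n_2+n_3}{K_2})=d_3$, then $b_2\tfrac{n_2}{n_2+n_3}(1-\tfrac{n_2+n_3}{K_2})=d_2$; positivity of $\overline{n_{23}}$ is exactly where $b_2d_3>b_3d_2$, a consequence of \eqref{hypbidi}, is used). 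If $n_2=0$, $n_1\neq 0$, the first and third equations form a competitive two-species system of the form \eqref{EDO12} with species $3$ in place of $2$, giving $(n_1^*,0,0)$, $(0,0,n_3^*)$ and, by Cramer's rule, $(\overline{n_{31}},0,\overline{n_{13}})$. If $n_3=0$, $n_1\neq 0$, we recover \eqref{EDO12} itself, hence $(\overline{n_{21}},\overline{n_{12}},0)=(\overline{n_1},\overline{n_2},0)$. Finally, summing the second and third equations in the fully interior case shows $(n_1,\,n_2+n_3)$ solves the same $2\times 2$ linear system as $(\overline{n_{31}},\overline{n_{13}})$, whence $\overline{n_{41}}=\overline{n_{31}}$, $\overline{n_{42}}=\overline{n_{13}}$, and $\overline{n_{22}}$ is then recovered from the second equation. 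The needed invertibility ($c^2>\tfrac{b_1b_2}{K_1K_2}$, and then $c^2>\tfrac{b_1b_3}{K_1K_2}$ using $b_2>b_3$) follows from \eqref{1002} exactly as in the proof of Lemma \ref{11}.

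\textbf{Stability of the axial states.} Instability of $(0,0,0)$ is immediate: whatever the (ill-defined) lower-right entries of $\operatorname{Jac}$ at the origin, its first row is $(b_1-d_1,0,0)$, so $b_1-d_1>0$ is an eigenvalue. At $(0,n_2^*,0)$ and $(0,0,n_3^*)$ the Jacobian \emph{is} well defined since $\tfrac{n_2}{n_2+n_3}$ is regular there, and it is block lower-triangular: at $(0,n_2^*,0)$ the eigenvalues are $b_1-d_1-cn_2^*$, $-(b_2-d_2)$ and $b_3\tfrac{d_2}{b_2}-d_3$; at $(0,0,n_3^*)$ they are $b_1-d_1-cn_3^*$, $-d_2$ and $-(b_3-d_3)$. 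Under \eqref{hypbidi}–\eqref{1002} all are negative (e.g. $b_1-d_1-cn_2^*<0$ is the second term of \eqref{1002}, and $b_3d_2<b_2d_2<b_2d_3$ gives $b_3\tfrac{d_2}{b_2}-d_3<0$), so both states are locally asymptotically stable. The point $(n_1^*,0,0)$ is more delicate because the Jacobian is \emph{not defined} there (the ratio $\tfrac{n_2}{n_2+n_3}$ is singular), so classical linearisation is unavailable. Here I would use a direct trapping/comparison argument: in a small neighbourhood of $(n_1^*,0,0)$ one has $n_2'\leq (b_2-cn_1-d_2)n_2$ and $n_3'\leq (b_3-cn_1-d_3)n_3$, and \eqref{1002} forces $cn_1^*>b_2-d_2>b_3-d_3$, so both $n_2$ and $n_3$ decay exponentially while $n_1$ stays trapped near $n_1^*$ by the logistic equation; hence local asymptotic stability.

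\textbf{Instability of the mixed states.} For the three states with exactly two nonzero components I would exploit the invariant coordinate hyperplanes. The hyperplane $\{n_3=0\}$ is invariant, the restriction there is precisely \eqref{EDO12}, and Lemma \ref{11} gives that its coexistence Jacobian has negative determinant, hence a positive eigenvalue; since the full $3\times 3$ Jacobian at $(\overline{n_{21}},\overline{n_{12}},0)$ is block triangular along $\{n_3=0\}$, that eigenvalue persists, so the state is linearly unstable. The identical argument on $\{n_2=0\}$ (a copy of \eqref{EDO12} with species $3$) disposes of $(\overline{n_{31}},0,\overline{n_{13}})$, and on $\{n_1=0\}$ disposes of $(0,\overline{n_{32}},\overline{n_{23}})$, whose restricted system is the bistable Wolbachia model of \cite{SIMA} for which the interior equilibrium is a saddle. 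The genuinely new case is the fully interior state $(\overline{n_{41}},\overline{n_{22}},\overline{n_{42}}-\overline{n_{22}})$, where no such reduction applies: I would compute $\det\!\bigl(\operatorname{Jac}\bigr)$ at this point, simplifying the entries systematically with the equilibrium identities $n_1=\overline{n_{31}}$, $n_2+n_3=\overline{n_{13}}$ and the stationary relations for $n_2,n_3$, and show it is positive. Since the product of the three eigenvalues then has a fixed positive sign, at least one eigenvalue has strictly positive real part, giving linear instability.

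\textbf{Expected main obstacle.} The two places where real work is needed, beyond bookkeeping with the Jacobian, are: (a) the stability of $(n_1^*,0,0)$, for which linearisation genuinely fails because of the singular ratio $\tfrac{n_2}{n_2+n_3}$, so one must argue by an invariant-region/comparison estimate as above; and (b) the instability of the interior equilibrium, which requires an honest (if elementary) evaluation of a $3\times 3$ determinant at a point with cumbersome coordinates. Keeping that computation under control—by substituting the stationary relations before expanding, rather than after—will be the bulk of the effort.
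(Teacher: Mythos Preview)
Your plan follows essentially the same architecture as the paper's proof: case analysis on which components vanish, reduction of the two-component mixed states to the $2\times 2$ competitive system of Lemma~\ref{11} via the block-triangular structure of the Jacobian on the invariant hyperplanes, and explicit Jacobian computations at the axial states. A few points of comparison are worth noting. For $(n_1^*,0,0)$, where the ratio $n_2/(n_2+n_3)$ is singular, the paper does \emph{not} use a trapping argument as you do; it instead computes the directional limit $\lim_{t\to 0}t^{-1}\mathbf{f}(n_1^*+th_1,th_2,th_3)$ and observes that the resulting (direction-dependent) diagonal ``matrix'' has all entries negative under \eqref{1002}. Your comparison argument is the more standard route and avoids the question of what a direction-dependent linearisation actually proves. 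For $(0,\overline{n_{32}},\overline{n_{23}})$, rather than appealing to \cite{SIMA}, the paper computes the $2\times 2$ block determinant by hand and checks it is negative; your appeal to the known saddle structure of the Wolbachia interior equilibrium is equivalent and shorter. Finally, for the fully interior state $(\overline{n_{41}},\overline{n_{22}},\overline{n_{42}}-\overline{n_{22}})$ the paper in fact \emph{gives no argument at all}---it simply declares the proof complete after treating $(0,\overline{n_{32}},\overline{n_{23}})$---so your proposed determinant computation there goes beyond what the paper supplies.
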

\begin{proof}
Equilibria may be computed directly by solving  the system,
$$
\left\{\begin{array}{l}
0=b_1 \overline{n_1}\left(1-\dfrac{\overline{n_1}}{K_1}\right)-c \overline{n_1} ( \overline{n_2} + \overline{n_3})-d_1 \overline{n_1}, \\[4mm]
0=b_2 \overline{n_2} \dfrac{\overline{n_2} }{\overline{n_2} + \overline{n_3}}\left(1-\dfrac{\overline{n_2}+ \overline{n_3}}{K_2}\right)-c \overline{n_1}\, \overline{n_2}-d_2 \overline{n_2}, \\[4mm]
0=b_3 \overline{n_3}\left(1-\dfrac{ \overline{n_2} + \overline{n_3}}{K_2}\right)-c \overline{n_1} \,\overline{n_3}-d_3 \overline{n_3}.
\end{array}\right.
$$
Computing the Jacobian at the equilibria we get. At $(0,0,0)$, we compute the directional limit in direction $(h, k, l)$ as
$$ 
\lim _{t \rightarrow 0} \frac{ \mathbf{f}(t h, t k, tl)}{t}=\left(\begin{array}{c} \left(b_1 - d_1 \right)  h \\
\left( b_{2}  \frac{ k}{ k+ l} -d_{2}\right)  k \\
\left( b_{3} -d_{3} \right)  l 
\end{array}\right)
$$
and in particular we find that the direction  $ (0, 1, 0)$ is unstable. 

\ 

For the equilibrium $\left(n_1^*, 0,0\right)$, we compute for $h_1\in \RR$, $h_2>0$, $h_3>0$ the limit 
 $$
 \lim _{t \rightarrow 0}\frac{
 \mathbf{f}( n_1^* + t h_1, t h_2, th_3)- \mathbf{f}(n_1^*,0,0)}{t} =\left(\begin{array}{c} \left( b_1 (1 - \frac{ 2 n_1^*}{K_1}) - d_1 \right)h_1 \\
\left( b_{2}\frac{h_2}{h_2+h_3} - c n_1^* -d_{2}\right)  h_2 \\
\left( b_{3}- c n_1^* -d_{3} \right)  h_3 
\end{array}\right) = D \begin{pmatrix} h_1 \\ h_2 \\ h_3  \end{pmatrix}
  $$
  where 
  \begin{align*}
  D & = \begin{pmatrix}
      b_1 (1 - \frac{ 2 n_1^*}{K_1}) - d_1  & 0 & 0  \\
      0 & b_{2}\frac{h_2}{h_2+h_3} - c n_1^* -d_{2} & 0 \\
      0 & 0 & b_{3}- c n_1^* -d_{3}
  \end{pmatrix}.
  \end{align*}
  With assumption \eqref{1002}, we have  $b_i-d_i-c n_j^* < 0$. Hence, $D$ is a diagonal matrix with negative eigenvalue. It implies that  $(n_1^*, 0, 0)$ is locally asymptotically stable. 
  
We have also
$$
\operatorname{Jac}\mathbf{(0,n_2^*,0)}=
\begin{pmatrix}
b_1 -d_1 - c n_2^*  & 0 & 0 \\[2mm] 
-c n_2^* & b_2 -d_2 -  \dfrac{2b_2}{K_2}n_2^* & -b_2 \\[4mm]
0 & 0 & b_3 \left(1- \dfrac{n_2^*}{K_2}\right) - d_3  
\end{pmatrix}
$$
and 
$$
\operatorname{Jac}\mathbf{(0,0, n_3^*)}=
\begin{pmatrix}
b_1 -d_1 - c n_3^*  & 0 & 0 \\[2mm] 
0 &  -d_2 & 0 \\[4mm]
-c n_3^* & - \dfrac{ b_3 n_3^*}{K_2} & b_3 - d_3 - \dfrac{ 2 b_3 n_3^*}{K_2} 
\end{pmatrix}.
$$
We have  using with assumption \eqref{hypbidi}, for $i,j\in \{1,2,3\}$, $b_i-d_i-\dfrac{2b_i}{K_i} n_i^* = d_i-b_i < 0$ \, and \, $b_3(1-\frac{n_2^*}{K_2}) - d_3 = (b_3d_2-d_3b_2)/b_2 <0$.
Using assumption \eqref{1002}, we also get $$b_i-d_i-c n_j^* = b_i - d_i - c K_j \dfrac{b_j-d_j}{b_j}<0.$$
The linear stability of  $ \mathbf{(0,n_2^*,0)}$, and $\mathbf{(0,0, n_3^*)}$ follows.  
Then, we have
\begin{align*}
&\operatorname{Jac(\Bar{n_{21}}, \Bar{n_{12}},0})= \\
& \begin{pmatrix}
b_1 \left(1-\frac{2 \Bar{n_{21}}}{K_1}\right)- c \Bar{n_{12}} -d_1  & -c \Bar{n_{21}} & -c \Bar{n_{21}} \\[2mm] 
- c \Bar{n_{12}} & b_2 (1-\frac{2  \Bar{n_{12}}}{K_2})- c \Bar{n_{21}}-d_2 & -b_2  \\[4mm]
0 & 0 & b_3\left(1 - \frac{\Bar{n_{12}} }{K_2}\right) - c\Bar{n_{21}} - d_3
\end{pmatrix}
\end{align*}
and 
\begin{align*}
&\operatorname{Jac(\Bar{n_{31}}, 0 , \Bar{n_{13}}})= \\ 
& \begin{pmatrix} 
b_1 \left(1-\frac{2 \Bar{n_{31}}}{K_1}\right)- c   \Bar{n_{13}}-d_1  & -c \Bar{n_{31}} & -c \Bar{n_{31}}\\[2mm] 
0 &  - c \Bar{n_{31}}-d_2 & 0 \\[4mm]
- c\Bar{n_{13}} & \frac{-b_3 \Bar{n_{13}}}{K_2} & b_3\left(1 - \frac{ 2 \Bar{n_{13}}}{K_2}\right) - c\Bar{n_{31}}- d_3 
 \end{pmatrix}.
\end{align*}
 From the proof of  Lemma \ref{11} 
 we obtain that the point $(\Bar{n_{21}}, \Bar{n_{12}},0)$  and $(\Bar{n_{31}}, 0 , \Bar{n_{13}})$ are (locally linearly) unstable. Indeed, $\operatorname{det}(\operatorname{Jac(\Bar{n_{31}}, 0 , \Bar{n_{13}}})) = -(c\Bar{n_{31}}+d_2) \times  \operatorname{det}\left(\operatorname{Jac}(\overline{n_1}, \overline{n_2})\right) >0 ,(\Bar{n_{31}}>0$ and $ \operatorname{det}\left(\operatorname{Jac}(\overline{n_1}, \overline{n_2})\right) <0 $ ( according to condition \eqref{1002}), it implies the unstability of $(\Bar{n_{31}}, 0 , \Bar{n_{13}})).$\\
Finally, we compute
$$
\begin{aligned}
&\operatorname{Jac(0 , \Bar{n_{32}}, \Bar{n_{23}}}) = \\
& \begin{pmatrix}
b_1 - c (\Bar{n}_{32} + \Bar{n}_{23})- d_1  & 0 & 0 \\[2mm] 
- c \Bar{n}_{32} & b_2 \left(1 -\frac{\Bar{n}_{23}}{\Bar{n}_{23}+\Bar{n}_{32}} + \frac{\Bar{n}_{32}\Bar{n}_{23}}{(\Bar{n}_{32}
+\Bar{n}_{23})^2}\right)-\frac{2 b_2 \Bar{n}_{32}^2}{K_2(\Bar{n}_{32}+\Bar{n}_{23})}-d_2 &  \frac{- b_2 \Bar{n}_{32}^2}{(\Bar{n}_{32}+\Bar{n}_{23})^2}
 \\[4mm]
- c \Bar{n}_{23} & \frac{-b_3 \Bar{n}_{23}}{K_2} & b_3\left(1 - \frac{\Bar{n}_{32} + 2 \Bar{n}_{23}}{K_2}\right)  - d_3 
& \end{pmatrix}.
\end{aligned}
$$
A straightforward computation yield 
$$
\begin{aligned}
&\operatorname{det} (\operatorname{Jac(0 , \Bar{n_{32}}, \Bar{n_{23}}}) ) & = (b_1 - c (\Bar{n}_{32} +\Bar{n}_{23})-d_1) ) \times  \operatorname{det} \left|\begin{array}{cc}
a_{22}   & a_{23}  \\ \\
a_{32}  & a_{33} 
\end{array}\right|
\\
\end{aligned}
$$
with $\Bar{n_{23}}$ and $\Bar{n_{32}}$ defined in $\eqref{23}$. Since  $ \Bar{n_{23}} + \Bar{n_{32}} = K_2 \alpha_3 $ with $\alpha_3 = \left(1-\frac{d_3}{b_3}\right)$,  then   with assumption   \eqref{1002} we get 
\begin{align*}
 a_{11} & =(b_1 - c (\Bar{n_{32}}+ \Bar{n_{23}})-d_1) = b_1 - d_1 - c \frac{K_2}{b_3}(b_3 - d_3),\\
 a_{22} & = d_2 \left( 1 - \frac{d_2 }{b_2 (1 -  \alpha_3)^2}\right) , \quad
 a_{23}  = - \frac{d_2^2 }{b_2 (1 -  \alpha_3)^2} , \\
 a_{32} & = - b_3 \alpha_3 \left( 1 -  \frac{d_2}{b_2(1 -  \alpha_3)}\right), \quad 
 a_{33}  = b_3 \alpha_3 \left( -1 +  \frac{d_2}{b_2(1 -  \alpha_3)}\right).
 \end{align*}
Clearly,  we have $ a_{33} = a_{32}$ and $a_{22} = d_2 + a_{23} $, then 
$ \operatorname{det} (\operatorname{Jac(0 , \Bar{n_{32}}, \Bar{n_{23}}}) )  = a_{11} \times  \operatorname{det}(\operatorname{Jac \left(a_{22}  a_{33}  -  a_{23} a_{32}  \right))} = a_{33}  \left(a_{22} -   a_{23} \right) = a_{33} d_2 $, and the term 
\begin{align}
 \Bar{n_{23}} &  = K_2 \alpha_3 - \frac{d_2}{b_2} \frac{K_2(b_3-d_3)}{d_3 } \nonumber \\
 & =  K_2 \alpha_3 \left ( 1 -  \frac{d_2}{b_2(1 -  \alpha_3)}\right) >0 \nonumber\\
 & \Rightarrow \left(\frac{d_2}{b_2(1 -  \alpha_3)}\right) < 1.
\end{align}
Then $  \operatorname{det}(\operatorname{Jac \left(a_{22} \, a_{33} \, -  a_{23} \, a_{32} \,  \right))}<0 $, this implies that $ \operatorname{det} (\operatorname{Jac(0 , \Bar{n_{32}}, \Bar{n_{23}}}) ) >0.  $ This implies  the unstability for $(0 , \Bar{n_{32}}, \Bar{n_{23}})$, and  the proof of Lemma \ref{14} is complete.
\end{proof}
\subsection{Formal reduction in the strong competition setting}

We follow the strategy of the previous section and consider now the case with spatial diffusion, in the strong competition setting. 
Moreover, we are interested in the case of heterogeneous environment and assume as above that $K_1(x) = K_1^F \mathbf{1}_{\{x<0\}}+K_1^U \mathbf{1}_{\{x>0\}}$ and $K_2(x) = K_2^F \mathbf{1}_{\{x<0\}}+K_2^U \mathbf{1}_{\{x>0\}}$.
 Setting $c=\frac{1}{\varepsilon}\to +\infty$, the spatial model of \textit{Wolbachia} replacement technique reads
\begin{subequations}\label{systW}
\begin{equation}\label{systW:eq1}
\partial_t n_1^\varepsilon - \partial_{xx} n_1^\varepsilon = b_1 n_1^\varepsilon\left(1-\dfrac{n_1^\varepsilon}{K_1(x)}\right) - \frac{1}{\varepsilon} n_1^\varepsilon (n_2^\varepsilon+n_3^\varepsilon) - d_1 n_1^\varepsilon,
\end{equation}
\begin{equation}\label{systW:eq2}
\partial_t n_2^\varepsilon - \partial_{xx} n_2^\varepsilon = b_2 n_2^\varepsilon \frac{n_2^\varepsilon}{n_2^\varepsilon+n_3^\varepsilon}\left(1-\dfrac{n_2^\varepsilon+n_3^\varepsilon}{K_2(x)}\right) - \frac{1}{\varepsilon} n_1^\varepsilon n_2^\varepsilon - d_2 n_2^\varepsilon,
\end{equation}
\begin{equation}\label{systW:eq3}
\partial_t n_3^\varepsilon - \partial_{xx} n_3^\varepsilon = b_3 n_3^\varepsilon \left(1-\dfrac{n_2^\varepsilon+n_3^\varepsilon}{K_2(x)}\right) - \frac{1}{\varepsilon} n_1^\varepsilon n_3^\varepsilon - d_3 n_3^\varepsilon.
\end{equation}
\end{subequations}
 Formally, when $\varepsilon\to 0$, we get $n_1^\varepsilon n_2^\epsilon \to 0$ and $n_1^\varepsilon n_3^\epsilon \to 0$. Hence, if we assume that $n_i^\varepsilon$ converges, at least formally, towards $n_i$, $i\in\{1,2,3\}$, then the supports of $n_1$ and $n_2+n_3$ are disjoints. Following the ideas in Section \ref{sec:reduction}, we introduce the new variables 
 $$
 N = n_2 + n_3, \qquad \omega = n_1 - N, \qquad p = \frac{n_3}{N}.
 $$
 Here $p$ is the fraction of infected mosquitoes, which is defined only when $N>0$, then $0\leq p \leq 1$.
 At the formal limit we get $n_1= \omega_+$ and $N=\omega_-$.
 After straightforward computations, we deduce the equation verified by $\omega$
 \begin{equation}
    \label{eq:omega2}
    \partial_t \omega - \partial_{xx} \omega = g(x,p,\omega),
 \end{equation}
 where 
 \begin{equation}
    \label{eq:gpomega}
    g(x,p,\omega) = b_1 \omega_+\left(1-\dfrac{\omega_+}{K_1(x)}\right) - d_1 \omega_+ - \omega_- \left(\big(b_2(1-p)^2+b_3 p\big) \left(1-\frac{\omega_-}{K_2(x)}\right)-d_2(1-p)-d_3 p\right).
 \end{equation}
 This is coupled with the equation on the proportion, on the set $\{\omega<0\}$,
 $$
 \partial_t p - 2\partial_x p \,\partial_x \ln(\omega_-) - \partial_{xx} p = p(1-p) \left( \big(b_3-b_2(1-p) \big) \left(1-\frac{\omega_-}{K_2(x)}\right) + d_2 -d_3\right).
 $$
We observe immediately that $p=0$ and $p=1$ are steady state solutions to this equation. When $p=0$ the population $n_3$ is absent and we have to deal only with the interaction between the populations $n_1$ and $n_2$. When $p=1$, the population $n_2$ is absent and, in this case, the model describes the interaction between populations $n_1$ and $n_3$. 
We will consider these two cases independently.
We first introduce the following notations~:
\begin{equation}
 \label{notation3}
    \left\{
    \begin{array}{ll}
    g_{12}^F (\omega) = g(x,0,\omega) \mathbf{1}_{\{x<0\}}, \quad & g_{12}^U (\omega) = g(x,0,\omega) \mathbf{1}_{\{x>0\}}, \\[3mm]
    g_{13}^F (\omega) = g(x,1,\omega) \mathbf{1}_{\{x<0\}}, \quad & g_{13}^U (\omega) = g(x,1,\omega) 
    \mathbf{1}_{\{x>0\}}, \\[3mm]
    G_{1i}^F(\omega) = \ds{\int_0^\omega g_{1i}^F(\zeta) \,d\zeta}, \quad & G_{1i}^U(\omega) = \ds{\int_0^\omega g_{1i}^U(\zeta) \,d\zeta}, \quad i\in \{2,3\}.
    \end{array}
    \right.
\end{equation}
Notice that with the notations in \eqref{ccc}, we have $g_{12}^F = f^F$ and $g_{12}^U = f^U$.
As in \eqref{BBbb-ssamia}, we introduce also the constants
\begin{equation}
    \label{gamma3}
    \left\{
    \begin{array}{l}
    \gamma_{1i}^F := G_{1i}^F\left(K_1^F\left(1-\frac{d_1}{b_1}\right)\right) - G_{1i}^F\left(-K_2^F\left(1-\frac{d_i}{b_i}\right)\right), \quad i \in \{2,3\},  \\[3mm]
    \gamma_{1i}^U := G_{1i}^U\left(K_1^U\left(1-\frac{d_1}{b_1}\right)\right) - G_{1i}^U\left(-K_2^U\left(1-\frac{d_i}{b_i}\right)\right).
    \end{array}
    \right.
\end{equation}
The following Lemma gathers some obvious properties on these coefficients under the  natural assumption \eqref{hypbidi}~:
\begin{lemma}
    Let us assume that \eqref{hypbidi} holds, in particular $b_2>b_3$ and $d_2<d_3$. Then we have 
    $$
    g_{12}^F(\omega) < g_{13}^F(\omega) \text{ for } \omega \in (-K_2^F,0),
    \quad g_{12}^U(\omega) < g_{13}^U(\omega) \text{ for } \omega \in (-K_2^U,0),
    $$
    and
    $$
    \gamma_{12}^F < \gamma_{13}^F, \quad \gamma_{12}^U < \gamma_{13}^U.
    $$
\end{lemma}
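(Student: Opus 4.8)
\emph{Proof plan.} Both assertions reduce to elementary algebra once one notices that in \eqref{eq:gpomega} the dependence on $p$ enters only through the factor multiplying $\omega_-$, so that $g(x,p,\omega)$ does not depend on $p$ when $\omega\ge 0$.

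\textbf{Pointwise inequalities.} The plan is to substitute $p=0$ and $p=1$ into \eqref{eq:gpomega}. For $\omega<0$ one has $\omega_+=0$ and $\omega_-=-\omega>0$, so on $\{x<0\}$, where $K_2(x)=K_2^F$,
$$g_{13}^F(\omega)-g_{12}^F(\omega)=\omega_-\Bigl[(b_2-b_3)\Bigl(1-\frac{\omega_-}{K_2^F}\Bigr)+(d_3-d_2)\Bigr].$$
For $\omega\in(-K_2^F,0)$ we have $\omega_->0$ and $0<1-\omega_-/K_2^F<1$, and since \eqref{hypbidi} gives $b_2>b_3$ and $d_3>d_2$, the bracket is strictly positive; hence $g_{12}^F<g_{13}^F$ on $(-K_2^F,0)$. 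Replacing $K_2^F$ by $K_2^U$ gives the statement for the superscript $U$.

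\textbf{Comparison of the constants.} The only point needing care is that $\gamma_{12}^F$ and $\gamma_{13}^F$ are integrals over \emph{different} intervals, with left endpoints $-K_2^F(1-d_2/b_2)$ and $-K_2^F(1-d_3/b_3)$. I would first observe that $g_{12}^F=g_{13}^F$ on $[0,+\infty)$ (the $p$-dependent part is proportional to $\omega_-$, hence vanishes for $\omega\ge 0$), so $G_{12}^F$ and $G_{13}^F$ coincide at the common right endpoint $K_1^F(1-d_1/b_1)>0$, and therefore
$$\gamma_{13}^F-\gamma_{12}^F=\int_{-K_2^F(1-d_3/b_3)}^{0}g_{13}^F(\zeta)\,d\zeta-\int_{-K_2^F(1-d_2/b_2)}^{0}g_{12}^F(\zeta)\,d\zeta.$$
Next, for $\zeta<0$ one has $g_{1i}^F(\zeta)=(b_i-d_i)\zeta+(b_i/K_2^F)\zeta^2$, a quadratic whose negative root is precisely $-K_2^F(1-d_i/b_i)$; a direct integration gives
$$\int_{-K_2^F(1-d_i/b_i)}^{0}g_{1i}^F(\zeta)\,d\zeta=-\frac{b_i(K_2^F)^2}{6}\Bigl(1-\frac{d_i}{b_i}\Bigr)^3,\qquad i\in\{2,3\},$$
whence
$$\gamma_{13}^F-\gamma_{12}^F=\frac{(K_2^F)^2}{6}\Bigl[b_2\Bigl(1-\frac{d_2}{b_2}\Bigr)^3-b_3\Bigl(1-\frac{d_3}{b_3}\Bigr)^3\Bigr].$$
This is positive since $b_2>b_3>0$ and, from $b_2>b_3$ and $d_2<d_3$, $d_2/b_2<d_3/b_3$, so $1-d_2/b_2>1-d_3/b_3>0$. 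The inequality $\gamma_{12}^U<\gamma_{13}^U$ follows identically with $K_2^F$ replaced by $K_2^U$.

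\textbf{Expected obstacle.} I do not anticipate a genuine difficulty: the argument is merely bookkeeping of the (distinct) integration endpoints together with the monotonicity $d_2/b_2<d_3/b_3$. If one prefers to avoid computing the primitive, the same conclusion follows by splitting $\int_{-K_2^F(1-d_2/b_2)}^{0}g_{12}^F$ at $-K_2^F(1-d_3/b_3)$: the contribution over $(-K_2^F(1-d_2/b_2),-K_2^F(1-d_3/b_3))$ is negative since $g_{12}^F<0$ there, while on $(-K_2^F(1-d_3/b_3),0)$ one has $g_{12}^F<g_{13}^F$ by the pointwise inequality just proved, and both comparisons push in the same direction.
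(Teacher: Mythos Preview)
Your argument is correct. The paper itself does not prove this lemma at all---it merely announces it as ``obvious properties''---so your computation is exactly the verification that is implicitly being left to the reader, and both the direct evaluation of the primitive and your alternative splitting argument at $-K_2^F(1-d_3/b_3)$ are valid ways to handle the distinct left endpoints.
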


We assume that the species $1$ is always invasive in the region $\{x<0\}$, from Remark \ref{rem:bistability} it means that we have $\gamma_{12}^F>0$ and $\gamma_{13}^F>0$. 
From the results in Theorem \ref{BB} and in Theorem \ref{TH2}, we make the following observations.
\begin{itemize}
    \item If $\gamma_{13}^U>\gamma_{12}^U > 0$, then the species $n_1$ is invasive also in the region $\{x>0\}$. Then, the steady state solution of \eqref{BB-ssamia1} is asymptotically stable, i.e. the species $n_1$ invades the entire domain.
    \item If $0>\gamma_{13}^U>\gamma_{12}^U $, then the species $n_2$ and $n_3$ are invasive in the region $\{x>0\}$. Thus, if we succeed in replacing the species $n_2$ by the species $n_3$ in this region, the following steady states is asymptotically stable
    \begin{align*}
    \left\lbrace
    \begin{array}{lcl}
    - \baromega'' = f(x,\baromega), \quad \text{ on } \IR, \\[2mm]
    \displaystyle\baromega(-\infty) = K_1^{F}\left(1 - \frac{d_1}{b_1}\right), \qquad 
    \displaystyle\baromega(+ \infty)  = K_2^{U}\left(1 - \frac{d_3}{b_3}\right).
    \end{array}
    \right. 
    \end{align*}
    That is the species $n_1$ invades the region $\{x<0\}$ whereas the species $n_3$ invades the region $\{x>0\}$.
    \item If $\gamma_{13}^U>0>\gamma_{12}^U$, this is the interesting case where $n_1$ "wins the competition against" species $n_3$ but "loses against" species $n_2$. Then, if we succeed in replacing the species $n_2$ by $n_3$, the species $n_1$ will be able to invades the entire domain and the steady state solution of \eqref{BB-ssamia1} will be asymptotically stable. 
    If the replacement does not occur, then the stable steady solution of \eqref{BB-ssamia} will be asymptotically stable.
\end{itemize}

\subsection{Numerical illustrations}\label{sec:Wolnum}
 In this part, we illustrate the above results by some numerical simulations.
 We use the same discretization based on a semi-implicit finite-difference scheme as in section \ref{sec:num} but for system \ref{systW}. The spacial domain is $[-40,40]$ and system \ref{systW} is complemented with Neumann boundary conditions. 
 The numerical values of the parameters are given in Table \ref{tablenum}.
\begin{table}[ht!]
\centering\begin{tabular}{ |c|c|c|c|c|c|c|c| } 
\hline
 $b_1$ & $b_2$ & $b_3$ & $d_1$ & $d_2$ & $d_3$ & $c$ & $D$ \\ 
 \hline
 $1.12$ & $1$ & $0.9$ & $0.27$ & $0.2$ & $0.24$ & $10$ & $0.5$ \\ 
 \hline
\end{tabular}    
\caption{Numerical values of the parameters used in the computations.}
\label{tablenum}
\end{table}

\begin{figure}[ht!]
\centering\includegraphics[width=0.7\linewidth,height=5.7cm]{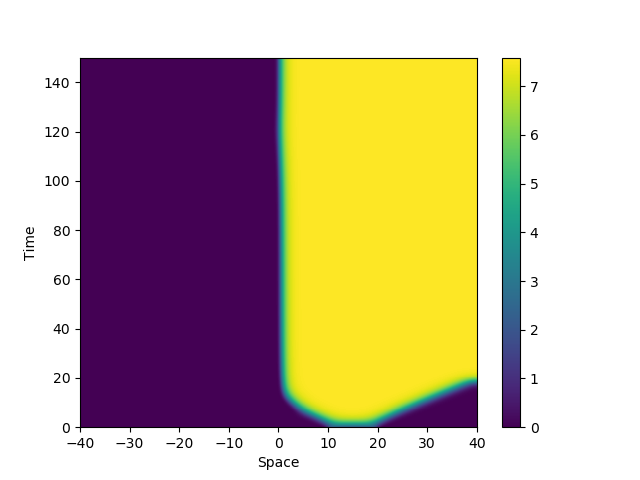}
\includegraphics[width=0.7\linewidth,height=5.7cm]{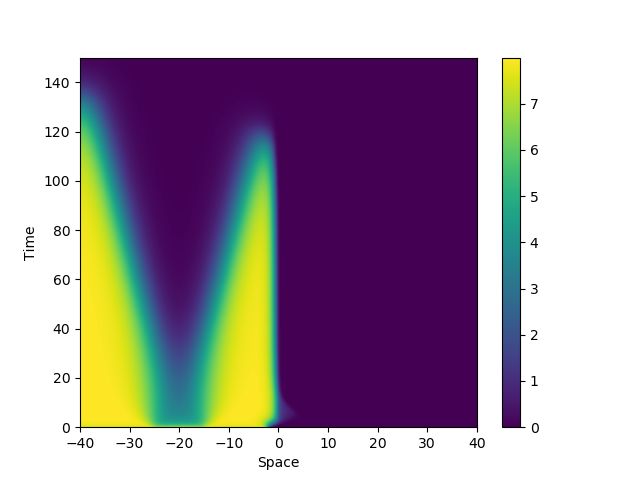}   
\includegraphics[width=0.7\linewidth,height=5.7cm]{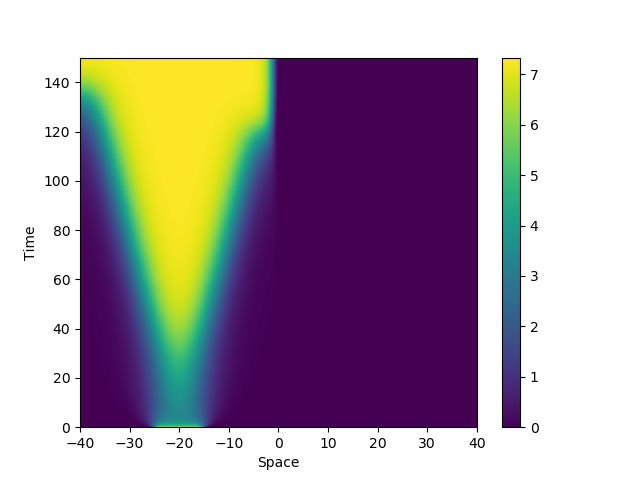}   
\caption{Time dynamics of functions $n_1$ (top), $n_2$ (middle), and $n_3$ (bottom), solution of system \eqref{systW}. In this case the Wolbachia-infected population $n_3$ replaces the wild population $n_2$ in the region $\{x<0\}$, while the population $n_1$ invades the domain $\{x>0\}$.}
\label{fig:Wolb1}
\end{figure}

\begin{figure}[ht!]
\centering\includegraphics[width=0.7\linewidth,height=5.7cm]{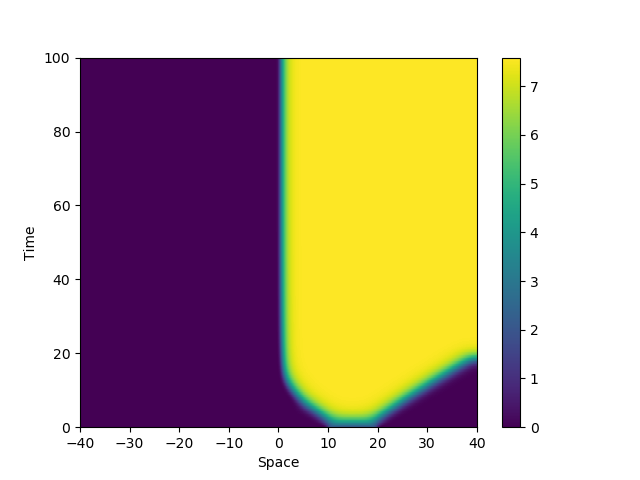}
\includegraphics[width=0.7\linewidth,height=5.7cm]{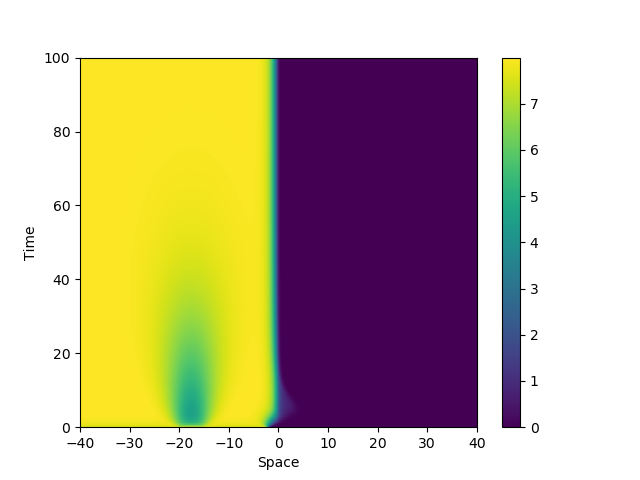}   
\includegraphics[width=0.7\linewidth,height=5.7cm]{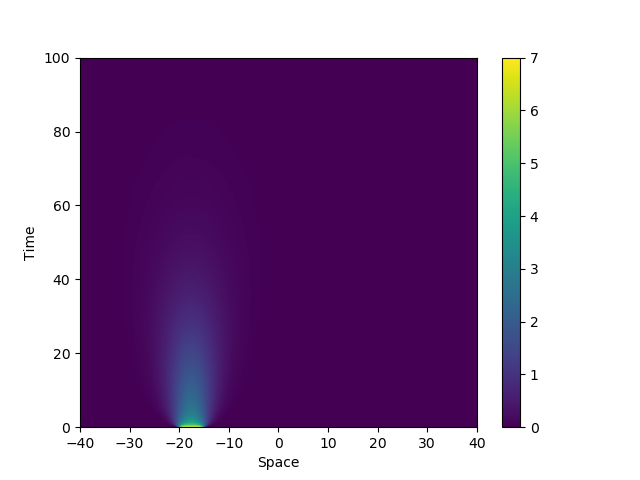}   
\caption{Time dynamics of functions $n_1$ (top), $n_2$ (middle), and $n_3$ (bottom) solution of system \eqref{systW}. In this case the initial condition for the Wolbachia-infected population $n_3$ is not large enough to succeed in replacing the wild population $n_2$ in the region $\{x<0\}$, while the population $n_1$ invades the domain $\{x>0\}$.}
\label{fig:Wolb2}
\end{figure}
 We display in Figures \ref{fig:Wolb1} and \ref{fig:Wolb2} the numerical results obtained for $K_1^F = 10$, $K_1^U = 1$, $K_2^F=1$, and $K_2^F = 10$. For such value, we may verify that $\gamma_{12}^F > 0$ and $0>\gamma_{13}^U>\gamma_{12}^U$. Hence the species $n_1$ is invasive in the region $\{x>0\}$ whereas the species $n_2$ and $n_3$ are invasive in the region $\{x<0\}$. In Figure \ref{fig:Wolb1} the initial data are given by 
 $$
 n_1^0(x) = 2. \,\mathbf{1}_{[10,20]}(x), \qquad
 n_2^0(x) = 7.5 \,\mathbf{1}_{x<0}(x), \qquad
 n_3^0(x) = 7. \,\mathbf{1}_{[-25,-15]}(x).
 $$
 We observe that the \textit{Wolbachia}-infected population $n_3$ replaces the wild population $n_3$ in the region $\{x<0\}$. In Figure \ref{fig:Wolb2}, the only change is in the choice of the initial data for the \textit{Wolbachia}-infected population~:
 $$
 n_3^0(x) = 7 .\,\mathbf{1}_{[-20,-15]}(x).
 $$
 We observe that this initial data is not large enough to guarantee the success of the replacement strategy, as illustrated for instance in the work \cite{Zubelli}. 

Then, we take the following numerical values $K_1^F = 10$, $K_1^U = 4$, $K_2^F=1$, and $K_2^F = 4.5$. In this case, we have $\gamma_{13}^U>0>\gamma_{12}^U$. We display in Figure \ref{fig:Wolb3} the numerical results obtained with the following initial data~:
$$
n_1^0(x) = 2. \,\mathbf{1}_{[10,20]}(x), \qquad
n_2^0(x) = 3.5 \,\mathbf{1}_{x<0}(x), \qquad
n_3^0(x) = 3. \,\mathbf{1}_{[-25,-20]}(x).
$$
With this choice of initial data, the initial value of $n_3^0$ is not large enough to guarantee the replacement of the wild population of mosquitoes $n_2$ by the \textit{Wolbachia}-infected population $n_3$. Then, the population $n_3$ goes to extinction and finally we are dealing with the interaction between population $n_1$ and $n_2$ as in the previous section. Each of these populations establish in its respective region. 

Then we consider another choice of initial condition for $n_3^0$ which is now given by
$$
n_3^0(x) = 3. \,\mathbf{1}_{[-25,-10]}(x).
$$
In this case, the initial data is large enough to guarantee the success of the population replacement. The numerical results are displayed in Figure \ref{fig:Wolb4}. We observe the replacement of the wild population $n_2$ in the region $\{x<0\}$ by the \textit{Wolbachia}-infected population $n_3$. However, once the replacement is done, the population $n_3$ since to be less competitive than the population $n_1$ in the region $\{x<0\}$. Then we observe the invasion of the population $n_1$ at the expense of population $n_3$.
\begin{figure}[h!]
\centering\includegraphics[width=0.7\linewidth,height=5.7cm]{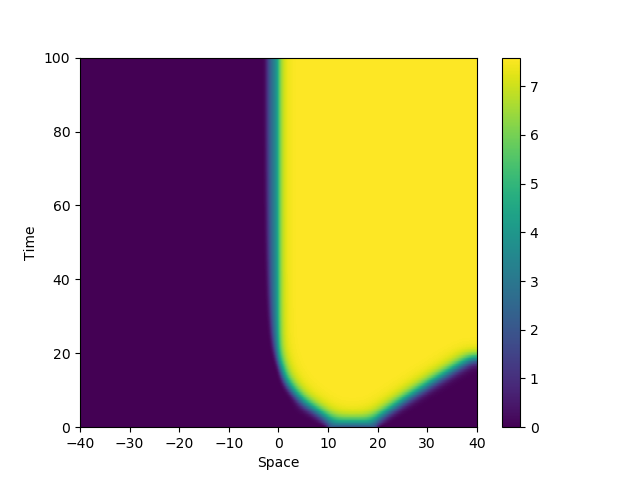}
\includegraphics[width=0.7\linewidth,height=5.7cm]{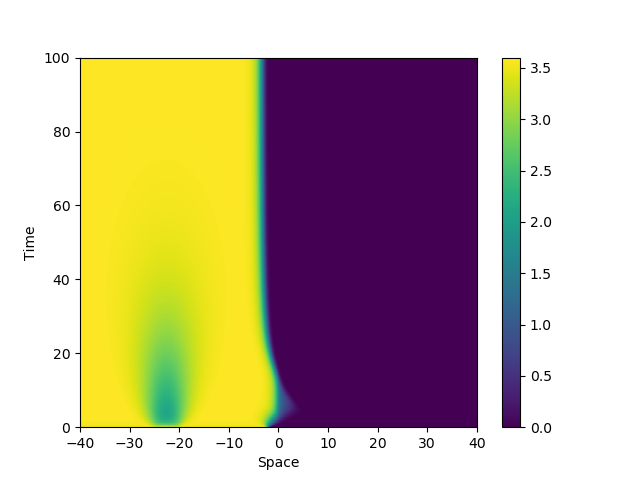}   
\includegraphics[width=0.7\linewidth,height=5.7cm]{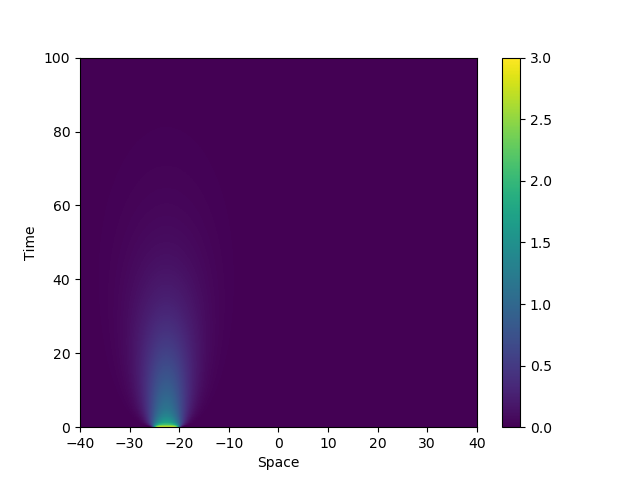}   
\caption{Time dynamics of functions $n_1$ (top), $n_2$ (middle), and $n_3$ (bottom) solution of system \eqref{systW} in the situation where $\gamma_{13}^U>0>\gamma_{12}^U$. In this case the initial condition for the Wolbachia-infected population $n_3$ is not large enough to succeed in replacing the wild population $n_2$ in the region $\{x<0\}$, while the population $n_1$ invades the domain $\{x>0\}$.}
\label{fig:Wolb3}
\end{figure}
\begin{figure}[h!]
\centering\includegraphics[width=0.7\linewidth,height=5.7cm]{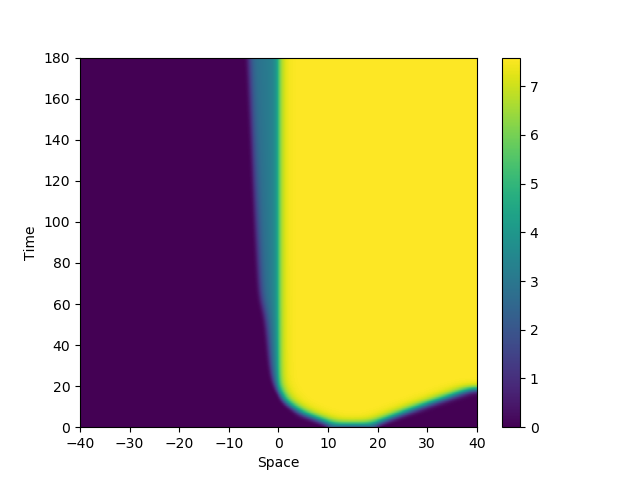}
\includegraphics[width=0.7\linewidth,height=5.7cm]{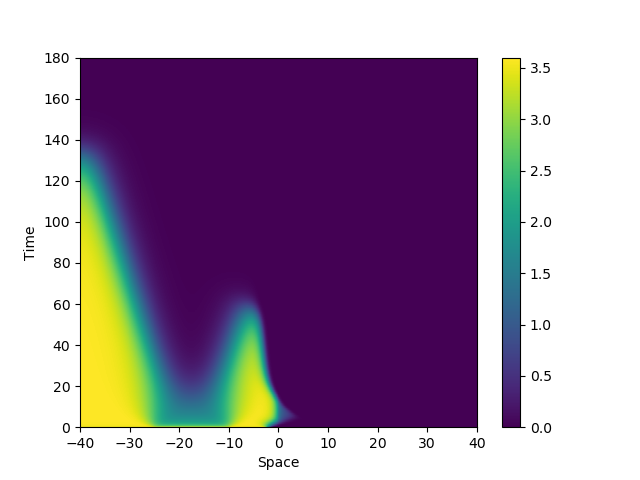}   
\includegraphics[width=0.7\linewidth,height=5.7cm]{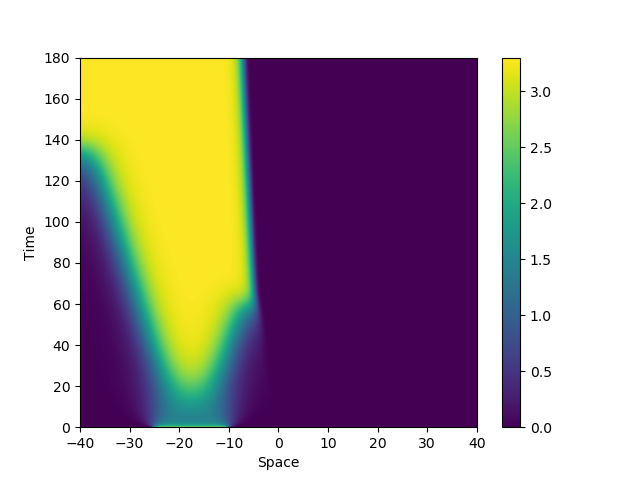}   
\caption{Time dynamics of functions $n_1$ (top), $n_2$ (middle), and $n_3$ (bottom) solution of system \eqref{systW} in the situation where $\gamma_{13}^U>0>\gamma_{12}^U$. In this case the initial condition for the Wolbachia-infected population $n_3$ is large enough to succeed in replacing the wild population $n_2$ in the region $\{x<0\}$. However after the replacement the population $n_1$ invades also the domain $\{x<0\}$.}
\label{fig:Wolb4}
\end{figure} 

\newpage
\textbf{Acknowledgements.}
This work has been initiated while N.V. was invited in the University of Tunis El Manar. N.V.
acknowledges warmly this institution for this opportunity and for the welcome.
N.V. acknowledges support from the STIC AmSud project BIO-CIVIP 23-STIC-02. \\
S.B. was invited in the Sorbonne University Paris North. S.B acknowledges warmly this institution for this opportunity and for the welcome, and the University of Tunis El Manar for the funding provided for my stays in France.


\end{document}